\documentclass[secthm,seceqn,amsthm,ussrhead,12pt]{amsart}
\usepackage[utf8]{inputenc}
\usepackage[english]{babel}

\usepackage{amssymb,amsmath,amsthm,amsfonts,xcolor,enumerate,hyperref,comment,longtable,cleveref}
\usepackage{soul}
\usepackage{times}
\usepackage{cite}
\usepackage{pdflscape}
\usepackage{ulem}
\usepackage[mathcal]{euscript}
\usepackage{tikz}
\usepackage{hyperref}
\usepackage{cancel}
\usepackage{stmaryrd}
\usepackage{marginnote}

\usetikzlibrary{arrows}

\sloppy
\newtheorem{theorem}{Theorem}
\newtheorem{lemma}[theorem]{Lemma}

\newtheorem{remark}[theorem]{Remark}

\newtheorem{definition}[theorem]{Definition}
\newtheorem{corollary}[theorem]{Corollary}

\newtheorem*{theoremA}{Theorem A}

\newtheorem*{theoremB}{Theorem B}

\usepackage{stmaryrd}
\usepackage{xcolor}

\setlength{\topmargin}{0mm}
\setlength{\textwidth}{170mm}  
\setlength{\textheight}{200mm}
\setlength{\evensidemargin}{0mm}
\setlength{\oddsidemargin}{0mm}

\begin{document}

 \medskip
 
   \medskip

 \medskip

\noindent{\Large
The algebraic and geometric classification of   Zinbiel  algebras}
 
 \hspace*{50mm} 
 
 \medskip
 
   \medskip

 \medskip 
 
   {\bf  
   Mar\'ia Alejandra Alvarez,
   Renato Fehlberg J\'{u}nior
  $\&$
    Ivan Kaygorodov \\

    \medskip
}

 \medskip

 \medskip
 
\ 

\noindent{\bf Abstract}:
{\it This paper is devoted to the complete algebraic and geometric classification of complex  $5$-dimensional Zinbiel  algebras.
In particular, 
we proved that
the variety of complex $5$-dimensional  Zinbiel algebras has dimension $24$, it is defined by $16$ irreducible components and it has $11$ rigid algebras.
 }
 
\medskip

 \medskip

 \medskip

 \medskip
 
\hspace*{50mm} 
  
\noindent {\bf Keywords}:
{\it nilpotent  algebra, Zinbiel algebra, dual Leibniz algebra, algebraic classification, central extension, geometric classification, degeneration.}

\medskip

 \hspace*{50mm} 
 
\noindent {\bf MSC2020}:  17A30,  14D06, 14L30.

 \medskip

 \medskip

 \medskip

  \hspace{100mm} 

\section*{Introduction}
The algebraic classification (up to isomorphism) of algebras of dimension $n$ from a certain variety
defined by a certain family of polynomial identities is a classic problem in the theory of non-associative algebras.
There are many results related to the algebraic classification of small-dimensional algebras in the varieties of
Jordan, Lie, Leibniz, Zinbiel and many other algebras \cite{             degr3,      degr2,        kkp20}.
 Geometric properties of a variety of algebras defined by a family of polynomial identities have been an object of study since 1970's (see, \cite{wolf2, wolf1, chouhy,   fkkv22,  alesl,  aleis, aleis2,   gabriel,      cibils,  shaf, GRH, GRH2, ale3,      kppv, S90}). 
 Gabriel described the irreducible components of the variety of $4$-dimensional unital associative algebras~\cite{gabriel}.  
 Cibils considered rigid associative algebras with $2$-step nilpotent radical \cite{cibils}.
 Grunewald and O'Halloran computed the degenerations for the variety of $5$-dimensional nilpotent Lie algebras~\cite{GRH}. 
 All irreducible components of  $2$-step nilpotent  commutative  and anticommutative  algebras have been described in \cite{shaf,ikp20}.
Chouhy  proved that  in the case of finite-dimensional associative algebras,
 the $N$-Koszul property is preserved under the degeneration relation~\cite{chouhy}.
Degenerations have also been used to study a level of complexity of an algebra~\cite{wolf1,wolf2,gorb93}.
 The study of degenerations of algebras is very rich and closely related to deformation theory, in the sense of Gerstenhaber \cite{ger63}.

\newpage 
Loday introduced a class of symmetric operads generated by one bilinear operation subject to one relation making each left-normed product of three elements equal to a linear combination of right-normed products:
    $(a_1a_2)a_3=\sum\limits_{\sigma\in \mathbb{S}_3} x_{\sigma} a_{\sigma(1)}(a_{\sigma(2)}a_{\sigma(3)});$
such an operad is called a parametrized one-relation operad. For a particular choice of parameters $\{x_{\sigma}\}$, this operad is said to be regular if each of its components is the regular representation of the symmetric group; equivalently, the corresponding free algebra on a vector space $V$ is, as a graded vector space, isomorphic to the tensor algebra of $V$. 
Bremner and Dotsenko classified, over an algebraically closed field of characteristic zero, all regular parametrized one-relation operads. In fact, they proved that each such operad is isomorphic to one of the following five operads: 
the left-nilpotent operad, the associative operad, the Leibniz operad, the Zinbiel operad, and the Poisson operad \cite{bredo}. 
An algebra $\bf A$ is called a (left) {\it Zinbiel algebra} if it satisfies the identity 
$(xy)z=x(yz+zy).$
	Zinbiel algebras were introduced by Loday in \cite{loday}.
Under the Koszul duality, the operad of Zinbiel algebras is dual to the operad of Leibniz algebras. 
Zinbiel algebras are also known as pre-commutative algebras \cite{ pasha}
and chronological algebras \cite{Kawski}.
A Zinbiel algebra is equivalent to a commutative dendriform algebra \cite{comdend}.
It plays an important role in the definition of pre-Gerstenhaber algebras
\cite{Aloulou}.
The variety of Zinbiel algebras is a proper subvariety in the variety of right commutative algebras.
Each  Zinbiel algebra with the commutator multiplication gives a Tortkara algebra \cite{dzhuma}, which has   sprung up in unexpected areas of mathematics \cite{tortnew1,tortnew2}.
Recently, the notion of matching Zinbiel algebras was introduced in \cite{matchzin}.
Recently, Zinbiel algebras also  appeared in a study of rack cohomology \cite{rack}, 
number theory \cite{Chapoton21} and 
in  a  construction of a Cartesian differential category  \cite{ip21}.
In recent years, there has been a strong interest in the study of Zinbiel algebras in the algebraic and the operad context  
\cite{ cam13,mukh, dok,  anau,34,  abp,matchzin, centr3zinb, dzhuma,    dzhuma5, dzhuma19, kppv, ualbay}. 

Free Zinbiel algebras were shown to
be precisely the shuffle product algebra \cite{34}.
Naurazbekova proved that, over a field of characteristic zero, free Zinbiel algebras are the free associative-commutative algebras (without unity) with respect to the symmetrization multiplication and their free generators are found; also she constructed examples of subalgebras of the two-generated free Zinbiel algebra that are free Zinbiel algebras of countable rank \cite{anau}.
Nilpotent algebras play an important role in the class of Zinbiel algebras.
So, 
Dzhumadildaev and  Tulenbaev proved that each complex finite dimensional Zinbiel algebra is nilpotent  \cite{dzhuma5};
Naurazbekova 
and Umirbaev proved that in characteristic zero any proper subvariety of the variety of Zinbiel algebras is nilpotent \cite{ualbay}.
Finite-dimensional Zinbiel algebras with a ``big'' nilpotency index are classified in \cite{adashev,cam13}.
Central extensions of three dimensional Zinbiel algebras were calculated in \cite{centr3zinb} and of filiform Zinbiel algebras in \cite{cam20}.
The full system of degenerations of complex four dimensional Zinbiel algebras is given in \cite{kppv}.

Our method for classifying nilpotent  Zinbiel  algebras is based on the calculation of central extensions of nilpotent algebras of smaller dimensions from the same variety. 
The algebraic study of central extensions of   algebras has been an important topic for years \cite{  klp20,hac16,  ss78}.
First, Skjelbred and Sund used central extensions of Lie algebras to obtain a classification of nilpotent Lie algebras  \cite{ss78}.
Note that the Skjelbred-Sund method of central extensions is an important tool in the classification of nilpotent algebras.
Using the same method,  
 small dimensional nilpotent 
(associative, 
 terminal  \cite{kkp20}, Jordan,
  Lie  \cite{degr3,degr2}, 
 anticommutative  algebras,
and some others) have been described. Our main results related to the algebraic classification of cited varieties are summarized below.

\begin{theoremA}
Up to isomorphism, there are infinitely many isomorphism classes of  
complex  $5$-dimensional non-split non-2-step nilpotent Zinbiel algebras, 
described explicitly  in  section \ref{secteoA} in terms of 
$6$ one-parameter families and 
$53$ additional isomorphism classes.

\end{theoremA}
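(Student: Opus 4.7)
The plan is to prove Theorem~A via the Skjelbred--Sund method of central extensions, following the template that has been used successfully for other varieties of nilpotent algebras cited in the introduction. By the theorem of Dzhumadildaev and Tulenbaev, every finite-dimensional complex Zinbiel algebra is nilpotent; moreover, any non-split such algebra has nontrivial annihilator and hence arises as a one-dimensional central extension of a nilpotent Zinbiel algebra of strictly smaller dimension. Thus every $5$-dimensional non-split Zinbiel algebra $\A$ is a one-dimensional central extension of some $4$-dimensional nilpotent Zinbiel algebra $\mathbf{B}$, and the classification reduces to describing such extensions up to isomorphism. The algebraic classification of nilpotent Zinbiel algebras in dimensions $\leq 4$ is already available in the literature cited above (in particular \cite{adashev,centr3zinb,kppv}), and this provides the input for our method.

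For each $4$-dimensional nilpotent Zinbiel algebra $\mathbf{B}$ on the known list, I would carry out the following standard sequence of steps. First, compute the space $Z^2(\mathbf{B},\mathbb{C})$ of Zinbiel $2$-cocycles, i.e.\ bilinear maps $\theta\colon \mathbf{B}\times \mathbf{B}\to \mathbb{C}$ satisfying
\[
\theta(xy,z)=\theta(x,yz)+\theta(x,zy),
\]
together with the subspace $B^2(\mathbf{B},\mathbb{C})$ of coboundaries, and form the quotient $H^2(\mathbf{B},\mathbb{C})$. Next, determine the automorphism group $\aut{(\mathbf{B})}$ and its induced action on $H^2(\mathbf{B},\mathbb{C})$. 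Then enumerate the $\aut{(\mathbf{B})}$-orbits on the locus of cohomology classes whose radical intersects the annihilator of $\mathbf{B}$ trivially; this is precisely the non-split condition, and each such orbit corresponds to one isomorphism class of $5$-dimensional non-split Zinbiel algebra extending $\mathbf{B}$. Finally, discard the extensions whose resulting nilpotency index is $2$, so as to match the statement of Theorem~A.

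The main technical obstacle is the orbit enumeration: $H^2(\mathbf{B},\mathbb{C})$ can be quite large, $\aut{(\mathbf{B})}$ is typically a non-reductive linear algebraic group, and several of the $4$-dimensional bases will produce continuous families of orbits parametrized by open subsets of $\mathbb{P}^1$, yielding the announced $6$ one-parameter families. The bookkeeping required to extract canonical representatives for the $53$ isolated classes, to identify the precise parameter identifications within each family, and to exclude duplicates coming from isomorphisms between $5$-dimensional algebras that arise from different $4$-dimensional bases is where most of the real work lies. To manage this, I would fix a normal form for the pair $(\mathbf{B},\theta)$ under the $\aut{(\mathbf{B})}$-action and then perform a final cross-comparison across the produced lists, distinguishing classes by invariants such as the dimensions of $\mathrm{Ann}(\A)$, of the center, of $\A^2$ and of the further terms of the lower central series, and by the isomorphism type of the associated graded algebra. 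After this cross-comparison the surviving representatives will be exactly those listed in Section~\ref{secteoA}, completing the proof of Theorem~A.
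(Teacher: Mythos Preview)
Your overall strategy is the same as the paper's: Skjelbred--Sund central extensions of the known $4$-dimensional Zinbiel algebras, followed by an orbit analysis under the automorphism group. However, there is a genuine gap in the way you set up the orbit problem.

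You write that one should ``enumerate the $\aut{(\mathbf{B})}$-orbits on the locus of cohomology classes whose radical intersects the annihilator of $\mathbf{B}$ trivially; this is precisely the non-split condition.'' This is not the non-split condition. For a $1$-dimensional extension $\mathbf{B}_\theta$, non-splitness is simply $[\theta]\neq 0$ in $H^2(\mathbf{B},\mathbb{C})$. The condition $\operatorname{Ann}(\theta)\cap\operatorname{Ann}(\mathbf{B})=0$ is strictly stronger: it is equivalent to $\operatorname{Ann}(\mathbf{B}_\theta)$ being exactly the $1$-dimensional extending space. If you impose it, you will produce precisely the $5$-dimensional non-split Zinbiel algebras with $1$-dimensional annihilator, and nothing else. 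Concretely, the $19$ algebras $[\mathfrak{N}_1^{\mathbb{C}}]^{2}_{i}$ and $[\mathfrak{N}_1]^{2}_{j}$ in the list (two of which are one-parameter families) all have $2$-dimensional annihilator; none of them can appear as $\mathbf{B}_\theta$ with $\mathbf{B}$ $4$-dimensional and $\operatorname{Ann}(\theta)\cap\operatorname{Ann}(\mathbf{B})=0$, so your procedure would miss them entirely, and no amount of cross-comparison at the end can recover algebras that were never produced.

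The paper handles this by stratifying on $\dim\operatorname{Ann}(\A)$: the $1$-dimensional annihilator case is exactly your computation (one-dimensional extensions of $4$-dimensional algebras with the trivial-intersection condition), while the $2$-dimensional annihilator case is obtained separately as \emph{two}-dimensional central extensions of the $3$-dimensional Zinbiel algebras, imported from \cite{centr3zinb}. To fix your plan, either add this second layer of the stratification explicitly, or drop the trivial-intersection condition and instead allow $[\theta]\neq 0$ for all $4$-dimensional $\mathbf{B}$ (including the split ones), accepting that the same $5$-dimensional algebra will then arise from several different $\mathbf{B}$'s and that your final cross-comparison must actually eliminate these redundancies.
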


 The degenerations between the (finite-dimensional) algebras from a certain variety $\mathfrak{V}$ defined by a set of identities have been actively studied in the past decade.
The description of all degenerations allows one to find the so-called rigid algebras and families of algebras, i.e. those whose orbit closures under the action of the general linear group form irreducible components of $\mathfrak{V}$
(with respect to the Zariski topology). 
We list here some works in which the rigid algebras of the varieties of
all $4$-dimensional Leibniz algebras,
all $4$-dimensional nilpotent terminal algebras \cite{kkp20},
all $4$-dimensional nilpotent commutative algebras,
all $6$-dimensional nilpotent binary Lie algebras,
all $6$-dimensional nilpotent anticommutative algebras 
have been found.
A full description of degenerations has been obtained  
for $2$-dimensional algebras, 
for $4$-dimensional Lie algebras,
for $4$-dimensional Zinbiel and  $4$-dimensional nilpotent Leibniz algebras in \cite{kppv},
for $6$-dimensional nilpotent Lie algebras in \cite{S90,GRH},  
for $8$-dimensional $2$-step nilpotent anticommutative algebras \cite{ale3},
and for $(n+1)$-dimensional $n$-Lie algebras.
Our main results related to the geometric classification of cited varieties are summarized below. 
\begin{theoremB}
The variety of complex $5$-dimensional   Zinbiel algebras has 
dimension $24$  and it has 
$16$ irreducible components (in particular, there are only $11$ rigid algebras in this variety). 

\end{theoremB}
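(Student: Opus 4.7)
The plan is to build on the complete algebraic classification furnished by Theorem A (together with the standard classification of the split and $2$-step nilpotent cases, which can be assembled from lower-dimensional results). Starting from that exhaustive list of isomorphism classes, I first compute, for every algebra $\mathbf{A}$ and every one-parameter family, the dimension of its $\mathrm{GL}_5(\CC)$-orbit via
\[
\dim \orb(\mathbf{A}) = 25 - \dim \mathrm{Der}(\mathbf{A}),
\]
where the derivation algebra is read off from the structure constants by solving a linear system. The maximum value over the list gives the candidate dimension $24$ for the variety, and identifies the pool from which the irreducible components are drawn.

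Next, I would assemble the candidate list of irreducible components: an algebra $\mathbf{A}$ is a candidate rigid component if $\overline{\orb(\mathbf{A})}$ is not contained in $\overline{\orb(\mathbf{B})}$ for any other $\mathbf{B}$ in the list, and a parametric family $\{\mathbf{A}(\alpha)\}$ is a candidate component if the closure of its union of orbits is likewise irreducible and maximal. Theorem~B predicts exactly $11$ rigid candidates and $5$ parametric families, for the claimed total of $16$. The upper bound on the number of components is obtained by exhibiting, for each remaining algebra in the list, an explicit one-parameter curve $g_t \in \mathrm{GL}_5(\CC(t))$ realising a degeneration into one of the $16$ candidates; combined with $\dim \orb$ accounting, this shows that the candidates cover the variety.

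The main obstacle is the lower bound, namely showing that the $16$ candidate closures are pairwise incomparable and that each rigid candidate really is rigid. For this I would deploy the standard battery of closed, $\mathrm{GL}$-invariant, upper-semicontinuous conditions: $\dim \mathrm{Der}$, $\dim \mathbf{A}^k$ for the lower central and derived series, $\dim \mathrm{Ann}(\mathbf{A})$, the dimensions of left/right annihilators, the rank of the multiplication tensor restricted to various subspaces, the existence of nonzero idempotents or specific nilpotent patterns, and the Burde--Steinhoff-type semi-invariants constructed from polynomial identities in the structure constants. For every ordered pair of candidates $(\mathbf{A}, \mathbf{B})$ with $\mathbf{A} \neq \mathbf{B}$, I need at least one invariant on which $\mathbf{A}$ and $\mathbf{B}$ take values violating the semicontinuity direction required for $\mathbf{A} \to \mathbf{B}$; if a single numerical invariant fails, I fall back on showing that a suitable orbit-closure defined by polynomial equations in the structure tensor contains $\mathbf{B}$ but not $\mathbf{A}$.

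Once every non-degeneration is certified and every required positive degeneration is written down, the conclusion assembles itself: the $16$ candidate closures are pairwise distinct irreducible closed $\mathrm{GL}$-invariant subsets whose union is the whole variety, so they are precisely its irreducible components; the rigid ones are the $11$ singletons among the candidates; and the dimension of the variety equals the maximum of the component dimensions, which the orbit-dimension computation identifies as $24$.
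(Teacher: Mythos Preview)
Your proposal is correct and follows essentially the same strategy as the paper: start from the algebraic classification, compute orbit dimensions via $25-\dim\mathfrak{Der}$, exhibit explicit parametrized-basis degenerations to reduce the list to $16$ candidates (eleven single algebras and five families), and then certify the non-degenerations using closed invariant conditions. The only point worth sharpening is your fallback for non-degenerations: the paper implements this precisely via the Grunewald--O'Halloran Borel-subgroup lemma, producing for each needed pair an explicit upper-triangular-stable closed set $\mathcal{R}$ (conditions such as $A_1^2\subseteq A_3$, $A_4A_1=0$, and polynomial relations among structure constants) that contains a representative of the source but admits no representative of the target---this is exactly the ``polynomial equations in the structure tensor'' device you allude to, and it, together with comparisons of $\dim A^2$ and annihilator dimension, suffices without recourse to Burde--Steinhoff-type semi-invariants.
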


\subsection{Symmetric Zinbiel algebras}
Similar to the Leibniz case, obvious that there are left and right Zinbiel algebras.
Hence, the notation of symmetric Zinbiel algebra should be introduced by the similar way 
with symmetric Leibniz algebras
(about symmetric Leibniz algebras see \cite{bsaid} and references therein).
	An algebra $\bf A$ is called a {\it symmetric Zinbiel algebra} if it satisfies the identities 
\begin{center}
    $(xy)z=x(yz+zy), \, x(yz)=(xy+yx)z.$	
\end{center}	
The operad of symmetric Zinbiel algebras is dual to the operad of symmetric Leibniz algebras.
The variety of symmetric Zinbiel algebras is a proper subvariety in the variety of bicommutative algebras.
The following Lemma can be obtained by some tedious calculations.

\begin{lemma}
Let $\mathcal S$ be a symmetric Zinbiel algebra.
Then $\mathcal S$ is a $3$-step nilpotent algebra 
and it satisfies the following two identities
\begin{center}
$(xy)z=-y(zx)$ and $(xy)z=-z(yx).$ 
\end{center}
\end{lemma}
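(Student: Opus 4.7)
\medskip

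\noindent My plan is to first extract the two skew identities, then read an operator identity off the symmetric Zinbiel relation and iterate it to obtain the key cancellation, and finally observe that every bracketing of a length-four product collapses to a normal form whose vanishing follows from that cancellation.

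For the two identities, substituting the second defining relation $x(yz)=(xy)z+(yx)z$ into the left Zinbiel identity $(xy)z=x(yz)+x(zy)$ instantly forces $x(zy)=-(yx)z$, which after relabeling is the first identity $(xy)z=-y(zx)$. Independently, rewriting the second defining relation as $u(vw)=(uv+vu)w$ shows that the right-hand side is symmetric in $u$ and $v$, so $u(vw)=v(uw)$; that is, all left multiplications $L_{u}$ commute pairwise. Combining these two facts gives $(xy)z=-y(zx)=-L_{y}L_{z}(x)=-L_{z}L_{y}(x)=-z(yx)$, which is the second required identity.

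For $3$-step nilpotency I will work at the operator level: the relation $u(vw)=(uv+vu)w$ is precisely the operator identity $L_{u}L_{v}=L_{uv+vu}$. Iterating,
\[
L_{u}L_{v}L_{w} \;=\; L_{u}L_{vw+wv} \;=\; L_{\,u(vw+wv)\,+\,(vw+wv)u\,}.
\]
The decisive step is that the inner argument vanishes: expanding $(vw+wv)u$ by means of the first derived identity $(xy)z=-y(zx)$ gives $-w(uv)-v(uw)$, and left commutativity rewrites this as $-u(wv)-u(vw)=-u(vw+wv)$. Therefore $L_{u}L_{v}L_{w}=L_{0}=0$, i.e.\ $u(v(wt))=0$ for every $u,v,w,t\in\mathcal{S}$.

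It then remains to reduce the other four bracketings of a length-four product to this form using the skew identities. Indeed $((ab)c)d=-c(d(ab))$ and $a((bc)d)=-a(c(db))$ are already of the shape $\pm L_{\alpha}L_{\beta}L_{\gamma}(\delta)$, while $(ab)(cd)=-b((cd)a)=b(d(ac))$ and $(a(bc))d=-(bc)(da)=-c(a(bd))$ become so after one extra application of $(xy)z=-y(zx)$. Each therefore vanishes, establishing $\mathcal{S}\cdot\mathcal{S}\cdot\mathcal{S}\cdot\mathcal{S}=0$. I expect the main obstacle to be spotting the operator identity $L_{u}L_{v}=L_{uv+vu}$ together with the antisymmetric cancellation it produces after one iteration; the rest is routine bookkeeping with the two skew identities.
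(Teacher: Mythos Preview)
Your proof is correct. The paper does not actually supply a proof for this lemma; it merely remarks that the result ``can be obtained by some tedious calculations.'' Your argument is a clean and efficient realisation of that computation: extracting the operator identity $L_{u}L_{v}=L_{uv+vu}$ from the right Zinbiel relation immediately yields the commutativity of left multiplications, which both gives the second skew identity from the first and, after one iteration, the vanishing $L_{u}L_{v}L_{w}=0$. The final reduction of the five length-four bracketings to this form via $(xy)z=-y(zx)$ is straightforward and covers all cases, so $\mathcal{S}^{4}=0$ as required.
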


\begin{corollary}
Each $n$-dimensional ($n<6$) symmetric Zinbiel algebra is $2$-step nilpotent.
There is a non-$2$-step nilpotent $6$-dimensional symmetric Zinbiel algebra.
\end{corollary}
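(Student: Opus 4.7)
The approach is to combine the ternary identities from the Lemma with the two symmetric Zinbiel axioms to extract scalar constraints on iterated products, and then bound the dimensions of the graded pieces $\mathcal{S}/\mathcal{S}^2$, $\mathcal{S}^2/\mathcal{S}^3$, $\mathcal{S}^3$. Set $d_i$ for the dimension of the $i$-th quotient; since $\mathcal{S}^4=0$ by the Lemma, $\dim\mathcal{S}=d_1+d_2+d_3$. Under the hypothesis $\mathcal{S}^3\neq 0$ we automatically get $d_3\geq 1$ and $d_2\geq 1$ (the alternative $\mathcal{S}^2=\mathcal{S}^3$ gives $\mathcal{S}^3\subseteq \mathcal{S}\cdot\mathcal{S}^3+\mathcal{S}^3\cdot\mathcal{S}=0$), so the goal is to rule out $d_1+d_2+d_3\leq 5$.

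First I would extract the ternary identity
\[
(ab)c+(bc)a+(cb)a=0 \qquad (\ast)
\]
by substituting the second axiom $a(bc)=(ab+ba)c$ (and its variant $a(cb)=(ac+ca)b$) into the Zinbiel expansion $(ab)c=a(bc)+a(cb)$. Applying $(\ast)$ to the triples $(a,a,a)$, $(a,a,b)$, $(a,b,a)$, $(b,a,a)$ yields, in characteristic zero, $(aa)a=0$, $(aa)b=(ab)a=:P_{a,b}$, and $(ba)a=-2P_{a,b}$ for all $a,b\in\mathcal{S}$. In particular $d_1\geq 2$, since a one-generator algebra has $\mathcal{S}^3\subseteq k\cdot(aa)a=0$.

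The crucial step is a two-generator bound: every $2$-generated symmetric Zinbiel algebra with $\mathcal{S}^3\neq 0$ has dimension at least $6$. Fix generators $x,y$, set $P=(xx)y$ and $Q=(yy)x$; after possibly swapping we may assume $P\neq 0$. I claim $\overline{xx},\overline{xy},\overline{yx}$ are linearly independent in $\mathcal{S}^2/\mathcal{S}^3$. Given $\alpha\,xx+\beta\,xy+\gamma\,yx\in\mathcal{S}^3$, right-multiplication by $x$ annihilates $\mathcal{S}^3$ and yields $(\beta-2\gamma)P=0$; left-multiplication by $x$ combined with $x(xy)=2P$ and $x(yx)=-P$ yields $(2\beta-\gamma)P=0$; together with $P\neq 0$ these force $\beta=\gamma=0$, and then right-multiplying $\alpha xx$ by $y$ gives $\alpha P=0$, so $\alpha=0$. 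Hence $d_2\geq 3$ and $\dim\mathcal{S}\geq 2+3+1=6$.

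For general $\mathcal{S}$ I split on the $2$-generated subalgebras. If some pair generates a $3$-step subalgebra, the preceding bound already gives $\dim\mathcal{S}\geq 6$. Otherwise every two-generated subalgebra is $2$-step, forcing $(aa)b=(ab)a=(ba)a=0$ for all $a,b\in\mathcal{S}$, so $\mathcal{S}^3$ is spanned by triples $(x_ix_j)x_k$ with $|\{i,j,k\}|=3$; using $(\ast)$ pairwise one then shows $(x_ix_j)x_k=(x_ix_k)x_j$, so the six orderings of three distinct generators collapse to three elements $A,B,C\in\mathcal{S}^3$ with $A+B+C=0$, and the right-multiplication map $\mathcal{S}^2/\mathcal{S}^3\to(\mathcal{S}^3)^{d_1}$ has image of dimension at least $3$ (each coordinate slot is covered by some $\overline{x_ix_j}$ as soon as $A,B,C$ are not all zero), giving $\dim\mathcal{S}\geq d_1+3+1\geq 7$. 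The remaining case $d_1\geq 4$ is closed by the trivial bound $\dim\mathcal{S}\geq d_1+d_2+d_3\geq 6$. The hard part throughout is the two-generator estimate, since $xx,xy,yx$ are tightly coupled by the axioms and the independence proof hinges on the exact scalars in $(xx)y=(xy)x$, $x(xy)=2(xx)y$, and $x(yx)=(xy)x+(yx)x$. For the existence part I would take the algebra predicted by the equality case: on the basis $\{x,y,xx,xy,yx,P\}$ set $(xx)y=(xy)x=P$, $(yx)x=-2P$, $x(xy)=2P$, $x(yx)=y(xx)=-P$, $yy=0$, and all other products zero; the axioms then reduce to the universal identities derived above and are verified by a routine case check over basis triples.
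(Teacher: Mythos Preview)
Your approach is entirely different from the paper's, and in an interesting way: the paper proves the first assertion simply by invoking Theorem~A (the complete classification of $5$-dimensional non-$2$-step Zinbiel algebras) and observing that none of the listed algebras satisfies the second Zinbiel identity. Your argument is self-contained and classification-free, resting on the two-generator bound $\dim\langle x,y\rangle\ge 6$ whenever $\langle x,y\rangle^3\ne 0$. That bound is the real content of your proof, and it is correct: the independence of $\overline{xx},\overline{xy},\overline{yx}$ follows exactly as you say from the scalars $(xy)x=(xx)y=P$, $(yx)x=-2P$, $x(xy)=2P$, $x(yx)=-P$. Your $6$-dimensional example is not the paper's, but it checks out (both axioms reduce to the eight triples on $\{x,y\}$, and each is a consequence of the identities you already derived).

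There is, however, a genuine error in your treatment of the branch ``every two-generated subalgebra is $2$-step''. Your claimed relation $(x_ix_j)x_k=(x_ix_k)x_j$ is wrong; polarizing $(ba)a=0$ gives $(x_ix_j)x_k=-(x_ix_k)x_j$, and together with the polarizations of $(aa)b=0$ and $(ab)a=0$ you obtain that $(a,b,c)\mapsto(ab)c$ is \emph{fully alternating}. Feeding this into $(\ast)$ gives $(ab)c+(ab)c-(ab)c=(ab)c=0$, and then $a(bc)=(ab)c+(ba)c=0$ as well, so $\mathcal S^3=0$: this branch is vacuous, and your collapse to ``three elements $A,B,C$ with $A+B+C=0$'' and the subsequent $\dim\mathcal S\ge 7$ estimate are both incorrect (and unnecessary). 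Once you replace that paragraph by the one-line observation that the case cannot occur, your proof is complete and gives an elementary alternative to the paper's classification-based argument; the separate ``$d_1\ge 4$'' case is then also redundant.
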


\begin{proof}
The first part of the present statement follows from Theorem A, 
because there are no symmetric algebras in the classification of Theorem A.
The required $6$-dimensional symmetric Zinbiel algebra is given below:
\begin{longtable}{lllll}
$e_1 e_2=e_3$&  $e_2e_1=e_4$& $e_2e_2=e_5$ & $e_1e_5=e_6$ &
$e_5e_1=-e_6$ \\
$e_2e_4=-2 e_6$ & $e_4e_2=-e_6 $& $e_2e_3=e_6$& 
$e_3e_2=2e_6$
\end{longtable}

\end{proof}

\section{The algebraic classification of Zinbiel  algebras}
\subsection{Method of classification of nilpotent algebras}
Throughout this paper, we use the notations and methods well written in \cite{klp20,  hac16},
which we have adapted for the Zinbiel case with some modifications.
Further in this section we give some important definitions.

Let $({\bf A}, \cdot)$ be a Zinbiel algebra over  $\mathbb{C}$  and let $\mathbb V$ be a vector space over $\mathbb{C}$. Then the $\mathbb{C}$-linear space ${\rm Z}^2\left(
\bf A,\mathbb V \right) $ is defined as the set of all  bilinear maps $%
\theta :{\bf A} \times {\bf A} \longrightarrow {\mathbb V},$
such that \[\theta(xy,z)=\theta(x,yz+zy).\]

These elements will be called {\it cocycles}. For a
linear map $f$ from $\bf A$ to  $\mathbb V$, if we define $\delta f\colon {\bf A} \times
{\bf A} \longrightarrow {\mathbb V}$ by $\delta f  (x,y ) =f(xy )$, then $\delta f\in {\rm Z^{2}}\left( {\bf A},{\mathbb V} \right) $. We define ${\rm B^{2}}\left({\bf A},{\mathbb V}\right) =\left\{ \theta =\delta f\ : f\in {\rm Hom}\left( {\bf A},{\mathbb V}\right) \right\} $.
We define the {\it second cohomology space} ${\rm H^{2}}\left( {\bf A},{\mathbb V}\right) $ as the quotient space ${\rm Z^{2}}
\left( {\bf A},{\mathbb V}\right) \big/{\rm B^{2}}\left( {\bf A},{\mathbb V}\right) $.

Let $\operatorname{Aut}({\bf A}) $ be the automorphism group of  ${\bf A} $ and let $\phi \in \operatorname{Aut}({\bf A})$. For $\theta \in
{\rm Z^{2}}\left( {\bf A},{\mathbb V}\right) $ define  the action of the group $\operatorname{Aut}({\bf A}) $ on ${\rm Z^{2}}\left( {\bf A},{\mathbb V}\right) $ by $\phi \theta (x,y)
=\theta \left( \phi \left( x\right) ,\phi \left( y\right) \right) $.  It is easy to verify that
 ${\rm B^{2}}\left( {\bf A},{\mathbb V}\right) $ is invariant under the action of $\operatorname{Aut}({\bf A}).$  
 So, we have an induced action of  $\operatorname{Aut}({\bf A})$  on ${\rm H^{2}}\left( {\bf A},{\mathbb V}\right)$.

Let $\bf A$ be a  Zinbiel  algebra of dimension $m$ over  $\mathbb C$ and ${\mathbb V}$ be a $\mathbb C$-vector
space of dimension $k$. For the bilinear map $\theta$, define on the linear space ${\bf A}_{\theta } = {\bf A}\oplus {\mathbb V}$ the
bilinear product `` $\left[ -,-\right] _{{\bf A}_{\theta }}$'' by $\left[ x+x^{\prime },y+y^{\prime }\right] _{{\bf A}_{\theta }}=
 xy +\theta(x,y) $ for all $x,y\in {\bf A},x^{\prime },y^{\prime }\in {\mathbb V}$.
The algebra ${\bf A}_{\theta }$ is called a $k$-{\it dimensional central extension} of ${\bf A}$ by ${\mathbb V}$. One can easily check that ${\bf A_{\theta}}$ is a  Zinbiel  
algebra if and only if $\theta \in {\rm Z^2}({\bf A}, {\mathbb V})$.

Call the
set $\operatorname{Ann}(\theta)=\left\{ x\in {\bf A}:\theta \left( x, {\bf A} \right)+ \theta \left({\bf A} ,x\right) =0\right\} $
the {\it annihilator} of $\theta $. We recall that the {\it annihilator} of an  algebra ${\bf A}$ is defined as
the ideal $\operatorname{Ann}(  {\bf A} ) =\left\{ x\in {\bf A}:  x{\bf A}+ {\bf A}x =0\right\}$. Observe
 that
$\operatorname{Ann}\left( {\bf A}_{\theta }\right) =(\operatorname{Ann}(\theta) \cap\operatorname{Ann}({\bf A}))
 \oplus {\mathbb V}$.

\

The following result shows that every algebra with a nonzero annihilator is a central extension of a smaller-dimensional algebra.

\begin{lemma}
Let ${\bf A}$ be an $n$-dimensional  Zinbiel  algebra such that $\dim (\operatorname{Ann}({\bf A}))=m\neq0$. Then there exists, up to isomorphism, an unique $(n-m)$-dimensional  Zinbiel   algebra ${\bf A}'$ and a bilinear map $\theta \in {\rm Z^2}({\bf A'}, {\mathbb V})$ with $\operatorname{Ann}({\bf A'})\cap\operatorname{Ann}(\theta)=0$, where $\mathbb V$ is a vector space of dimension m, such that ${\bf A} \cong {{\bf A}'}_{\theta}$ and
 ${\bf A}/\operatorname{Ann}({\bf A})\cong {\bf A}'$.
\end{lemma}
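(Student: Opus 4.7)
The plan is to mimic the classical Skjelbred--Sund construction, adapting the cocycle condition to the asymmetric Zinbiel identity. First I would set ${\bf A}':={\bf A}/\operatorname{Ann}({\bf A})$, which inherits a Zinbiel structure of dimension $n-m$ because $\operatorname{Ann}({\bf A})$ is a two-sided ideal, and identify ${\mathbb V}$ with $\operatorname{Ann}({\bf A})$ itself. Fixing any linear section $s\colon {\bf A}' \to {\bf A}$ of the canonical projection $\pi$, so that ${\bf A} = s({\bf A}') \oplus {\mathbb V}$ as vector spaces, I would define a candidate cocycle by $\theta(x,y) := s(x)s(y) - s(xy)$; this takes values in $\ker\pi = {\mathbb V}$ because $\pi$ is a homomorphism.

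The second step is to check that $\theta \in {\rm Z^{2}}({\bf A}',{\mathbb V})$, i.e. $\theta(xy,z) = \theta(x,yz+zy)$. This reduces to a direct expansion of $(s(x)s(y))s(z) = s(x)(s(y)s(z)+s(z)s(y))$ in ${\bf A}$, using the Zinbiel identity together with the fact that elements of ${\mathbb V}=\operatorname{Ann}({\bf A})$ are killed by multiplication on either side, so that values of $\theta$ inserted into further products drop out. I would then display the explicit isomorphism $\Phi\colon {\bf A} \to {{\bf A}'}_{\theta}$ given by $a \mapsto \pi(a) + (a - s(\pi(a)))$ and verify it is a bijective Zinbiel homomorphism, the point being that the defect $\theta$ precisely compensates for $s$ failing to be multiplicative.

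For the non-degeneracy condition $\operatorname{Ann}({\bf A}')\cap\operatorname{Ann}(\theta)=0$, I would take $\bar x$ in the intersection: since $\bar x \in \operatorname{Ann}({\bf A}')$ one has $s(\bar x)s(y), s(y)s(\bar x) \in {\mathbb V}$ for every $y$, and normalizing $s(0)=0$ together with $\bar x \in \operatorname{Ann}(\theta)$ forces both products to actually vanish; combined with the fact that ${\mathbb V}$ already annihilates all of ${\bf A}$, this shows $s(\bar x) \in \operatorname{Ann}({\bf A})\cap s({\bf A}') = 0$, so $\bar x=0$. Uniqueness up to isomorphism is then immediate: for any other valid pair $({\bf B},\theta')$, the formula $\operatorname{Ann}({\bf B}_{\theta'}) = (\operatorname{Ann}(\theta')\cap\operatorname{Ann}({\bf B}))\oplus{\mathbb V}$ recorded just above the statement collapses to ${\mathbb V}$, giving ${\bf B}\cong {\bf B}_{\theta'}/\operatorname{Ann}({\bf B}_{\theta'}) \cong {\bf A}/\operatorname{Ann}({\bf A}) \cong {\bf A}'$.

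The main obstacle I expect is the cocycle verification: unlike in the Lie or associative settings, the Zinbiel cocycle identity is asymmetric in its three arguments, so the expansion must be performed with care about the order of the operations and about which side of a product absorbs the annihilator. Once this bookkeeping is pinned down, non-degeneracy, the explicit isomorphism with ${{\bf A}'}_{\theta}$, and uniqueness are essentially built into the construction.
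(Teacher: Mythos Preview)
Your proposal is correct and is essentially the paper's argument rephrased in quotient--section language: the paper chooses a linear complement ${\bf A}'\subset{\bf A}$ of $\operatorname{Ann}({\bf A})$ with projection $P$ and sets $\theta(x,y)=xy-P(xy)$, which is exactly your $s(x)s(y)-s(xy)$ once one identifies the complement with the image of a section $s$ of $\pi$. Your write-up is in fact more explicit than the paper's, which states that $\theta\in{\rm Z^2}$ and $\operatorname{Ann}({\bf A}')\cap\operatorname{Ann}(\theta)=0$ without carrying out the verifications you outline.
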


\begin{proof}
Let ${\bf A}'$ be a linear complement of $\operatorname{Ann}({\bf A})$ in ${\bf A}$. Define a linear map $P \colon {\bf A} \longrightarrow {\bf A}'$ by $P(x+v)=x$ for $x\in {\bf A}'$ and $v\in\operatorname{Ann}({\bf A})$, and define a multiplication on ${\bf A}'$ by $[x, y]_{{\bf A}'}=P(x y)$ for $x, y \in {\bf A}'$.
For $x, y \in {\bf A}$, we have
\[P(xy)=P((x-P(x)+P(x))(y- P(y)+P(y)))=P(P(x) P(y))=[P(x), P(y)]_{{\bf A}'}. \]

Since $P$ is a homomorphism, $P({\bf A})={\bf A}'$ is a  Zinbiel  algebra and
 ${\bf A}/\operatorname{Ann}({\bf A})\cong {\bf A}'$, which gives us the uniqueness. Now, define the map $\theta \colon {\bf A}' \times {\bf A}' \longrightarrow\operatorname{Ann}({\bf A})$ by $\theta(x,y)=xy- [x,y]_{{\bf A}'}$.
  Thus, ${\bf A}'_{\theta}$ is ${\bf A}$ and therefore $\theta \in {\rm Z^2}({\bf A'}, {\mathbb V})$ and $\operatorname{Ann}({\bf A'})\cap\operatorname{Ann}(\theta)=0$.
\end{proof}

\begin{definition}
Let ${\bf A}$ be an algebra and $I$ be a subspace of $\operatorname{Ann}({\bf A})$. If ${\bf A}={\bf A}_0 \oplus I$
then $I$ is called an {\it annihilator component} of ${\bf A}$.
A central extension of an algebra $\bf A$ without annihilator component is called a {\it non-split central extension}.
\end{definition}

Our task is to find all central extensions of an algebra $\bf A$ by
a space ${\mathbb V}$.  In order to solve the isomorphism problem we need to study the
action of $\operatorname{Aut}({\bf A})$ on ${\rm H^{2}}\left( {\bf A},{\mathbb V}
\right) $. To do that, let us fix a basis $\{e_{1},\ldots ,e_{s}\}$ of ${\mathbb V}$, and $
\theta \in {\rm Z^{2}}\left( {\bf A},{\mathbb V}\right) $. Then $\theta $ can be uniquely
written as $\theta \left( x,y\right) =
\displaystyle \sum_{i=1}^{s} \theta _{i}\left( x,y\right) e_{i}$, where $\theta _{i}\in
{\rm Z^{2}}\left( {\bf A},\mathbb C\right) $. Moreover, $\operatorname{Ann}(\theta)=\operatorname{Ann}(\theta _{1})\cap\operatorname{Ann}(\theta _{2})\cap\ldots \cap\operatorname{Ann}(\theta _{s})$. Furthermore, $\theta \in
{\rm B^{2}}\left( {\bf A},{\mathbb V}\right) $\ if and only if all $\theta _{i}\in {\rm B^{2}}\left( {\bf A},
\mathbb C\right) $.
It is not difficult to prove (see \cite[Lemma 13]{hac16}) that given a  Zinbiel  algebra ${\bf A}_{\theta}$, if we write as
above $\theta \left( x,y\right) = \displaystyle \sum_{i=1}^{s} \theta_{i}\left( x,y\right) e_{i}\in {\rm Z^{2}}\left( {\bf A},{\mathbb V}\right) $ and 
$\operatorname{Ann}(\theta)\cap \operatorname{Ann}\left( {\bf A}\right) =0$, then ${\bf A}_{\theta }$ has an
annihilator component if and only if $\left[ \theta _{1}\right] ,\left[
\theta _{2}\right] ,\ldots ,\left[ \theta _{s}\right] $ are linearly
dependent in ${\rm H^{2}}\left( {\bf A},\mathbb C\right) $.

\;

Let ${\mathbb V}$ be a finite-dimensional vector space over $\mathbb C$. The {\it Grassmannian} $G_{k}\left( {\mathbb V}\right) $ is the set of all $k$-dimensional
linear subspaces of $ {\mathbb V}$. Let $G_{s}\left( {\rm H^{2}}\left( {\bf A},\mathbb C\right) \right) $ be the Grassmannian of subspaces of dimension $s$ in
${\rm H^{2}}\left( {\bf A},\mathbb C\right) $. There is a natural action of $\operatorname{Aut}({\bf A})$ on $G_{s}\left( {\rm H^{2}}\left( {\bf A},\mathbb C\right) \right) $.
Let $\phi \in \operatorname{Aut}({\bf A})$. For $W=\left\langle
\left[ \theta _{1}\right] ,\left[ \theta _{2}\right] ,\dots,\left[ \theta _{s}
\right] \right\rangle \in G_{s}\left( {\rm H^{2}}\left( {\bf A},\mathbb C
\right) \right) $ define $\phi W=\left\langle \left[ \phi \theta _{1}\right]
,\left[ \phi \theta _{2}\right] ,\dots,\left[ \phi \theta _{s}\right]
\right\rangle $. We denote the orbit of $W\in G_{s}\left(
{\rm H^{2}}\left( {\bf A},\mathbb C\right) \right) $ under the action of $\operatorname{Aut}({\bf A})$ by $\operatorname{Orb}(W)$. Given
\[
W_{1}=\left\langle \left[ \theta _{1}\right] ,\left[ \theta _{2}\right] ,\dots,
\left[ \theta _{s}\right] \right\rangle ,W_{2}=\left\langle \left[ \vartheta
_{1}\right] ,\left[ \vartheta _{2}\right] ,\dots,\left[ \vartheta _{s}\right]
\right\rangle \in G_{s}\left( {\rm H^{2}}\left( {\bf A},\mathbb C\right)
\right),
\]
we easily have that if $W_{1}=W_{2}$, then $ \bigcap\limits_{i=1}^{s}\operatorname{Ann}(\theta _{i})\cap \operatorname{Ann}\left( {\bf A}\right) = \bigcap\limits_{i=1}^{s}
\operatorname{Ann}(\vartheta _{i})\cap\operatorname{Ann}( {\bf A}) $, and therefore we can introduce
the set
\[
{\bf T}_{s}({\bf A}) =\left\{ W=\left\langle \left[ \theta _{1}\right] ,
\left[ \theta _{2}\right] ,\dots,\left[ \theta _{s}\right] \right\rangle \in
G_{s}\left( {\rm H^{2}}\left( {\bf A},\mathbb C\right) \right) : \bigcap\limits_{i=1}^{s}\operatorname{Ann}(\theta _{i})\cap\operatorname{Ann}({\bf A}) =0\right\},
\]
which is stable under the action of $\operatorname{Aut}({\bf A})$.

\

Now, let ${\mathbb V}$ be an $s$-dimensional linear space and let us denote by
${\bf E}\left( {\bf A},{\mathbb V}\right) $ the set of all {\it non-split $s$-dimensional central extensions} of ${\bf A}$ by
${\mathbb V}$. By above, we can write
\[
{\bf E}\left( {\bf A},{\mathbb V}\right) =\left\{ {\bf A}_{\theta }:\theta \left( x,y\right) = \sum_{i=1}^{s}\theta _{i}\left( x,y\right) e_{i} \ \ \text{and} \ \ \left\langle \left[ \theta _{1}\right] ,\left[ \theta _{2}\right] ,\dots,
\left[ \theta _{s}\right] \right\rangle \in {\bf T}_{s}({\bf A}) \right\} .
\]
We also have the following result, which can be proved as in \cite[Lemma 17]{hac16}.

\begin{lemma}
 Let ${\bf A}_{\theta },{\bf A}_{\vartheta }\in {\bf E}\left( {\bf A},{\mathbb V}\right) $. Suppose that $\theta \left( x,y\right) =  \displaystyle \sum_{i=1}^{s}
\theta _{i}\left( x,y\right) e_{i}$ and $\vartheta \left( x,y\right) =
\displaystyle \sum_{i=1}^{s} \vartheta _{i}\left( x,y\right) e_{i}$.
Then the  Zinbiel  algebras ${\bf A}_{\theta }$ and ${\bf A}_{\vartheta } $ are isomorphic
if and only if
$$\operatorname{Orb}\left\langle \left[ \theta _{1}\right] ,
\left[ \theta _{2}\right] ,\dots,\left[ \theta _{s}\right] \right\rangle =
\operatorname{Orb}\left\langle \left[ \vartheta _{1}\right] ,\left[ \vartheta
_{2}\right] ,\dots,\left[ \vartheta _{s}\right] \right\rangle .$$
\end{lemma}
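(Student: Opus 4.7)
The plan is to adapt the classical Skjelbred--Sund argument (as in \cite[Lemma 17]{hac16}) to the Zinbiel setting. The crucial starting observation is that, since $\operatorname{Ann}(\theta)\cap \operatorname{Ann}({\bf A})=0$ and the same for $\vartheta$, one has $\operatorname{Ann}({\bf A}_{\theta}) = {\mathbb V} = \operatorname{Ann}({\bf A}_{\vartheta})$ as follows from the formula $\operatorname{Ann}({\bf A}_{\theta})=(\operatorname{Ann}(\theta)\cap\operatorname{Ann}({\bf A}))\oplus {\mathbb V}$ recorded earlier. Hence any algebra isomorphism $\Phi\colon {\bf A}_{\theta}\to {\bf A}_{\vartheta}$ must preserve the annihilators, so $\Phi({\mathbb V})={\mathbb V}$, and $\Phi$ descends to an isomorphism ${\bf A}={\bf A}_{\theta}/{\mathbb V}\to {\bf A}_{\vartheta}/{\mathbb V}={\bf A}$.

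For the forward direction, I would fix a linear complement to $\mathbb V$ identifying it with ${\bf A}$ on both sides, and write $\Phi(x)=\phi(x)+\xi(x)$ for $x\in {\bf A}$, with $\phi\in \operatorname{Aut}({\bf A})$ the induced automorphism and $\xi\colon {\bf A}\to {\mathbb V}$ linear, while $\Phi|_{\mathbb V}$ is encoded by an invertible matrix $(q_{ij})$ so that $\Phi(e_i)=\sum_j q_{ij}e_j$. Expanding the identity $\Phi([x,y]_{{\bf A}_\theta})=[\Phi(x),\Phi(y)]_{{\bf A}_\vartheta}$ and splitting into the ${\bf A}$-component and the ${\mathbb V}$-component yields two equations: the ${\bf A}$-component is automatic from $\phi\in\operatorname{Aut}({\bf A})$, while the ${\mathbb V}$-component gives $\sum_i \theta_i(x,y)\Phi(e_i)=\sum_i \vartheta_i(\phi(x),\phi(y))e_i - \xi(xy)$, that is $\sum_i (\phi\vartheta_i)e_i = \sum_{i,j}q_{ij}\theta_i e_j + \delta\xi$ after rearranging (up to relabeling). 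Reading this in $H^2({\bf A},\mathbb C)$ shows that the classes $[\phi\vartheta_j]$ lie in the span of the $[\theta_i]$, and invertibility of $(q_{ij})$ forces the two subspaces to coincide, i.e.\ $\operatorname{Orb}\langle [\theta_1],\ldots,[\theta_s]\rangle = \operatorname{Orb}\langle [\vartheta_1],\ldots,[\vartheta_s]\rangle$.

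For the converse, assuming the orbits coincide, I would pick $\phi\in \operatorname{Aut}({\bf A})$ with $\phi\langle[\theta_1],\ldots,[\theta_s]\rangle=\langle[\vartheta_1],\ldots,[\vartheta_s]\rangle$, extract an invertible matrix $(q_{ij})$ and a linear map $\xi\colon {\bf A}\to {\mathbb V}$ from the relations $\phi\theta_i=\sum_j q_{ij}\vartheta_j+\delta\xi_i$, and use these data to define $\Phi\colon {\bf A}_\theta\to {\bf A}_\vartheta$ by $\Phi(x+v)=\phi(x)+\xi(x)+Q(v)$, where $Q$ is the linear automorphism of $\mathbb V$ determined by $(q_{ij})$. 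The verification that $\Phi$ intertwines the two bracket structures is the exact reverse of the computation above. Prior to both directions one must check that the $\operatorname{Aut}({\bf A})$-action on $Z^2({\bf A},\mathbb C)$ preserves $B^2({\bf A},\mathbb C)$, so that the induced action on $H^2({\bf A},\mathbb C)$ and on $G_s(H^2({\bf A},\mathbb C))$ is well defined; this is immediate from $\phi(\delta f)=\delta(f\circ \phi)$.

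The main obstacle, and the only place where the Zinbiel identity actually enters, is confirming that the bilinear maps produced in the backward construction genuinely lie in $Z^2({\bf A},\mathbb C)$, i.e.\ satisfy $\theta(xy,z)=\theta(x,yz+zy)$, so that ${\bf A}_\theta$ is indeed a Zinbiel algebra. This reduces to the closure of $Z^2$ under the $\operatorname{Aut}({\bf A})$-action and under addition of coboundaries, both of which follow at once from $\phi$ being an algebra automorphism and from the definition of $\delta$. Once this bookkeeping is in place, the argument is essentially a careful transcription of the Lie/associative case, with the single Zinbiel cocycle identity replacing the Jacobi/associativity cocycle identity throughout.
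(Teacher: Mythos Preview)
Your proposal is correct and follows precisely the route the paper indicates: the paper does not give its own proof but refers to \cite[Lemma 17]{hac16}, and your argument is exactly the Skjelbred--Sund adaptation outlined there, specialized to the Zinbiel cocycle identity. The only cosmetic slip is in the converse direction, where your relations $\phi\theta_i=\sum_j q_{ij}\vartheta_j+\delta\xi_i$ produce an isomorphism ${\bf A}_\vartheta\to{\bf A}_\theta$ rather than ${\bf A}_\theta\to{\bf A}_\vartheta$ (just transpose $Q$ or swap the roles), but since isomorphism is symmetric this is harmless.
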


This shows that there exists a one-to-one correspondence between the set of $\operatorname{Aut}({\bf A})$-orbits on ${\bf T}_{s}\left( {\bf A}\right) $ and the set of
isomorphism classes of ${\bf E}\left( {\bf A},{\mathbb V}\right) $. Consequently we have a
procedure that allows us, given a  Zinbiel  algebra ${\bf A}'$ of
dimension $n-s$, to construct all non-split central extensions of ${\bf A}'$. This procedure is:

\begin{enumerate}
\item For a given  Zinbiel  algebra ${\bf A}'$ of dimension $n-s $, determine ${\rm H^{2}}( {\bf A}',\mathbb {C}) $, $\operatorname{Ann}({\bf A}')$ and $\operatorname{Aut}({\bf A}')$.

\item Determine the set of $\operatorname{Aut}({\bf A}')$-orbits on ${\bf T}_{s}({\bf A}') $.

\item For each orbit, construct the  Zinbiel  algebra associated with a
representative of it.
\end{enumerate}

\medskip

\subsubsection{Notations}
Let us introduce the following notations. Let ${\bf A}$ be a nilpotent algebra with
a basis $\{e_{1},e_{2}, \ldots, e_{n}\}.$ Then by $\Delta_{ij}$\ we will denote the
bilinear form
$\Delta_{ij}:{\bf A}\times {\bf A}\longrightarrow \mathbb C$
with $\Delta_{ij}(e_{l},e_{m}) = \delta_{il}\delta_{jm}.$
The set $\left\{ \Delta_{ij}:1\leq i, j\leq n\right\}$ is a basis for the linear space of
bilinear forms on ${\bf A},$ so every $\theta \in
{\rm Z^2}({\bf A},\bf \mathbb V )$ can be uniquely written as $
\theta = \displaystyle \sum_{1\leq i,j\leq n} c_{ij}\Delta _{{i}{j}}$, where $
c_{ij}\in \mathbb C$.
Let us fix the following notations for our nilpotent algebras:

$$\begin{array}{lll}

{\mathfrak N}_{j}& \mbox{---}& j\mbox{th }4\mbox{-dimensional   $2$-step nilpotent algebra.}\\

[\mathfrak{A}]^i_{j}& \mbox{---}& 
j\mbox{th }i\mbox{-dimensional central extension of   $3$-dimensional Zinbiel algebra $\mathfrak{A}$ (see, \cite{centr3zinb}).} 

\end{array}$$
 
\subsection{The algebraic classification of complex $4$-dimensional Zinbiel algebras and their cohomology spaces}

The present table collects all information about multiplication tables and cohomology spaces of $4$-dimensional Zinbiel algebras 
(for the classification of $4$-dimensional Zinbiel algebras see, \cite{kppv} and 
the Corrigendum of  \cite{kppv}), which will be used in our main classification.

\begin{longtable}{ll llllll} 
\hline
\multicolumn{8}{c}{{\bf The list of 2-step nilpotent 4-dimensional Zinbiel algebras}}  \\
\hline
 
{${\mathfrak N}_{01}$} &$:$ &  $e_1e_1 = e_2$ &&&&\\ 
\multicolumn{8}{l}{
${\rm H}^2({\mathfrak N}_{01})=
\Big\langle 
[\Delta_{ 1 3}], [\Delta_{ 1 4}], [\Delta_{ 3 1}], [\Delta_{3 3}], [\Delta_{ 3 4}], [\Delta_{ 4 1}], [\Delta_{4 3}], [\Delta_{ 4 4}], [\Delta_{ 1 2}+2\Delta_{ 2 1}]
\Big\rangle $}\\
 
\hline
{${\mathfrak N}_{02}$} &$:$ & $e_1e_1 = e_3$& $e_2e_2=e_4$  &&&  \\ 

\multicolumn{8}{l}{
${\rm H}^2({\mathfrak N}_{02})=
\Big\langle 
[\Delta_{ 1 2}], [\Delta_{ 2 1}], [\Delta_{ 1 3} + 2\Delta_{ 3 1}], [\Delta_{ 2 4} + 2\Delta_{4 2}]
\Big\rangle $}\\ 

\hline
{${\mathfrak N}_{03}$} &$:$ &  $e_1e_2=  e_3$ & $e_2e_1=-e_3$ &&& \\ 

\multicolumn{8}{l}{
${\rm H}^2({\mathfrak N}_{03})=
\Big\langle 
[\Delta_{ 1 1}], [\Delta_{ 1 2}], [\Delta_{ 1 4}], [\Delta_{ 2 2}], [\Delta_{2 4}], [\Delta_{ 4 1}], [\Delta_{ 4 2}], [\Delta_{ 4 4}]
, [\Delta_{ 1 3}], [\Delta_{ 2 3}], [\Delta_{ 4 3}]
\Big\rangle $}\\

\hline
${\mathfrak N}_{04}^{\alpha}$ &$:$ & $e_1e_1=  e_3$ & $e_1e_2=e_3$& $e_2e_2=\alpha e_3$  &&\\  

\multicolumn{8}{l}{
${\rm H}^2({\mathfrak N}_{04}^{\alpha})=
\Big\langle 
[\Delta_{ 1 2}], [\Delta_{ 1 4}], [\Delta_{ 2 1}], [\Delta_{ 2 2}], [\Delta_{ 2 4}], [\Delta_{ 4 1}], [\Delta_{4 2}], [\Delta_{ 4 4}]
\Big\rangle $}\\

\hline
${\mathfrak N}_{05}$ &$:$ & $e_1e_1=  e_3$& $e_1e_2=e_3$&  $e_2e_1=e_3$ &&\\ 

\multicolumn{8}{l}{
${\rm H}^2({\mathfrak N}_{05})=
\Big\langle 
[\Delta_{ 1 1}], [\Delta_{ 1 2}], [\Delta_{ 1 4}], [\Delta_{2 2}], [\Delta_{ 2 4}], [\Delta_{ 4 1}], [\Delta_{4 2}], [\Delta_{ 4 4}]
\Big\rangle $}\\

\hline
${\mathfrak N}_{06}$ &$:$ & $e_1e_2 = e_4$& $e_3e_1 = e_4$   &&&\\ 

\multicolumn{8}{l}{
${\rm H}^2({\mathfrak N}_{06})=
\Big\langle 
[\Delta_{ 1 1}], [\Delta_{ 1 2}], [\Delta_{ 1 3}], [\Delta_{ 2 1}], [\Delta_{ 2 2}], [\Delta_{ 2 3}], [\Delta_{3 2}], [\Delta_{ 3 3}]
\Big\rangle $}\\

\hline
{${\mathfrak N}_{07}$} &$:$ & $e_1e_2 = e_3$ & $e_2e_1 = e_4$ &  $e_2e_2 = -e_3$ &&\\ 

 \multicolumn{8}{l}{
${\rm H}^2({\mathfrak N}_{07})=
\Big\langle 
[\Delta_{ 1 1}], [\Delta_{ 2 2}], [\Delta_{ 1 3}-\Delta_{ 1 4}-\Delta_{ 2 3}+\Delta_{2 4}-2\Delta_{ 3 2}]
\Big\rangle $}\\


\hline
${\mathfrak N}_{08}^{\alpha}$ &$:$ & $e_1e_1 = e_3$ & $e_1e_2 = e_4$ & $e_2e_1 = -\alpha e_3$ & $e_2e_2 = -e_4$& \\ 
 
\multicolumn{8}{l}{
${\rm H}^2({\mathfrak N}_{08}^{\alpha\neq0,1})=
\Big\langle 
[\Delta_{ 1 2}], [\Delta_{ 2 1}]
\Big\rangle $}\\

\multicolumn{8}{l}{
${\rm H}^2({\mathfrak N}_{08}^{0})=
\Big\langle 
[\Delta_{ 1 2}], [\Delta_{ 2 1}], [\Delta_{ 1 3}+2\Delta_{ 3 1}]
\Big\rangle $}\\

\multicolumn{8}{l}{
{${\rm H}^2({\mathfrak N}_{08}^{1})=
\Big\langle 
[\Delta_{ 1 1}], [\Delta_{ 1 2}],
[\Delta_{ 2 3}-\Delta_{ 1 3}-2\Delta_{ 3 1}+\Delta_{ 3 2}+\Delta_{4 1}], [\Delta_{ 2 4} - \Delta_{ 1 4}-\Delta_{ 3 2} - \Delta_{ 4 1}+2\Delta_{4 2}]
\Big\rangle $}}\\

\hline
${\mathfrak N}_{09}^{\alpha}$ &$:$ & $e_1e_1 = e_4$ & $e_1e_2 = \alpha e_4$ &  $e_2e_1 = -\alpha e_4$ & $e_2e_2 = e_4$ &  $e_3e_3 = e_4$\\

\multicolumn{8}{l}{
${\rm H}^2({\mathfrak N}_{09}^{\alpha})=
\Big\langle 
[\Delta_{ 1 2}], [\Delta_{ 1 3}], [\Delta_{ 2 1}], [\Delta_{ 2 2}], [\Delta_{ 2 3}], [\Delta_{ 3 1}], [\Delta_{ 3 2}], [\Delta_{ 3 3}]
\Big\rangle $}\\

\hline
${\mathfrak N}_{10}$ &$:$ &  $e_1e_2 = e_4$ & $e_1e_3 = e_4$ & $e_2e_1 = -e_4$ & $e_2e_2 = e_4$ & $e_3e_1 = e_4$  \\ 

\multicolumn{8}{l}{
${\rm H}^2({\mathfrak N}_{10})=
\Big\langle 
[\Delta_{ 1 1}], [\Delta_{ 1 2}], [\Delta_{ 1 3}], [\Delta_{ 2 1}], [\Delta_{2 3}], [\Delta_{ 3 1}], [\Delta_{3 2}], [\Delta_{ 3 3}]
\Big\rangle $}\\

\hline
${\mathfrak N}_{11}$ &$:$ &  $e_1e_1 = e_4$ & $e_1e_2 = e_4$ & $e_2e_1 = -e_4$ & $e_3e_3 = e_4$&  \\

\multicolumn{8}{l}{
${\rm H}^2({\mathfrak N}_{11})=
\Big\langle 
[\Delta_{ 1 1}], [\Delta_{ 1 2}], [\Delta_{ 1 3}], [\Delta_{ 2 1}], [\Delta_{2 2}], [\Delta_{2 3}], [\Delta_{ 3 1}], [\Delta_{3 2}]
\Big\rangle $}\\

\hline
{${\mathfrak N}_{12}$} &$:$ &  $e_1e_2 = e_3$ & $e_2e_1 = e_4$  &&& \\ 

 \multicolumn{8}{l}{
${\rm H}^2({\mathfrak N}_{12})=
\Big\langle 
[\Delta_{ 1 1}], [\Delta_{ 2 2}],[\Delta_{ 1 4}-\Delta_{ 1 3}], [\Delta_{ 2 4}-\Delta_{ 2 3}]
\Big\rangle $}\\


\hline
${\mathfrak N}_{13}$ &$:$ & $e_1e_1 = e_4$ & $e_1e_2 = e_3$ & $e_2e_1 = -e_3$ & 
\multicolumn{2}{l}{$e_2e_2=2e_3+e_4$} \\
 
\multicolumn{8}{l}{
${\rm H}^2({\mathfrak N}_{13})=
\Big\langle 
[\Delta_{ 2 1}],[\Delta_{ 2 2}]
\Big\rangle $}\\

\hline
{${\mathfrak N}_{14}^{\alpha}$} &$:$ &   $e_1e_2 = e_4$ & $e_2e_1 =\alpha e_4$ & $e_2e_2 = e_3$&& \\ 

\multicolumn{8}{l}{
${\rm H}^2({\mathfrak N}_{14}^{\alpha\neq-1})=
\Big\langle 
[\Delta_{ 1 1}], [\Delta_{ 2 1}],[\Delta_{ 2 3}+2\Delta_{ 3 2}], [2\alpha\Delta_{ 2 4}+(\alpha+1)\left(\Delta_{1 3}+2\alpha\Delta_{3 1}+2\Delta_{4 2}\right)]
\Big\rangle $}\\ 


\multicolumn{8}{l}{
${\rm H}^2({\mathfrak N}_{14}^{-1})=
\Big\langle 
[\Delta_{ 1 1}], [\Delta_{ 2 1}]
, [\Delta_{ 1 4}], [\Delta_{ 2 4}], [\Delta_{ 2 3}+2\Delta_{ 3 2}]
\Big\rangle $}\\ 


\hline

${\mathfrak N}_{15}$ &$:$ &  $e_1e_2 = e_4$ & $e_2e_1 = -e_4$ & $e_3e_3 = e_4$ && \\

\multicolumn{8}{l}{
${\rm H}^2({\mathfrak N}_{15})=
\Big\langle 
[\Delta_{ 1 1}], [\Delta_{ 1 3}], [\Delta_{ 2 1}], [\Delta_{ 2 2}], [\Delta_{2 3}], [\Delta_{ 3 1}], [\Delta_{ 3 2}], [\Delta_{3 3}]
\Big\rangle $}\\
\hline
\multicolumn{8}{c}{
\bf The list of 3-step nilpotent 4-dimensional Zinbiel algebras} \\
\hline

{$\mathfrak{Z}_1$} &$:$& $ e_1e_1=e_2$ & $e_1 e_2 =e_3$ &$e_2 e_1 =2e_3$ &\\ 

\multicolumn{8}{l}{
${\rm H}^2(\mathfrak{Z}_1)=
\Big\langle 
[\Delta_{13}+3 \Delta_{22}+3\Delta_{31}], [\Delta_{ 1 4}], [\Delta_{ 41}], [\Delta_{ 4 4}], 
\Big\rangle $}\\

\hline

$[\mathfrak{N}_1^{\mathbb{C}}]^1_{01}$ &$:$& $ e_1e_1=e_2$ & $e_1 e_2 =e_4$ &$e_2 e_1 =2e_4$ & $e_3 e_3 =e_4$ &\\ 

\multicolumn{8}{l}{
${\rm H}^2([\mathfrak{N}_1^{\mathbb{C}}]^1_{01})=
\Big\langle 
[\Delta_{ 1 3}], [\Delta_{ 3 1}]
\Big\rangle $}\\

\hline

$[\mathfrak{N}_1^{\mathbb{C}}]^1_{02}$ &$:$& $ e_1e_1=e_2$ & $e_1 e_2 =e_4$ &$e_1 e_3 =e_4$ &$e_2 e_1 =2e_4$ &\\ 

\multicolumn{8}{l}{
${\rm H}^2([\mathfrak{N}_1^{\mathbb{C}}]^1_{02})=
\Big\langle 
[\Delta_{ 1 3}], [\Delta_{ 3 1}], [\Delta_{ 3 3}]
\Big\rangle $}\\

\hline

$[\mathfrak{N}_1]^1_{01}$ &$:$& $e_1e_2=  e_3$&$e_1e_3=e_4$
&$ e_2e_1= -e_3$ &$e_2e_2=e_4$ &\\

\multicolumn{8}{l}{
${\rm H}^2([\mathfrak{N}_1]^1_{01})=
\Big\langle 
[\Delta_{ 1 1}], [\Delta_{ 1 2}], [\Delta_{ 1 3}], [\Delta_{ 2 3}]
\Big\rangle $}\\

\hline

$[\mathfrak{N}_1]^1_{02}$ &$:$& $e_1e_2=  e_3$&$e_1e_3=e_4$
&$ e_2e_1= -e_3$ &&\\ 

\multicolumn{8}{l}{
${\rm H}^2([\mathfrak{N}_1]^1_{02})=
\Big\langle 
[\Delta_{ 1 1}], [\Delta_{ 1 2}], [\Delta_{ 2 2}], [\Delta_{ 2 3}]
\Big\rangle $}\\

\hline

\multicolumn{8}{c}{\bf The list of 4-dimensional null-filiform Zinbiel algebras}   \\
\hline

{$[\mathfrak{Z}_1]^1_1$} &$:$& 
$e_1e_1=e_2$ & $e_1e_2=\textstyle\frac{1}{2}e_3$ & $e_1e_3=2e_4$   & $e_2e_1=e_3$ & $e_2e_2=3e_4$ & $e_3e_1=6e_4$ \\

\multicolumn{8}{l}{
${\rm H}^2([\mathfrak{Z}_1]^1_1)=
\Big\langle 
[\Delta_{ 1 4}+8\Delta_{ 2 3}+12\Delta_{ 3 2}+4\Delta_{ 4 1}]
\Big\rangle $}\\

\hline
\end{longtable}

 \begin{remark}
From the previous list, we will only consider ${\mathfrak N}_{01}$, ${\mathfrak N}_{02}$, ${\mathfrak N}_{03}$, ${\mathfrak N}_{07}$, ${\mathfrak N}_{08}^{1}$, ${\mathfrak N}_{12}$, ${\mathfrak N}_{14}^{\alpha}$, $\mathfrak{Z}_1$, $[\mathfrak{Z}_1]^1_1$. All $n$-dimensional central extensions of the remaining algebras are split or have $(n + 1)$-dimensional annihilator.
\end{remark} 

\begin{remark}
In what follows, when we consider an automorphism, the entries of the matrix that are choosen to be zero will be omitted in the description.
\end{remark}

\subsubsection{Central extensions of ${\mathfrak N}_{01}$}

Let us use the following notations:
\begin{longtable}{lllll}
$\nabla_1 =[\Delta_{12}+2\Delta_{21}],$& $\nabla_2 = [\Delta_{13}],$ & $ \nabla_3 = [\Delta_{14}],$ &
$\nabla_4 = [\Delta_{31}],$ & $\nabla_5 = [\Delta_{33}],$\\
$\nabla_6 = [\Delta_{34}],$ & $ \nabla_7 = [\Delta_{41}],$ &
$\nabla_8 = [\Delta_{43}],$ & $\nabla_9 = [\Delta_{44}].$
\end{longtable}
Take $\theta=\sum\limits_{i=1}^9\alpha_i\nabla_i\in {\rm H}^2({\mathfrak N}_{01}).$ 
	The automorphism group of ${\mathfrak N}_{01}$ is generated by invertible matrices of the form

\begin{center}$	\phi=
	\begin{pmatrix}
	x &  0 &  0 & 0\\
	y &  x^2  &  z & w\\
	q &  0  &  s & t \\
	r &  0  &  u & v \\
	\end{pmatrix}.$ \end{center}

	Since
	$$
	\phi^T\begin{pmatrix}
    0 &  \alpha_1 & \alpha_2 & \alpha_3\\
	2\alpha_1  &  0 & 0 & 0 \\
	\alpha_4&  0  & \alpha_5 & \alpha_6\\
	\alpha_7 & 0 & \alpha_8 & \alpha_9
	\end{pmatrix} \phi=	\begin{pmatrix}
\alpha^\ast &  \alpha_1^\ast & \alpha_2^\ast & \alpha_3^\ast\\
	2\alpha_1^\ast  &  0 & 0 & 0 \\
	\alpha_4^\ast &  0  & \alpha_5^\ast & \alpha_6^\ast\\
	\alpha_7^\ast & 0 & \alpha_8^\ast & \alpha_9^\ast
	\end{pmatrix},
	$$	
we have that the action of ${\rm Aut} ({\mathfrak N}_{01})$ on the subspace
$\langle \sum\limits_{i=1}^9\alpha_i\nabla_i  \rangle$
is given by
$\langle \sum\limits_{i=1}^9\alpha_i^{*}\nabla_i\rangle,$
where

\begin{longtable}{lcl}
$\alpha_1^\ast$&$=$&$   \alpha_1x^3,$ \\
$\alpha_2^\ast$&$=$&$ \alpha_1 x z+\alpha_2 s x + \alpha_3 u x + \alpha_5 q s+ \alpha_6 q u + \alpha_8 r s  + \alpha_9ru,$ \\
$\alpha_3^\ast$&$=$&$ \alpha_1 wx+ \alpha_2 tx + \alpha_3 vx + \alpha_5 q t +  \alpha_6 q v +\alpha_8 r t +\alpha_9 r v ,$ \\
$\alpha_4^\ast$&$=$&$  2\alpha_1x z+\alpha_4 s x +\alpha_5 q s + \alpha_6 r s + \alpha_7 u x +\alpha_8 qu + \alpha_9 ru,$\\
$\alpha_5^\ast$&$=$&$ \alpha_5 s^2 +  (\alpha_6 s + \alpha_8 s + \alpha_9 u)u,$\\
$\alpha_6^\ast$&$=$&$ \alpha_5 s t + \alpha_6 s v +\alpha_8 t u +  \alpha_9 u v,$\\
$\alpha_7^\ast$&$=$&$ 2 \alpha_1 w x+\alpha_4 tx +\alpha_5 qt + \alpha_6 rt + \alpha_7 vx +\alpha_8 qv + \alpha_9 r v,$\\
$\alpha_8^\ast$&$=$&$ \alpha_5 s t + \alpha_6 t u + \alpha_8 s v + \alpha_9 u v,$\\
$\alpha_9^\ast$&$=$&$ \alpha_5 t^2 +  (\alpha_6 t + \alpha_8 t + \alpha_9 v)v.$
\end{longtable}

First note that we can not have $\alpha_1=0$, $\alpha_2=\alpha_4=\alpha_5=\alpha_6=\alpha_8=0$ or $\alpha_3=\alpha_6=\alpha_7=\alpha_8=\alpha_9=0$. Besides, considering $\phi$ given by 
 $x=s=v=1$, $z=-\frac{\alpha_4}{2\alpha_1}$ \ and \ $w=-\frac{\alpha_7}{2\alpha_1}$, 
we can suppose $\alpha_4=\alpha_7=0$. On the other hand, if we choose $\phi$ given by $x=u=t=1$ and $z=w$ we have the following change of position: $\alpha_2\leftrightarrow \alpha_3$, $\alpha_5\leftrightarrow \alpha_9$ and $\alpha_6\leftrightarrow \alpha_8$. Finally, we note that if $\alpha_5\neq 0$ then we can choose $\alpha_8=0$ (just take $\phi$ given by $x=s=v=1$ and $t=-\frac{\alpha_8}{\alpha_5}$) and applying the automorphism given by $x=s=v=1$, $z=\frac{\alpha_6 \alpha_3}{\alpha_9}$, $w=\alpha_3$ and $r=-\frac{2\alpha_3}{\alpha_9}$, we can suppose $\alpha_3=0$. In what follow, we will consider $\alpha_1=1$ and $\alpha_4=\alpha_7=0$. Then:

     \begin{enumerate}

             \item Suppose $\alpha_5=\alpha_9=0$ and $\alpha_6=-\alpha_8$.
             \begin{enumerate}
                 \item If $\alpha_6=0$, note that $\alpha_3\neq 0$. Therefore, by choosing $x=s=1$, $u=-\frac{\alpha_2}{\alpha_3}$ and $v=\frac{1}{\alpha_3}$, we obtain the representative $\langle \nabla_1 +\nabla_3 \rangle$, that we do not consider since the associated algebra would have $2$-dimensional annihilator.
                 \item If $\alpha_6\neq 0$, by choosing $x=v=-u=1$, $z=\frac{\alpha_3}{3}$, $w=-\frac{\alpha_2+\alpha_3\alpha_6}{3 \alpha_6}$, $q=-\frac{2\alpha_3}{3 \alpha_6}$, $t=\frac{1}{\alpha_6}$ and $r=\frac{2\alpha_2}{3 \alpha_6}$, we obtain the representative $\langle \nabla_1+\nabla_6-\nabla_8  \rangle$.
             \end{enumerate}
             \item Suppose $\alpha_5=\alpha_9=0$ and $\alpha_6\neq -\alpha_8$.
             \begin{enumerate}
                 \item If $\alpha_6=2\alpha_8$, we have $\alpha_8\neq 0$. Then, there are two cases to consider:
                 \begin{enumerate}
                     \item If $\alpha_2=0$, by choosing $x=t=1$, $u=\frac{1}{\alpha_8}$, $z=\frac{\alpha_3}{3\alpha_8}$ and $q=-\frac{2\alpha_3}{3\alpha_8}$, we obtain the representative $\langle \nabla_1+\nabla_6+2\nabla_8  \rangle$.
                     \item If $\alpha_2\neq 0$, by choosing $x=1$, $t=\frac{1}{\alpha_2}$, $u=\frac{\alpha_2}{\alpha_8}$, $z=\frac{\alpha_2\alpha_3}{3\alpha_8}$ and $r=-\frac{2\alpha_3}{3\alpha_8}$, we obtain the representative $\langle \nabla_1+\nabla_3+\nabla_6+2\nabla_8  \rangle$.
                 \end{enumerate}
                 \item Suppose $\alpha_6\neq 2\alpha_8$.
                  \begin{enumerate}
                     \item Suppose $\alpha_8= 2\alpha_6$ and, thus, $\alpha_6\neq 0$.
                     \begin{enumerate}
                         \item If $\alpha_3=0$, by choosing $x=v=1$, $s=\frac{1}{\alpha_6}$, $z=\frac{\alpha_2}{3\alpha_6}$ and $r=-\frac{2\alpha_2}{3\alpha_6}$, we obtain the representative $\langle \nabla_1+\nabla_6+2\nabla_8  \rangle$.
                         \item If $\alpha_3\neq 0$, by choosing $x=1$, $s=\frac{\alpha_3}{\alpha_6}$, $v=\frac{1}{\alpha_3}$, $z=\frac{\alpha_2\alpha_3}{3\alpha_6}$ and $r=-\frac{2\alpha_2}{3\alpha_6}$, we obtain the representative $\langle \nabla_1+\nabla_3+\nabla_6+2\nabla_8  \rangle$.
                     \end{enumerate}
                     \item If $\alpha_8\neq 2\alpha_6$, since we can change position between $\alpha_6$ and $\alpha_8$, we can suppose $\alpha_8\neq 0$. Then, by choosing $x=u=1$, $t=\frac{1}{\alpha_8}$, $z=\frac{\alpha_3\alpha_8}{2\alpha_6 - \alpha_8}$ $w=\frac{\alpha_2\alpha_6}{(2\alpha_8-\alpha_6)\alpha_8}$, $q=\frac{2\alpha_3}{\alpha_8-2\alpha_6}$ and  $r=\frac{2\alpha_2}{\alpha_6 - 2 \alpha_8}$, we obtain the family of representatives $\langle \nabla_1+\nabla_6+\alpha\nabla_8  \rangle$, with $\alpha\neq -1, \frac{1}{2}, 2$. Note that for $\alpha \neq 0$, $\langle \nabla_1+\nabla_6+\alpha\nabla_8  \rangle$ and $\langle \nabla_1+\nabla_6+\frac{1}{\alpha}\nabla_8  \rangle$ are in the same orbit.
                 \end{enumerate}
             \end{enumerate} 
        \item Suppose $\alpha_5= 0$ and $\alpha_9\neq 0$.
         \begin{enumerate}
             \item Suppose $\alpha_8= -\alpha_6$.
             \begin{enumerate}
                 \item If $\alpha_6\neq 0$, by choosing $x=\sqrt[3]{\alpha_6^2\alpha_9}$, $z=-\frac{3 \alpha_3 \alpha_6 + 4 \alpha_2 \alpha_9}{9}$, $w=-\frac{\alpha_2 \alpha_9}{3}$, $q=-\frac{2 \sqrt[3]{\alpha_9} (3 \alpha_3 \alpha_6 + \alpha_2 \alpha_9)}{9 \sqrt[3]{\alpha_6^4}}$, $t=\alpha_9$, $r=\frac{2 \alpha_2 \sqrt[3]{\alpha_9}}{3 \sqrt[3]{\alpha_6}}$ and $u=-\alpha_6$, we obtain the representative $\langle \nabla_1+\nabla_5+\nabla_6-\nabla_8  \rangle$.
                 \item If $\alpha_6=0$, note that $\alpha_2\neq 0$. By choosing $x=1$, $s=\frac{1}{\alpha_2}$, $t=-\frac{\alpha_3}{\alpha_2\sqrt{\alpha_9}}$  and $v=\frac{1}{\sqrt{\alpha_9}}$, we obtain the representative $\langle \nabla_1+\nabla_2+\nabla_9  \rangle$.
             \end{enumerate}
             \item If $\alpha_8\neq -\alpha_6$, by choosing $x=s=t=1$ and $v=-\frac{\alpha_6+\alpha_8}{\alpha_9}$, we return to the case $\alpha_5=\alpha_9=0$.
         \end{enumerate}     
        \item  Suppose $\alpha_5\alpha_9\neq 0$. By choosing $x=t=u=1$ and $s=-\frac{\alpha_6 + \alpha_8 + \sqrt{(\alpha_6 + \alpha_8)^2 - 4 \alpha_5 \alpha_9}}{2 \alpha_5}$, we return to the case $\alpha_5=0$ and $\alpha_9\neq 0$
    \end{enumerate}

Summarizing, we have the following distinct orbits
\begin{center}
    $\langle \nabla_1+\nabla_2+\nabla_9  \rangle,$
    $\langle \nabla_1+\nabla_6+\alpha\nabla_8  \rangle^{O(\alpha)=O(\frac{1}{\alpha})}$,
    $\langle \nabla_1+\nabla_5+\nabla_6-\nabla_8  \rangle,$
    $\langle \nabla_1+\nabla_3+\nabla_6+2\nabla_8  \rangle$,
\end{center}
which give the following new algebras:

\begin{longtable}{llllllll}
$\mathcal{Z}_{01}$ &$:$& $ e_1e_1=e_2$ & $e_1 e_2 =e_5$ & $e_1 e_3 =e_5$ & $e_2e_1=2e_5$ & $e_4e_4 =e_5$ \\
\hline
$\mathcal{Z}_{02}^\alpha$ &$:$& $ e_1e_1=e_2$ & $e_1 e_2 =e_5$ & $e_2e_1=2e_5$ & $e_3e_4=e_5$ & $e_4e_3 =\alpha e_5$ \\
\hline
$\mathcal{Z}_{03}$ &$:$& $ e_1e_1=e_2$ & $e_1 e_2 =e_5$ & $e_2e_1=2e_5$  & $e_3e_3=e_5$ & $e_3e_4=e_5$ & $e_4e_3 =-e_5$ \\
\hline
$\mathcal{Z}_{04}$ &$:$& $ e_1e_1=e_2$ & $e_1 e_2 =e_5$ & $e_1e_4=e_5$  & $e_2e_1=2e_5$ & $e_3e_4=e_5$ & $e_4e_3=2e_5$ 
\end{longtable}

\subsubsection{Central extensions of ${\mathfrak N}_{02}$}

Let us use the following notations:
\begin{longtable}{llll}
$\nabla_1 =[\Delta_{12}],$& $\nabla_2 = [\Delta_{21}],$ & $ \nabla_3 = [\Delta_{ 13} + 2\Delta_{31}],$ &
$\nabla_4 = [\Delta_{ 2 4} + 2\Delta_{4 2}].$
\end{longtable}
Take $\theta=\sum\limits_{i=1}^4\alpha_i\nabla_i\in {\rm H}^2({\mathfrak N}_{02}).$ 
	The automorphism group of ${\mathfrak N}_{02}$ is generated by invertible matrices of the form

\begin{center} $\phi_1=
	\begin{pmatrix}
	x &    0  &  0 & 0\\
	0 &  y  &  0 & 0\\
	z &  t  &  x^2 & 0 \\
	u &  v  &  0 & y^2 \\
	\end{pmatrix}$ and $	\phi_2=
	\begin{pmatrix}
	0 &  x &  0 & 0\\
	y &  0  &  0 & 0\\
	z &  t  &  0 & x^2 \\
	u &  v  &  y^2 & 0 \\
	\end{pmatrix}.$\end{center}

	Since
	$$
	\phi^T_1\begin{pmatrix}
    0 &  \alpha_1 & \alpha_3 & 0\\
	\alpha_2  &  0 & 0 & \alpha_4 \\
	2\alpha_3&  0  & 0 & 0\\
	0 & 2\alpha_4 & 0 & 0
	\end{pmatrix} \phi_1=	\begin{pmatrix}
\alpha^\ast &  \alpha_1^\ast & \alpha_3^\ast & 0\\
	\alpha_2^\ast  &  \alpha^{\ast\ast} & 0 & \alpha_4^\ast \\
	2\alpha_3^\ast&  0  & 0 & 0\\
	0 & 2\alpha_4^\ast & 0 & 0
	\end{pmatrix},
	$$	
we have that the action of ${\rm Aut} ({\mathfrak N}_{02})$ on the subspace
$\langle \sum\limits_{i=1}^4\alpha_i\nabla_i  \rangle$
is given by
$\langle \sum\limits_{i=1}^4\alpha_i^{*}\nabla_i\rangle,$
where 
\begin{longtable}{llll}
$\alpha_1^\ast=  \alpha_1 x y+ \alpha_3 t x + 2 \alpha_4 u y, $ &
$\alpha_2^\ast=  \alpha_2 x y + 2 \alpha_3 t x + \alpha_4 u y, $ &
$\alpha_3^\ast=  \alpha_3x^3,$ &
$\alpha_4^\ast=  \alpha_4y^3.$
\end{longtable}

Since $\operatorname{Ann}({\mathfrak N}_{02})=\langle e_3,e_4 \rangle$, we can not have $\alpha_3\alpha_4=0$. Thus, we can suppose $\alpha_4=1$. Then, by choosing $x=\frac{1}{\sqrt[3]{\alpha_3}}$, $y=1$, $t=\frac{\alpha_1 - 2 \alpha_2}{3 \alpha_3}$ and 
$u=\frac{ \alpha_2-2 \alpha_1 }{3 \sqrt[3]{\alpha_3}}$, we obtain the representative  
\begin{center}
$\langle \nabla_3+\nabla_4  \rangle$,
\end{center}
which gives the following new algebra:

\begin{longtable}{llllllll}
$\mathcal{Z}_{05}$ &$:$& $ e_1e_1=e_3$ & $e_1 e_3 =e_5$ & $e_2e_2 =e_4$ & $e_2 e_4 =e_5$ & $e_3e_1=2e_5$ & $e_4e_2=2e_5$
\end{longtable}


\subsubsection{Central extensions of ${\mathfrak N}_{03}$}

Let us use the following notations:
\begin{longtable}{llllll}
$\nabla_1 =[\Delta_{11}],$& $\nabla_2 = [\Delta_{12}],$ & $\nabla_3 = [\Delta_{13}],$  & $ \nabla_4 = [\Delta_{14}],$ &
$\nabla_5 = [\Delta_{22}],$ \\
$\nabla_6 = [\Delta_{23}],$ & $\nabla_7 = [\Delta_{24}],$ &
$\nabla_8 = [\Delta_{41}],$ & $ \nabla_9 = [\Delta_{42}],$ &
 $\nabla_{10} = [\Delta_{43}],$ & $\nabla_{11} = [\Delta_{44}].$
\end{longtable}
Take $\theta=\sum\limits_{i=1}^{11}\alpha_i\nabla_i\in {\rm H}^2({\mathfrak N}_{03}).$ 
	The automorphism group of ${\mathfrak N}_{03}$ is generated by invertible matrices of the form

\begin{center} 
$	\phi=
	\begin{pmatrix}
	x &   w  &  0 & 0\\
	z &  y  &  0 & 0\\
	q &  r  &  xy-wz & s \\
	t &  u  &  0 & v \\
	\end{pmatrix}.$\end{center}

	Since
	$$
	\phi^T\begin{pmatrix}
    \alpha_1 &  \alpha_2 & \alpha_3 & \alpha_4\\
	0  &  \alpha_5 & \alpha_6 & \alpha_7 \\
	0 &  0  & 0 & 0\\
	\alpha_8 & \alpha_9 & \alpha_{10} & \alpha_{11}
	\end{pmatrix} \phi=	\begin{pmatrix}
\alpha_1^\ast &  \alpha_2^\ast+\alpha^\ast & \alpha_3^\ast & \alpha_4^\ast\\
	-\alpha^\ast  &  \alpha_5^\ast & \alpha_6^\ast & \alpha_7^\ast \\
	0 &  0  & 0 & 0\\
	\alpha_8^\ast & \alpha_9^\ast & \alpha_{10}^\ast & \alpha_{11}^\ast
	\end{pmatrix},
	$$	
we have that the action of ${\rm Aut} ({\mathfrak N}_{03})$ on the subspace
$\langle \sum\limits_{i=1}^{11}\alpha_i\nabla_i  \rangle$
is given by
$\langle \sum\limits_{i=1}^{11}\alpha_i^{*}\nabla_i\rangle,$
where 

\begin{longtable}{lll}
$\alpha_1^\ast $ & $=$ & $   \alpha_1 x^2 + \alpha_2 xz + \alpha_3 xq + 
 \alpha_4 xt +\alpha_5 z^2+\alpha_6 zq + \alpha_7 zt + \alpha_8 xt + \alpha_9 zt  + \alpha_{10} qt  + \alpha_{11} t^2,   $ \\
$\alpha_2^\ast$ & $=$ & $  2\alpha_1wx + \alpha_2 (x y+wz) +  \alpha_3(qw+ r x) + \alpha_4 (tw+u x) + 2\alpha_5 z y + \alpha_6 (r z   + q y)+ $\\
$ $ & $ $ & \multicolumn{1}{r}{$
 \alpha_7 (t y+ z u) + \alpha_8 (tw+u x) +\alpha_9 (t y + z u) +\alpha_{10} (r t + q u)+ 2\alpha_{11} t u,$} \\
$\alpha_3^\ast$ & $=$ & $   (\alpha_3 x+\alpha_6 z + \alpha_{10} t ) (xy-wz),$ \\
$\alpha_4^\ast$ & $=$ & $   \alpha_3 s x + \alpha_4 x v+\alpha_6 z s + \alpha_7 z v + \alpha_{10} s t +  \alpha_{11} t v ,$\\
$\alpha_5^\ast $ & $=$ & $    (\alpha_3r+\alpha_4u+\alpha_8u)w+(\alpha_5 y+\alpha_6 r + \alpha_7 u + \alpha_9 u )y+ \alpha_{10} r u + \alpha_{11} u^2, $\\
$\alpha_6^\ast $ & $=$ & $    (\alpha_3 w+\alpha_6 y+\alpha_{10} u )(xy-wz) , $\\
$\alpha_7^\ast $ & $=$ & $    \alpha_3 sw+\alpha_4 vw+ \alpha_6 s y+ \alpha_7 y v+\alpha_{10} s u  + \alpha_{11} u v, $\\
$\alpha_8^\ast $ & $=$ & $    (\alpha_8 x+\alpha_9 z + \alpha_{10} q + \alpha_{11}t ) v,$\\
$\alpha_9^\ast $ & $=$ & $    (\alpha_8 w+\alpha_9 y+\alpha_{10} r+\alpha_{11} u) v,$\\
$\alpha_{10}^\ast$ & $=$ & $   \alpha_{10}(xy-wz)v,$\\
$\alpha_{11}^\ast$ & $=$ & $   (\alpha_{10}s+\alpha_{11} v)v.$
\end{longtable}

First note that we can not have $\alpha_3=\alpha_6=\alpha_{10}=0$ or $\alpha_4=\alpha_7=\alpha_8=\alpha_9=\alpha_{10}=\alpha_{11}=0$. Besides, a standard computation shows that we can change the position of $\alpha_3$ and $\alpha_6$.


Since $\alpha_{10}^*=0$ if, and only if, $\alpha_{10}=0$, we have:

    \begin{enumerate}
        \item Suppose $\alpha_{10}=\alpha_{11}=0$. Note that in this case we can not have $\alpha_3=\alpha_6=0$.

        \begin{enumerate}

            \item 
            
             

            Suppose $\alpha_3\alpha_6\neq 0$ and $\alpha_9=0$. Write $H=\alpha_3^2 \alpha_5- \alpha_2 \alpha_3 \alpha_6 + \alpha_1 \alpha_6^2$.
            \begin{enumerate}
                    \item If $\alpha_8=0$ and $\alpha_4 \alpha_6 \neq \alpha_3 \alpha_7$, by choosing $x=1$, $w=-\frac{\alpha_6}{\alpha_3^2}$, $y=\frac{1}{\alpha_3}$, $q=-\frac{\alpha_1}{\alpha_3}$, $r=\frac{ 2 \alpha_1 \alpha_3 \alpha_6 \alpha_7-\alpha_1 \alpha_4 \alpha_6^2 + 
 \alpha_3^2 (\alpha_4 \alpha_5 - \alpha_2 \alpha_7)}{\alpha_3^3 (\alpha_3 \alpha_7-\alpha_4 \alpha_6)}$, $s=\frac{\alpha_3  \alpha_4}{\alpha_4 \alpha_6-\alpha_3 \alpha_7}$, $u=\frac{H}{\alpha_3^2 (\alpha_3 \alpha_7-\alpha_4 \alpha_6)}$ and $v=\frac{\alpha_3^2}{\alpha_3 \alpha_7-\alpha_4 \alpha_6}$, we obtain the representative $\langle \nabla_3+\nabla_7  \rangle$.
                    \item Suppose $\alpha_8=0$ and $\alpha_4 \alpha_6 =\alpha_3 \alpha_7$.
                    \begin{enumerate}
                        \item If $H=0$, by choosing $x=v=1$, $w=-\frac{\alpha_6}{\alpha_3^2}$, $y=\frac{1}{\alpha_3}$, $q=-\frac{\alpha_1}{\alpha_3}$, $r=\frac{ 2 \alpha_1  \alpha_6 - 
 \alpha_2 \alpha_3}{\alpha_3^3}$ and $s=-\frac{\alpha_7}{\alpha_6}$, we obtain the representative $\langle \nabla_3  \rangle$, that we do not consider since it does not satisfy the above condition.
                        \item If  $H\neq 0$, by choosing $x=\frac{\sqrt[4]{H}}{\alpha_3}$, $w=-\frac{\alpha_6}{\sqrt{H}}$, $y=\frac{\alpha_3}{\sqrt{H}}$, $q=-\frac{\alpha_1\sqrt[4]{H}}{\alpha_3^2}$, $r=\frac{ 2 \alpha_1  \alpha_6 - 
 \alpha_2 \alpha_3}{\alpha_3\sqrt{H}}$, $s=-\frac{\alpha_7}{\alpha_6}$ and $v=1$, we obtain the representative $\langle \nabla_3+\nabla_5  \rangle$, that we do not consider since it does not satisfy the above condition.
                    \end{enumerate}
                    \item Suppose $\alpha_8\neq 0$ and $\alpha_4 \alpha_6 + \alpha_6 \alpha_8\neq \alpha_3 \alpha_7$. By choosing $x=\frac{1}{\alpha_6}$, $z=-\frac{\alpha_3}{\alpha_6^2}$, $y=1$, $q=\frac{\alpha_1 \alpha_6^2 \alpha_7-\alpha_3^2 \alpha_5\alpha_7 + 2 \alpha_3 \alpha_5 \alpha_6 (\alpha_4 + \alpha_8) - \alpha_2 \alpha_6^2 (\alpha_4 + \alpha_8)}{\alpha_6^3(\alpha_4 \alpha_6 - \alpha_3 \alpha_7 + \alpha_6 \alpha_8)}$, $r=-\frac{\alpha_5}{\alpha_6}$, $s=-\frac{\alpha_7}{\alpha_8}$, $t=\frac{\alpha_2 \alpha_3 \alpha_6-\alpha_3^2 \alpha_5 - \alpha_1 \alpha_6^2}{\alpha_6^2 (\alpha_4 \alpha_6 - \alpha_3 \alpha_7 + \alpha_6 \alpha_8)}$ and $v=\frac{\alpha_6}{\alpha_8}$, we obtain the family of representatives $\langle \alpha\nabla_4+\nabla_6+\nabla_8  \rangle$.
                    \item Suppose $\alpha_8\neq 0$ and $\alpha_4 \alpha_6 + \alpha_6 \alpha_8= \alpha_3 \alpha_7$.
                    \begin{enumerate}
                        \item If $H\neq 0$, by choosing $x=\frac{\alpha_6}{\sqrt{H}}$, $z=-\frac{\alpha_3}{\sqrt{H}}$, $y=\frac{\sqrt[4]{H}}{\alpha_6}$, $q=\frac{2\alpha_3\alpha_5-\alpha_2 \alpha_6}{\alpha_6\sqrt{H}}$, $r=-\frac{\alpha_5\sqrt[4]{H}}{\alpha_6^2}$, $s=-\frac{\alpha_7\sqrt{H}}{\alpha_6^2\alpha_8}$ and $v=\frac{\sqrt{H}}{\alpha_6\alpha_8}$, we obtain the representative $\langle \nabla_1-\nabla_4+\nabla_6+\nabla_8  \rangle$.
                        \item If $H=0$, by choosing $x=1$, $z=-\frac{\alpha_3}{\alpha_6}$, $y=\frac{1}{\sqrt{\alpha_6}}$, $q=\frac{2\alpha_3\alpha_5-\alpha_2 \alpha_6}{\alpha_6^2}$, $r=-\frac{\alpha_5}{\sqrt{\alpha_6^3}}$, $s=-\frac{\alpha_7}{\alpha_6\alpha_8}$ and $v=\frac{1}{\alpha_8}$, we obtain the representative $\langle -\nabla_4+\nabla_6+\nabla_8  \rangle$.
                    \end{enumerate}
            \end{enumerate}
            
            \item Suppose $\alpha_6=\alpha_9= 0$ and $\alpha_3\neq 0$.
            \begin{enumerate}
                \item If $\alpha_7\alpha_8\neq 0$, by choosing    $x=\alpha_7$, $y=\alpha_8$, $q=-\frac{\alpha_1\alpha_7}{\alpha_3}$, $r=\frac{\alpha_8 (\alpha_4 \alpha_5 - \alpha_2 \alpha_7 + \alpha_5 \alpha_8)}{\alpha_3\alpha_7}$, $s=-\alpha_4\alpha_7$, $q=-\frac{\alpha_5\alpha_8}{\alpha_7}$ and $v=\alpha_3\alpha_7$, we obtain the representative $\langle \nabla_3+\nabla_7+\nabla_8  \rangle$.
                \item If $\alpha_5\alpha_8\neq 0$ and $\alpha_7=0$, by choosing $x=\alpha_5$, $y=\alpha_3\alpha_5$, $q=-\frac{\alpha_1\alpha_5}{\alpha_3}$, $r=-\alpha_2\alpha_5$, $s=-\frac{\alpha_3\alpha_4\alpha_5^2}{\alpha_8}$ and  $v=\frac{\alpha_3^2\alpha_5^2}{\alpha_8}$     , we obtain the representative $\langle \nabla_3+\nabla_5+\nabla_8  \rangle$.
                \item If $\alpha_8\neq 0$ and $\alpha_5=\alpha_7=0$, by choosing $x=1$, $y=\alpha_8$, $q=-\frac{\alpha_1}{\alpha_3}$, $r=-\frac{\alpha_2\alpha_8}{\alpha_3}$, $s=-\alpha_4$ and  $v=\alpha_3$, we obtain the representative $\langle \nabla_3+\nabla_8  \rangle$.
                \item If $\alpha_8= 0$ and $\alpha_7\neq 0$, by choosing $x=\alpha_7$, $y=1$, $q=-\frac{\alpha_1\alpha_7}{\alpha_3}$, $r=\frac{\alpha_4\alpha_5-\alpha_2\alpha_7}{\alpha_3\alpha_7}$, $s=-\alpha_4\alpha_7$, $u=-\frac{\alpha_5}{\alpha_7}$ and  $v=\alpha_3\alpha_7$, we obtain the representative $\langle \nabla_3+\nabla_7  \rangle$.
                \item If $\alpha_7\alpha_8= 0$, by choosing   $v=1$ and $s=-\frac{\alpha_4}{\alpha_3}$, we obtain the representative 
                $\langle  
                \alpha_1\nabla_1+ \alpha_2\nabla_2+ \alpha_3\nabla_3+ \alpha_5\nabla_5  \rangle$, that we do not consider since it does not satisfy the above condition.
            \end{enumerate}
            
            \item If $\alpha_3\alpha_9\neq 0$ or $\alpha_6\alpha_9\neq 0$, by choosing $w=z=v=1$ and $y=-\frac{\alpha_8}{\alpha_9}$, we return to one of the above cases.

             \item If $\alpha_3=\alpha_9= 0$ and $\alpha_6\neq 0$, by choosing $x=y=z=v=1$, we return to the case when $\alpha_3\alpha_6\neq 0$ and $\alpha_9=0$.

        \end{enumerate}
        

        \item Suppose $\alpha_{10}=0$ and $\alpha_{11}\neq 0$. Again, in this case we can not have $\alpha_3=\alpha_6=0$.
            \begin{enumerate}
             \item Suppose $\alpha_6\neq 0$ and $\alpha_3= 0$.
                \begin{enumerate}
                    \item If $\alpha_8\neq \alpha_4$, by choosing $x=\frac{\sqrt{\alpha_{11}}}{\alpha_4-\alpha_8}$, $y=\frac{\sqrt{\alpha_4-\alpha_8}}{\sqrt[4]{\alpha_{11}\alpha_{6}^2}}$, $q=\frac{\alpha_7 \alpha_8 -\alpha_{11} \alpha_2 + \alpha_4 \alpha_9}{\alpha_{6}\sqrt{\alpha_{11}}(\alpha_4-\alpha_8)}$, $r=\frac{\sqrt{\alpha_4-\alpha_8}(\alpha_7 \alpha_9-\alpha_{11} \alpha_5)}{\sqrt[4]{\alpha_{11}^5\alpha_6^6}}$, $s=\frac{  \alpha_9-\alpha_7}{\alpha_{6}\sqrt{\alpha_{11}}}$, $t=\frac{\alpha_8}{\sqrt{\alpha_{11}}(\alpha_8-\alpha_4)}$, $u=\frac{\alpha_9\sqrt{\alpha_4-\alpha_8}}{\sqrt[4]{\alpha_6^2\alpha_{11}^5}}$ and $v=\frac{1}{\sqrt{\alpha_{11}}}$, we obtain the family of representatives $\langle \alpha\nabla_1+\nabla_4+\nabla_6+\nabla_{11}  \rangle$.
                    \item If $\alpha_8= \alpha_4$ and $\alpha_1\alpha_{11}\neq\alpha_4^2$, by choosing $x=\frac{\sqrt{\alpha_{11}}}{\sqrt{\alpha_1\alpha_{11}-\alpha_4^2}}$, $y=\frac{\sqrt[4]{\alpha_1\alpha_{11}-\alpha_4^2}}{\sqrt[4]{\alpha_{11}\alpha_6^2}}$, $q=\frac{\alpha_4( \alpha_7 +  \alpha_9)-\alpha_{11} \alpha_2}{\alpha_{6}\sqrt{\alpha_{11}(\alpha_1\alpha_{11}-\alpha_4^2)}}$, $r=\frac{(\alpha_7 \alpha_9-\alpha_{11} \alpha_5)\sqrt[4]{\alpha_1\alpha_{11}-\alpha_4^2}}{\sqrt[4]{\alpha_{11}^5\alpha_6^6}}$, $s=\frac{\alpha_9-\alpha_7}{\alpha_{6}\sqrt{\alpha_{11}}}$, $t=-\frac{\alpha_4}{\sqrt{\alpha_{11}(\alpha_1\alpha_{11}-\alpha_4^2)}}$, $u=-\frac{\alpha_9\sqrt[4]{\alpha_1\alpha_{11}-\alpha_4^2}}{\sqrt[4]{\alpha_{11}^5\alpha_6^2}}$ and $v=\frac{1}{\sqrt{\alpha_{11}}}$   , we obtain the  representative $\langle \nabla_1+\nabla_6+\nabla_{11}  \rangle$.
                    \item If $\alpha_8= \alpha_4$ and $\alpha_1\alpha_{11}=\alpha_4^2$, by choosing $x=\frac{1}{\alpha_6}$, $y=1$, $q=\frac{\alpha_4( \alpha_7 +  \alpha_9)-\alpha_{11} \alpha_2}{\alpha_{6}^2\alpha_{11}}$, $r=\frac{ \alpha_7 \alpha_9-\alpha_{11} \alpha_5}{\alpha_{11}\alpha_6}$, $s=\frac{  \alpha_9-\alpha_7}{\alpha_{6}\sqrt{\alpha_{11}}}$, $t=-\frac{\alpha_4}{\alpha_{11}\alpha_6}$, $u=-\frac{\alpha_9}{\alpha_{11}}$ and $v=\frac{1}{\sqrt{\alpha_{11}}}$, we obtain the  representative $\langle \nabla_6+\nabla_{11}  \rangle$.
                \end{enumerate}
                \item  If $\alpha_3\alpha_6\neq 0$, by choosing $x=-\frac{\alpha_6}{\alpha_3}$ and $z=y=v=1$, we return to the case $\alpha_6\neq 0$ and $\alpha_3= 0$. 
            \end{enumerate}
             \item Suppose $\alpha_{10}\neq 0$. Note that, by choosing $x=y=v=1$, $q=\frac{\alpha_{11}\alpha_3 - \alpha_{10} \alpha_8}{\alpha_{10}^2}$, $r=\frac{\alpha_{11}\alpha_6 - \alpha_{10} \alpha_9}{\alpha_{10}^2}$, $s=-\frac{\alpha_{11}}{\alpha_{10}}$, $t=-\frac{\alpha_3}{\alpha_{10}}$ and  $u=-\frac{\alpha_6}{\alpha_{10}}$, we can suppose $\alpha_3=\alpha_6=\alpha_8=\alpha_9=\alpha_{11}=0$. 
        \begin{enumerate}
           \item Suppose $\alpha_7\neq 0$. By choosing $x=y=v=1$ and $z=-\frac{\alpha_4}{\alpha_7}$ we can suppose  $\alpha_4=0$. Therefore, we have the following cases:
           \begin{enumerate}


\item If $\alpha_1(\alpha_2^2-4\alpha_1\alpha_5)\neq 0$, by choosing $x=\frac{\alpha_7}{\alpha_{10}}$, $w=-\frac{\alpha_2\alpha_7}{\alpha_{10} \sqrt{4 \alpha_1 \alpha_5-\alpha_2^2}}$, $y=\frac{2\alpha_1\alpha_7}{\alpha_{10} \sqrt{4 \alpha_1 \alpha_5-\alpha_2^2}}$ and $v=\frac{\sqrt{4 \alpha_1 \alpha_5-\alpha_2^2}}{2\alpha_{10} }$, we have the representative $\langle \nabla_1+\nabla_5+\nabla_7+\nabla_{10}  \rangle$. 
               \item If $\alpha_1\neq 0$ and $\alpha_2^2=4\alpha_1\alpha_5$, by choosing $x=\frac{\alpha_7}{\alpha_{10}}$, $w=-\frac{\alpha_2}{\alpha_{1}}$, $y=1$ and $v=\frac{\alpha_1 \alpha_7}{\alpha_{10}^2 }$, we have the representative $\langle \nabla_1+\nabla_7+\nabla_{10}  \rangle$. 
               \item If $\alpha_1= 0$ and $\alpha_2\neq 0$, by choosing $x=\frac{\alpha_7}{\alpha_{10}}$, $w=-\frac{\alpha_5}{\alpha_2}$, $y=1$ and $v=\frac{\alpha_2}{\alpha_{10}}$, we have the representative $\langle \nabla_2+\nabla_7+\nabla_{10}  \rangle$. 
               \item  If $\alpha_1=\alpha_2= 0$, we obtain the representatives $\langle \nabla_5+\nabla_7+\nabla_{10}  \rangle$ and $\langle \nabla_7+\nabla_{10}  \rangle$. 
           \end{enumerate}
           \item Suppose $\alpha_7=0$ and $\alpha_4\neq 0$. By choosing $w=z=v=1$, we return to the case $\alpha_7\neq 0$.
           \item Suppose $\alpha_7=\alpha_4=0$. If $\alpha_5\neq0$, then by choosing $w=z=v=1$ and $y=\frac{\sqrt{\alpha_2^2 - 4 \alpha_1 \alpha_5}-\alpha_2}{2 \alpha_5}$, we can suppose $\alpha_5=0$.
           \begin{enumerate}
                \item If $\alpha_2\neq 0$, by choosing  $x=y=1$, $z=-\frac{\alpha_1}{\alpha_2}$ and $v=\frac{\alpha_2}{\alpha_{10}}$, we obtain the representative $\langle \nabla_2+\nabla_{10}  \rangle$.
              \item If $\alpha_2=0$, we have the representatives $\langle \nabla_1+\nabla_{10}  \rangle$ and $\langle \nabla_{10}  \rangle$.
           \end{enumerate}
        \end{enumerate}
    \end{enumerate}
   
Summarizing, we have the following distinct orbits
\begin{center}
    $\langle \nabla_3+\nabla_7  \rangle,$
    $\langle  \nabla_3+\nabla_8  \rangle,$
    $\langle  \nabla_3+\nabla_5+\nabla_8  \rangle,$
    $\langle \nabla_3+\nabla_7+\nabla_8  \rangle,$
    $\langle \alpha\nabla_4+\nabla_6+\nabla_8  \rangle,$
    $\langle \nabla_1-\nabla_4+\nabla_6+\nabla_8  \rangle,$
    $\langle \nabla_6+\nabla_{11}  \rangle,$
    $\langle \nabla_1+\nabla_6+\nabla_{11}  \rangle,$
    $\langle \alpha\nabla_1+\nabla_4+\nabla_6+\nabla_{11}  \rangle,$
    $\langle \nabla_{10}  \rangle,$
    $\langle \nabla_1+\nabla_{10}  \rangle,$
    $\langle \nabla_2+\nabla_{10}  \rangle,$
    $\langle \nabla_7+\nabla_{10}  \rangle,$
    $\langle \nabla_1+\nabla_7+\nabla_{10}  \rangle,$
    $\langle \nabla_2+\nabla_7+\nabla_{10}  \rangle,$
    $\langle \nabla_5+\nabla_7+\nabla_{10}  \rangle,$
    $\langle \nabla_1+\nabla_5+\nabla_7+\nabla_{10}  \rangle,$
\end{center}
which give the following new algebras:

\begin{longtable}{llllllll}
$\mathcal{Z}_{06}$ &$:$& $ e_1e_2=e_3$ & $e_1 e_3 =e_5$ & $e_2e_1 =-e_3$ & $e_2 e_4 =e_5$ \\ 
\hline
$\mathcal{Z}_{07}$ &$:$& $ e_1e_2=e_3$ & $e_1e_3 =e_5$ & $e_2e_1 =-e_3$ & $e_4e_1=e_5$  \\ 
\hline
$\mathcal{Z}_{08}$ &$:$& $ e_1e_2=e_3$ & $e_1e_3 =e_5$ & $e_2e_1 =-e_3$ & $e_2e_2=e_5$ & $e_4e_1=e_5$  \\ 
\hline
$\mathcal{Z}_{09}$ &$:$& $ e_1e_2=e_3$ & $e_1 e_3 =e_5$ & $e_2e_1 =-e_3$ & $e_2 e_4 =e_5$ & $e_4e_1=e_5$ \\ 
\hline
$\mathcal{Z}_{10}^\alpha$ &$:$& $ e_1e_2=e_3$ & $e_1 e_4 =\alpha e_5$ & $e_2e_1 =-e_3$ & $e_2 e_3 =e_5$ & $e_4e_1=e_5$ \\ 
\hline
$\mathcal{Z}_{11}$ &$:$& $e_1e_1=e_5$ & $e_1e_2=e_3$ & $e_1 e_4 =-e_5$ & $e_2e_1 =-e_3$ & $e_2 e_3 =e_5$  & $e_4e_1=e_5$ \\ 
\hline
$\mathcal{Z}_{12}$ &$:$& $ e_1e_2=e_3$ & $e_2e_1 =-e_3$ & $e_2 e_3 =e_5$ & $e_4e_4=e_5$ \\ 
\hline
$\mathcal{Z}_{13}$ &$:$& $e_1e_1=e_5$ & $e_1e_2=e_3$ & $e_2e_1 =-e_3$ & $e_2 e_3 =e_5$ & $e_4e_4=e_5$ \\ 
\hline
$\mathcal{Z}_{14}^\alpha$ &$:$& $e_1e_1=\alpha e_5$ & $e_1e_2=e_3$ & $e_1e_4=e_5$ & $e_2e_1 =-e_3$ & $e_2 e_3 =e_5$  & $e_4e_4=e_5$ \\ 
\hline
$\mathcal{Z}_{15}$ &$:$& $e_1e_2=e_3$ & $e_2e_1 =-e_3$ & $e_4e_3=e_5$ \\ 
\hline
$\mathcal{Z}_{16}$ &$:$& $e_1e_1=e_5$ & $e_1e_2=e_3$ & $e_2e_1 =-e_3$ & $e_4e_3=e_5$ \\ 
\hline
$\mathcal{Z}_{17}$ &$:$& $e_1e_2=e_3+e_5$ & $e_2e_1 =-e_3$ & $e_4e_3=e_5$ \\ 
\hline
$\mathcal{Z}_{18}$ &$:$& $e_1e_2=e_3$ & $e_2e_1 =-e_3$ & $e_2e_4=e_5$ & $e_4e_3=e_5$ \\ 
\hline
$\mathcal{Z}_{19}$ &$:$& $e_1e_1=e_5$ & $e_1e_2=e_3$ & $e_2e_1 =-e_3$ & $e_2e_4=e_5$ & $e_4e_3=e_5$ \\ 
\hline
$\mathcal{Z}_{20}$ &$:$& $e_1e_2=e_3+e_5$ & $e_2e_1 =-e_3$ & $e_2e_4=e_5$ & $e_4e_3=e_5$ \\ 
\hline
$\mathcal{Z}_{21}$ &$:$& $e_1e_2=e_3$ & $e_2e_1 =-e_3$ & $e_2e_2=e_5$ & $e_2e_4=e_5$ & $e_4e_3=e_5$ \\ 
\hline
$\mathcal{Z}_{22}$ &$:$& $e_1e_1=e_5$ & $e_1e_2=e_3$ & $e_2e_1 =-e_3$ & $e_2e_2=e_5$ & $e_2e_4=e_5$  & $e_4e_3=e_5$ \\ 
\end{longtable}
\medskip

\subsubsection{Central extensions of ${\mathfrak N}_{07}$}

Let us use the following notations:
\begin{longtable}{lll}
$\nabla_1 =[\Delta_{11}],$& $\nabla_2 = [\Delta_{22}],$ & $ \nabla_3 = [\Delta_{13}-\Delta_{14}-\Delta_{23}+\Delta_{24}-2 \Delta_{32}].$
\end{longtable}
Take $\theta=\sum\limits_{i=1}^{3}\alpha_i\nabla_i\in {\rm H}^2({\mathfrak N}_{07}).$ 
	The automorphism group of ${\mathfrak N}_{07}$ is generated by invertible matrices of the form
	
\begin{center}$	\phi=
	\begin{pmatrix}
	x &  0 &  0 & 0\\
	0 &  x  &  0 & 0\\
	r &  s  &  x^2 & 0 \\
	t &  u  &  0 & x^2 \\
	\end{pmatrix}.$ \end{center}

	Since
	$$
	\phi^T\begin{pmatrix}
    \alpha_1 &  0 & \alpha_3 & -\alpha_3\\
	0  &  \alpha_2 & -\alpha_3 & \alpha_3 \\
	0 &  -2\alpha_3  & 0 & 0\\
	0 & 0 & 0 & 0
	\end{pmatrix} \phi=	\begin{pmatrix}
\alpha_1^\ast &  -\alpha^\ast & \alpha_3^\ast & -\alpha_3^\ast\\
	\alpha^{\ast\ast}  &  \alpha^\ast+\alpha_2^\ast & -\alpha_3^\ast & \alpha_3^\ast \\
	0 &  -2\alpha_3^\ast  & 0 & 0\\
	0 & 0 & 0 & 0
	\end{pmatrix},
	$$	
we have that the action of ${\rm Aut} ({\mathfrak N}_{07})$ on the subspace
$\langle \sum\limits_{i=1}^3\alpha_i\nabla_i  \rangle$
is given by
$\langle \sum\limits_{i=1}^3\alpha_i^{*}\nabla_i\rangle,$
where

\begin{longtable}{lll}
$\alpha^*_1=    \alpha_3 (r-t) x + \alpha_1x^2 ,$ &
$\alpha^*_2=   \alpha_2x^2 - 2\alpha_3 (r+s) x,$ &
$\alpha^*_3=  \alpha_3 x^3.$
\end{longtable}

Since $\alpha_3\neq 0$, we suppose that $\alpha_3=1$. Then, by choosing $x=1$, $r=\frac{\alpha_2}{2}$ and $t=\frac{2\alpha_1+\alpha_2}{2}$, we obtain the representative $\langle \nabla_3 \rangle$,
which gives the following new algebra:

\begin{longtable}{llllllll}
$\mathcal{Z}_{23}$ &$:$& $ e_1e_2=e_3$ & $e_1 e_3 =e_5$ & $e_1e_4 =-e_5$ & $e_2e_1 =e_4$ \\
&& $e_2 e_2 =-e_3$  & $e_2e_3=-e_5$ & $e_2e_4=e_5$ & $e_3e_2=-2e_5$
\end{longtable}

\subsubsection{Central extensions of ${\rm H}^2({\mathfrak N}_{08}^{1})$}

Let us use the following notations:
\begin{longtable}{ll}
$\nabla_1 =[\Delta_{11}],$ & $ \nabla_3 = [\Delta_{ 2 3}-\Delta_{ 1 3}-2\Delta_{ 3 1}+\Delta_{ 3 2}+\Delta_{4 1}],$\\
$\nabla_2 = [\Delta_{12}],$ & $ \nabla_4 = [\Delta_{ 2 4} - \Delta_{ 1 4}-\Delta_{ 3 2} - \Delta_{ 4 1}+2\Delta_{4 2}].$
\end{longtable}
Take $\theta=\sum\limits_{i=1}^{4}\alpha_i\nabla_i\in {\rm H}^2({\mathfrak N}_{08}^{1}).$ 
	The automorphism group of ${\mathfrak N}_{08}^{1}$ is generated by invertible matrices of the form
	\begin{center}
$	\phi=
	\begin{pmatrix}
	a &  x &  0 & 0\\
	a+x-y &  y  &  0 & 0\\
	z &  t  &  a(y-x) & x(y-x) \\
	u &  v  &  (a+x-y)(y-x) & y(y-x) \\
	\end{pmatrix}.$\end{center}
	

Since
	$$
	\phi^T\begin{pmatrix}
\alpha_1 &  \alpha_2 & -\alpha_3 & -\alpha_4\\
	0  &  0 & \alpha_3 & \alpha_4 \\
	-2\alpha_3 &  \alpha_3-\alpha_4  & 0 & 0\\
	\alpha_3-\alpha_4 & 2\alpha_4 & 0 & 0
	\end{pmatrix} \phi=	\begin{pmatrix}
\alpha^\ast+\alpha_1^\ast &  \alpha^{\ast\ast}+\alpha_2^\ast & -\alpha_3^\ast & -\alpha_4^\ast\\
	-\alpha^\ast  &  -\alpha^{\ast\ast} & \alpha_3^\ast & \alpha_4^\ast \\
	-2\alpha_3^\ast &  \alpha_3^\ast-\alpha_4^\ast  & 0 & 0\\
	\alpha_3^\ast-\alpha_4^\ast & 2\alpha_4^\ast & 0 & 0
	\end{pmatrix},
	$$
	 we have that the action of ${\rm Aut} ({\mathfrak N}_{08}^{1})$ on the subspace
$\langle \sum\limits_{i=1}^4\alpha_i\nabla_i  \rangle$
is given by
$\langle \sum\limits_{i=1}^4\alpha_i^{*}\nabla_i\rangle,$
where

\begin{longtable}{lll}
$\alpha^*_1$& $=$ &$  (\alpha_1 + \alpha_2)a^2 + (\alpha_1 x + 2 \alpha_2 x - \alpha_2 y + \alpha_3   (u + v-t  - z)+ \alpha_4 (  u + v -t - z))a +$ \\
   & &\multicolumn{1}{r}{$    (\alpha_2 x + \alpha_3 (t + z) - \alpha_4 (t - 2 u - 2 v + z))(x - y), $} \\
$\alpha^*_2$& $=$ &$ \alpha_1 (x^2+ax)+ \alpha_2 (x y + a  y) +\alpha_4(2  u y + 2  v y- y z- u x -  v x  -  t y )+  $\\
 && \multicolumn{1}{r}{$  \alpha_3 (  u x + v x + t y-2 t x  - 2 x z + y z), $} \\
$\alpha^*_3$& $=$ &$((\alpha_3 + \alpha_4)a  + \alpha_4 (x - y)) (x - y)^2, $ \\
$\alpha_4^*$& $=$ &$ (\alpha_3 x + \alpha_4 y)(x - y)^2 $.
\end{longtable}

Note that we can not have $(\alpha_3,\alpha_4)=(0,0)$ and by the action of ${\rm Aut} ({\mathfrak N}_{08}^{1})$, we need to consider only the following two cases:

\begin{enumerate}
    \item If $\alpha_4=0$ and $\alpha_3\neq 0$, by choosing $x=1$, $t=-\frac{\alpha_2}{\alpha_3}$ and $u=-\frac{\alpha_1+2\alpha_2}{\alpha_3}$, we obtain the representative $\langle \nabla_4 \rangle$.
    \item Suppose $\alpha_3\alpha_4\neq 0$.
    \begin{enumerate}
        \item If $\alpha_3\neq-\alpha_4$, by choosing $a=-\frac{\alpha_4}{\sqrt[3]{\alpha_3^4}}$, $x=\frac{3\alpha_3+\alpha_4}{2\sqrt[3]{\alpha_3^4}}$, 
        $y=\frac{2}{\sqrt[3]{\alpha_3}}$, $z=\frac{2\alpha_1\alpha_4-\alpha_2(\alpha_3-\alpha_4)}{\sqrt[3]{\alpha_3}(\alpha_3+\alpha_4)^2}$, $t=\frac{2(2\alpha_1\alpha_4-\alpha_2(\alpha_3-\alpha_4)}{\sqrt[3]{\alpha_3}(\alpha_3+\alpha_4)^2}$, $u=\frac{(2\alpha_2\alpha_3+\alpha_1(\alpha_3-\alpha_4))\alpha_4}{2\sqrt[3]{\alpha_3^4}(\alpha_3+\alpha_4)^2}$ and $v=\frac{(\alpha_1(\alpha_3-\alpha_4)-2\alpha_2\alpha_3)(3\alpha_3+\alpha_4)}{2\sqrt[3]{\alpha_3^4}(\alpha_3+\alpha_4)^2}$, we obtain the representative $\langle \nabla_4 \rangle$.
        \item Suppose $\alpha_3=-\alpha_4$.
        \begin{enumerate}
            \item If $\alpha_1\neq-\alpha_2$, by choosing $x=\frac{\alpha_1+\alpha_2}{\alpha_4}$ and  $v=\frac{\alpha_1(\alpha_1+\alpha_2)}{2\alpha_4^2}$, we obtain the representative $\langle \nabla_1+\nabla_3 - \nabla_4 \rangle$.
            \item If $\alpha_1=-\alpha_2$, by choosing $x=-1$ and  $v=\frac{\alpha_2}{2\alpha_4}$, we obtain the representative $\langle -\nabla_3 +\nabla_4 \rangle$.
        \end{enumerate}
    \end{enumerate}
\end{enumerate}

Summarizing, we have the distinct orbits 
\begin{center}
    $\langle \nabla_4 \rangle,$
    $\langle \nabla_1+\nabla_3 - \nabla_4 \rangle,$
    $\langle \nabla_3 -\nabla_4 \rangle$,
\end{center}
which give the following new algebras:

\begin{longtable}{llllllllll}
$\mathcal{Z}_{24}$ &$:$& $e_1e_1=e_3$ & $e_1 e_2 =e_4$ & $e_1e_4=-e_5$ & $e_2e_1 =-e_3$ & $e_2e_2 =-e_4$ \\
&& $e_2 e_4 =e_5$ & $e_3 e_2 =-e_5$ & $e_4e_1=-e_5$ & $e_4e_2=2 e_5$ \\ 
\hline
$\mathcal{Z}_{25}$ &$:$& $e_1e_1=e_3+e_5$ & $e_1 e_2 =e_4$ & $e_1e_3=-e_5$ & $e_1 e_4 =e_5$ & $e_2e_1 =-e_3$  & $e_2e_2 =-e_4$ \\
&& $e_2 e_3 =e_5$ & $e_2 e_4 =-e_5$ & $e_3 e_1 =-2e_5$ & $e_3e_2=2e_5$  & $e_4e_1=2e_5$ & $e_4e_2=-2e_5$ \\ 
\hline
$\mathcal{Z}_{26}$ &$:$& $e_1e_1=e_3$ & $e_1 e_2 =e_4$ & $e_1e_3=-e_5$ & $e_1 e_4 =e_5$  & $e_2e_1 =-e_3$  & $e_2e_2 =-e_4$ \\
&& $e_2 e_3 =e_5$ & $e_2 e_4 =-e_5$ & $e_3 e_1 =-2e_5$ & $e_3e_2=2e_5$  & $e_4e_1=2e_5$ & $e_4e_2=-2e_5$ \\ 
\end{longtable}

\subsubsection{Central extensions of ${\mathfrak N}_{12}$}

Let us use the following notations:
\begin{longtable}{llll}
$\nabla_1 =[\Delta_{11}],$& 
$\nabla_2 = [\Delta_{22}],$ & 
$\nabla_3 = [\Delta_{14}-\Delta_{13}],$ &
$\nabla_4 = [\Delta_{24}-\Delta_{23}].$ 
\end{longtable}
Take $\theta=\sum\limits_{i=1}^4\alpha_i\nabla_i\in {\rm H^2}({\mathfrak N}_{12}).$ 
	The automorphism group of ${\mathfrak N}_{12}$ is generated by invertible matrices of the form
\begin{center}$	\phi_1=
	\begin{pmatrix}
	0 &  x &  0 & 0\\
	y &  0  &  0 & 0\\
	z &  t  &  0 & xy \\
	u &  v  &  xy & 0 \\
	\end{pmatrix}$ and $	\phi_2=
	\begin{pmatrix}
	x &    0  &  0 & 0\\
	0 &  y  &  0 & 0\\
	z &  t  &  xy & 0 \\
	u &  v  &  0 & xy \\
	\end{pmatrix}.$\end{center}
	Since
	$$
	\phi^T_1\begin{pmatrix}
\alpha_1 &  0 & -\alpha_3 & \alpha_3\\
	0  &  \alpha_2 & -\alpha_4 & \alpha_4 \\
	0&  0  & 0 & 0\\
	0 & 0 & 0 & 0
	\end{pmatrix} \phi_1=	\begin{pmatrix}
\alpha_1^* &  \alpha^* & -\alpha_3^* & \alpha_3^*\\
	\beta^*  &  \alpha_2^* & -\alpha_4^* & \alpha_4^* \\
	0 &  0 & 0 & 0\\
	0 & 0 & 0 & 0
	\end{pmatrix},
	$$
	 we have that the action of ${\rm Aut} ({\mathfrak N}_{12})$ on the subspace
$\langle \sum\limits_{i=1}^4\alpha_i\nabla_i  \rangle$
is given by
$\langle \sum\limits_{i=1}^4\alpha_i^{*}\nabla_i\rangle,$
where
\begin{longtable}{llll}
$\alpha^*_1=   ( \alpha_2y + \alpha_4u-\alpha_4z)y,$ &
$\alpha^*_2=   (\alpha_1x-\alpha_3t+\alpha_3v)x,$ &
$\alpha^*_3=  -\alpha_4xy^2,$ &
$\alpha_4^*=  -\alpha_3x^2y.$\\
\end{longtable}

Since ${\rm H}^2({\mathfrak N}_{12})=
\Big\langle 
[\Delta_{ 1 1}], [\Delta_{ 2 2}],[\Delta_{ 1 4}-\Delta_{ 1 3}], [\Delta_{ 2 4}-\Delta_{ 2 3}]
\Big\rangle $ and we are interested only in new algebras, we must have $(\alpha_3,\alpha_4)\neq (0,0)$.
Then 
\begin{enumerate}
    \item if $\alpha_4\alpha_3 \neq 0$, then by choosing 
    $x=\alpha_4$, $y=\alpha_3$, $z=\frac{\alpha_2\alpha_3}{\alpha_4}$ and $t=\frac{\alpha_1 \alpha_4}{\alpha_3}$,         we have the representative $\langle \nabla_3+ \nabla_4 \rangle.$
    
    \item if $\alpha_4\alpha_1\neq 0$ and $\alpha_3=0$, then by choosing 
    $x=-\alpha_1$, $y=\frac{\alpha_1}{\sqrt{\alpha_4}}$ and $z=\frac{\alpha_2y}{\alpha_4}$,         we have the representative $\langle \nabla_2+ \nabla_3 \rangle.$
    
    \item if $\alpha_4\neq 0$ and $\alpha_1=\alpha_3=0$, then by choosing 
    $x=y=1$ and $z=\frac{\alpha_2}{\alpha_4}$,         we have the representative $\langle \nabla_3 \rangle.$
    
   \item if $\alpha_3\neq 0$ and $\alpha_4=0$, by applying $\phi_2$ we obtain the case $\alpha_4^*=0$ which has been considered above.
\end{enumerate}
Summarizing, we have the following distinct orbits 
\begin{center} 
$\langle \nabla_3+\nabla_4 \rangle,$ \  
$\langle  \nabla_2+ \nabla_3 \rangle,$ \  
$\langle \nabla_3 \rangle$,
\end{center}
which give the following new algebras:

\begin{longtable}{llllllll}
$\mathcal{Z}_{27}$ &$:$& $ e_1e_2=e_3$ & $e_1 e_3 =-e_5$ & $e_1 e_4 =e_5$ & $e_2e_1=e_4$ & $e_2e_3 =-e_5$ & $e_2e_4=e_5$\\
\hline
$\mathcal{Z}_{28}$ &$:$& $ e_1e_2=e_3$ & $e_1 e_3 =-e_5$ & $e_1 e_4 =e_5$ & $e_2e_1=e_4$ & $e_2e_2 =e_5$ \\
\hline
$\mathcal{Z}_{29}$ &$:$& $ e_1e_2=e_3$ & $e_1 e_3 =-e_5$ & $e_1 e_4 =e_5$ & $e_2e_1=e_4$ &   \\
\end{longtable}

\subsubsection{Central extensions of ${\mathfrak N}_{14}^{\alpha\neq-1}$}

Let us use the following notations:
\begin{longtable}{llll}
$\nabla_1 =[\Delta_{11}],$& $\nabla_2 = [\Delta_{21}],$ &
$\nabla_3=[\Delta_{23}+2\Delta_{32}],$ & 
$\nabla_4 = [2\alpha\Delta_{ 2 4}+(\alpha+1)\left(\Delta_{1 3}+2\alpha\Delta_{3 1}+2\Delta_{4 2}\right)].$ 
\end{longtable}
Take $\theta=\sum\limits_{i=1}^4\alpha_i\nabla_i\in {\rm H^2}({\mathfrak N}_{14}^{\alpha\neq-1}).$ 
	The automorphism group of ${\mathfrak N}_{14}^{\alpha\neq-1}$ consists of invertible matrices of the form

\begin{center}$	\phi=
	\begin{pmatrix}
	x &  y &  0 & 0\\
	0 &  z  &  0 & 0\\
	t &  u  &  z^2 & 0 \\
	v & w  &  yz(\alpha+1) & xz \\
	\end{pmatrix}$.\end{center}
	Since
{\tiny	$$
	\phi^T\begin{pmatrix}
\alpha_1 &  0 & (\alpha+1)\alpha_4 & 0\\
\alpha_2  & 0 & \alpha_3 & 2\alpha\alpha_4 \\
2\alpha(\alpha+1)\alpha_4 &  2\alpha_3  & 0 & 0\\
	0 & 2(\alpha+1)\alpha_4 & 0 & 0
	\end{pmatrix} \phi=	\begin{pmatrix}
\alpha_1^* &  \beta^* & (\alpha+1)\alpha_4^* & 0\\
\alpha_2^*+\alpha\beta^*  & \gamma^* & \alpha_3^* & 2\alpha\alpha_4^* \\
2\alpha(\alpha+1)\alpha_4^* &  2\alpha_3^*  & 0 & 0\\
	0 & 2(\alpha+1)\alpha_4^* & 0 & 0
	\end{pmatrix},
	$$}
	 we have that the action of ${\rm Aut} ({\mathfrak N}_{14}^{\alpha\neq-1})$ on the subspace
$\langle \sum\limits_{i=1}^4\alpha_i\nabla_i  \rangle$
is given by
$\langle \sum\limits_{i=1}^4\alpha_i^{*}\nabla_i\rangle,$
where
\begin{longtable}{lcl}
$\alpha^*_1$&$=$&$  (\alpha_1x+(2\alpha^2+3\alpha+1)\alpha_4t)x,$ \\
$\alpha^*_2$&$=$&$   (\alpha+1)(1-2\alpha^2)\alpha_4ty+\alpha(\alpha+1)\alpha_4ux+$\\
&& \multicolumn{1}{r}{$(1-\alpha)\alpha_1xy-2\alpha^2\alpha_4vz+\alpha_2xz+(1-2\alpha)\alpha_3tz,$}\\
$\alpha^*_3$&$=$&$  (\alpha_3z+(2\alpha^2+3\alpha+1)\alpha_4y)z^2,$\\
$\alpha_4^*$&$=$&$  \alpha_4xz^2.$\\
\end{longtable}

Since ${\rm H}^2({\mathfrak N}_{14}^{\alpha\neq-1})=
\Big\langle 
[\Delta_{ 1 1}], [\Delta_{ 2 1}],[\Delta_{ 2 3}+2\Delta_{ 3 2}], [2\alpha\Delta_{ 2 4}+(\alpha+1)\left(\Delta_{1 3}+2\alpha\Delta_{3 1}+2\Delta_{4 2}\right)]
\Big\rangle $ and we are interested only in new algebras, we must have $\alpha_4\neq 0$.
Then

\begin{enumerate}
    \item if $\alpha\neq0,-\frac{1}{2}$, 
    then by choosing 
    $x=z=-\alpha_4(2\alpha^2+3\alpha+1)$, $y=\alpha_3$, $t=\alpha_1$ and $v=-\frac{\alpha_2\alpha_4(4\alpha^3+8\alpha^2+5\alpha+1)+\alpha_1\alpha_3(4\alpha^2-\alpha-1)}{2\alpha^2(2\alpha+1)\alpha_4}$,         we have the representative $\langle \nabla_4 \rangle.$
    
    \item if $\alpha=-\frac{1}{2}$, 
    \begin{enumerate}
        \item Suppose $\alpha_3= 0$.
        \begin{enumerate}
            \item If $\alpha_1=0$, then by choosing $x=z=1$ and $u=\frac{4\alpha_2}{\alpha_4}$, we obtain the representative $\langle \nabla_4 \rangle.$
            \item If $\alpha_1\neq 0$, then by choosing $x=\alpha_1\alpha_4$, $z=\alpha_1$ and $u=\frac{4\alpha_1\alpha_2}{\alpha_4}$, we obtain the representative $\langle \nabla_1 + \nabla_4 \rangle.$
        \end{enumerate}
      
  \item Suppose $\alpha_3\neq 0.$

         \begin{enumerate}
            \item If $\alpha_1=0$, then by choosing $x=\alpha_3$, $z=\alpha_4$ and $u=4\alpha_2$, we obtain the representative $\langle \nabla_3 + \nabla_4 \rangle.$
       \item If $\alpha_1\neq 0$, then by choosing 
     $x=\frac{\alpha_1\alpha_3^2}{\alpha_4^3}$, 
     $z=\frac{\alpha_1\alpha_3}{\alpha_4^2}$ and 
     $u=\frac{4\alpha_1\alpha_2\alpha_3}{\alpha_4^3}$, we obtain the representative $\langle \nabla_1+\nabla_3 + \nabla_4 \rangle.$
        \end{enumerate}
   \end{enumerate}
    
    \item if $\alpha=0$ and $\alpha_2\alpha_4-\alpha_1\alpha_3\neq 0$, then by choosing 
    $x=\alpha_4$, $y=\frac{\alpha_3(\alpha_1\alpha_3-\alpha_2\alpha_4)}{\alpha_4^3}$, $z=\frac{\alpha_2\alpha_4-\alpha_1\alpha_3}{\alpha_4^2}$ and $t=-\alpha_1$,         we have the representative $\langle \nabla_2+\nabla_4 \rangle.$
    
   \item if $\alpha=0$ and $\alpha_2\alpha_4-\alpha_1\alpha_3=0$, then by choosing 
    $x=z=\alpha_4$, $y=-\alpha_3$ and $t=-\alpha_1$,         we have the representative $\langle \nabla_4 \rangle.$
\end{enumerate}
Summarizing, we have 
\begin{center}$\langle \nabla_4\rangle_{\alpha\neq -1}$,
$\langle \nabla_2+\nabla_4\rangle_{\alpha=0}$,
$\langle \nabla_1+\nabla_4\rangle_{\alpha=-\frac{1}{2}}$,
$\langle \nabla_3+\nabla_4\rangle_{\alpha=-\frac{1}{2}}$,
$\langle \nabla_1+\nabla_3+\nabla_4\rangle_{\alpha=-\frac{1}{2}}$,
\end{center}
which give the following new algebras:

\begin{longtable}{llllllll}
$\mathcal{Z}_{30}^{\alpha\neq-1}$ &$:$& $ e_1e_2=e_4$ & $e_1 e_3 =(\alpha+1)e_5$ & $e_2 e_1 =\alpha e_4$ & $e_2e_2=e_3$ \\
&& $e_2e_4 =2\alpha e_5$  
 & 
$e_3e_1=2\alpha(\alpha+1)e_5$ & 
\multicolumn{1}{l}{$e_4e_2=2(\alpha+1)e_5$}\\
\hline

$\mathcal{Z}_{31}$ &$:$& $ e_1e_2=e_4$ & $e_1 e_3 =e_5$ & $e_2 e_1 =e_5$ & $e_2e_2=e_3$ &  $e_4e_2=2e_5$ \\
\hline

$\mathcal{Z}_{32}$ &$:$& $e_1e_1=e_5$ & $ e_1e_2=e_4$ & $e_1 e_3 =\frac{1}{2}e_5$ & $e_2 e_1 =-\frac{1}{2}e_4$ \\
&& $e_2e_2=e_3$ & $e_2e_4 =-e_5$ & $e_3e_1=-\frac{1}{2}e_5$ & $e_4e_2=e_5$\\
\hline

$\mathcal{Z}_{33}$ &$:$& $ e_1e_2=e_4$ & $e_1 e_3 =\frac{1}{2}e_5$ & $e_2 e_1 =-\frac{1}{2}e_4$ & $e_2e_2=e_3$ & $e_2e_3=e_5$\\
&& $e_2e_4 =-e_5$ & $e_3e_1=-\frac{1}{2}e_5$ & $e_3e_2=2e_5$ & $e_4e_2=e_5$\\
\hline
$\mathcal{Z}_{34}$ &$:$& $e_1e_1=e_5$ & $e_1e_2=e_4$ & $e_1 e_3 =\frac{1}{2}e_5$ & $e_2 e_1 =-\frac{1}{2}e_4$ & $e_2e_2=e_3$ \\
&& $e_2e_3=e_5$ & $e_2e_4 =-e_5$ & $e_3e_1=-\frac{1}{2}e_5$ & $e_3e_2=2e_5$ & $e_4e_2=e_5$\\
\end{longtable}


\subsubsection{Central extensions of ${\mathfrak N}_{14}^{-1}$}

Let us use the following notations:
\begin{longtable}{lllll}
$\nabla_1 =[\Delta_{11}],$& 
$\nabla_2 = [\Delta_{14}],$ & 
$\nabla_3=[\Delta_{21}],$ &
$ \nabla_4 = [\Delta_{ 2 4}],$ & 
$ \nabla_5= [\Delta_{ 2 3}+2\Delta_{32}].$ 
\end{longtable}
Take $\theta=\sum\limits_{i=1}^5\alpha_i\nabla_i\in {\rm H^2}({\mathfrak N}_{14}^{-1}).$ 
	The automorphism group of ${\mathfrak N}_{14}^{-1}$ consists of invertible matrices of the form

\begin{center}$	\phi=
	\begin{pmatrix}
	x &  y &  0 & 0\\
	0 &  z  &  0 & 0\\
	t &  u  &  z^2 & 0 \\
	v & w  &  0 & xz \\
	\end{pmatrix}$.\end{center}
	Since
	$$
	\phi^T\begin{pmatrix}
\alpha_1 &  0 & 0 & \alpha_2\\
\alpha_3 & 0 & \alpha_5 & \alpha_4 \\
0 &  2\alpha_5  & 0 & 0\\
	0 & 0 & 0 & 0
	\end{pmatrix} \phi=	\begin{pmatrix}
\alpha_1^* &  \beta^* & 0 & \alpha_2^*\\
\alpha_3^*-\beta^*  & \gamma^* & \alpha_5^* & \alpha_4^* \\
0 &  2\alpha_5^*  & 0 & 0\\
	0 & 0 & 0 & 0
	\end{pmatrix},
	$$
	 we have that the action of ${\rm Aut} ({\mathfrak N}_{14}^{-1})$ on the subspace
$\langle \sum\limits_{i=1}^5\alpha_i\nabla_i  \rangle$
is given by
$\langle \sum\limits_{i=1}^5\alpha_i^{*}\nabla_i\rangle,$
where
\begin{longtable}{l}
$\alpha^*_1=  (\alpha_1x+\alpha_2v)x,$ \\
$\alpha^*_2=   \alpha_2x^2z,$ \\
$\alpha^*_3=  2\alpha_1xy+\alpha_2vy+\alpha_2wx+\alpha_3xz+\alpha_4vz+3\alpha_5tz,$\\
$\alpha_4^*=(\alpha_2y+\alpha_4z)xz,$\\
$\alpha^*_5=   \alpha_5z^3.$ \\
\end{longtable}

Since ${\rm H}^2({\mathfrak N}_{14}^{-1})=
\Big\langle 
[\Delta_{ 1 1}], [\Delta_{ 2 1}]
, [\Delta_{ 1 4}], [\Delta_{ 2 4}], [\Delta_{ 2 3}+2\Delta_{ 3 2}]
\Big\rangle $ and we are interested only in new algebras, we must have $\alpha_5\neq 0$ and $(\alpha_2,\alpha_4)\neq (0,0)$.
Then:

\begin{enumerate}
    \item If $\alpha_2\neq 0$, then by choosing 
    $x=\sqrt{\alpha_2\alpha_5}$, $y=-\alpha_4$, $z=\alpha_2$, $v=-\frac{\alpha_1\sqrt{\alpha_5}}{\sqrt{\alpha_2}}$ and $w=\frac{2\alpha_1\alpha_4-\alpha_2\alpha_3}{\alpha_2}$,         we have the representative $\langle \nabla_2+\nabla_5 \rangle.$
    
    \item If $\alpha_2=0$ and $\alpha_4\alpha_1\neq 0$, then by choosing 
    $x=\frac{\alpha_1\alpha_5^2}{\alpha_4^3}$, $z=\frac{\alpha_1\alpha_5}{\alpha_4^2}$ and $t=-\frac{\alpha_1\alpha_3\alpha_5}{3\alpha_4^2}$,         we have the representative $\langle \nabla_1+\nabla_4+\nabla_5 \rangle.$
    
    \item If $\alpha_4\neq 0$ and $\alpha_2=\alpha_1=0$, then by choosing 
    $x=\alpha_5$, $z=\alpha_4$ and $t=-\frac{\alpha_3}{3}$,         we have the representative $\langle \nabla_4+\nabla_5 \rangle.$
\end{enumerate}
Summarizing, we have the following distinct orbits: 
\begin{center} 
$\langle \nabla_2+\nabla_5 \rangle,$ \  
$\langle  \nabla_1+ \nabla_4+\nabla_5 \rangle,$ \  
$\langle \nabla_4+\nabla_5 \rangle,$
\end{center}
which give the following new algebras:

\begin{longtable}{lllllllllll}
$\mathcal{Z}_{35}$ &$:$& $ e_1e_2=e_4$ & $e_1 e_4 =e_5$ & $e_2 e_1 =-e_4$ 
& $e_2e_2=e_3$ &  $e_2e_3=e_5$ & $e_3 e_2 =2e_5$\\
\hline
$\mathcal{Z}_{36}$ &$:$& $e_1e_1=e_5$ & $ e_1e_2=e_4$ & $e_2 e_1 =-e_4$ & $e_2e_2=e_3$ 
&  $e_2e_3=e_5$  & $e_2 e_4 =e_5$ & $e_3 e_2 =2e_5$  \\
\hline
$\mathcal{Z}_{37}$ &$:$& $ e_1e_2=e_4$ & $e_2 e_1 =-e_4$ & $e_2e_2=e_3$ 
&  $e_2e_3=e_5$ & $e_2 e_4 =e_5$ & $e_3 e_2 =2e_5$ \\
\end{longtable}

\subsubsection{Central extensions of ${\mathfrak Z}_{1}$} 

Let us use the following notations:
\begin{longtable}{llll}
$\nabla_1 =[\Delta_{13}+3\Delta_{22}+3\Delta_{31}],$& 
$\nabla_2 = [\Delta_{14}],$ & 
$ \nabla_3 = [\Delta_{41}],$ &
$\nabla_4 = [\Delta_{44}].$ 
\end{longtable}
Take $\theta=\sum\limits_{i=1}^4\alpha_i\nabla_i\in {\rm H^2}({\mathfrak Z}_{1}).$ 
	The automorphism group of ${\mathfrak Z}_{1}$ is generated by invertible matrices of the form
\begin{center}$	\phi=
	\begin{pmatrix}
	x &  0 &  0 & 0\\
	y &  x^2  &  0 & 0\\
	z &  3xy  &  x^3 & t \\
	u &  0  &  0 & v \\
	\end{pmatrix}.$\end{center}
	Since
	$$
	\phi^T\begin{pmatrix}
0 &  0 & \alpha_1 & \alpha_2\\
0 & 3\alpha_1 & 0 & 0 \\
3\alpha_1 &  0  & 0 & 0\\
\alpha_3 & 0 & 0 & \alpha_4
	\end{pmatrix} \phi=	\begin{pmatrix}
\alpha^* & \beta^* & \alpha_1^* & \alpha_2^*\\
	2\beta^*  &  3\alpha_1^* & 0 & 0 \\
	3\alpha_1^* &  0 & 0 & 0\\
	\alpha_3^* & 0 & 0 & \alpha_4^*
	\end{pmatrix},
	$$
	 we have that the action of ${\rm Aut} ({\mathfrak Z}_{1})$ on the subspace
$\langle \sum\limits_{i=1}^4\alpha_i\nabla_i  \rangle$
is given by
$\langle \sum\limits_{i=1}^4\alpha_i^{*}\nabla_i\rangle,$
where
\begin{longtable}{ll}
$\alpha^*_1=  \alpha_1x^4,$ &
$\alpha^*_2= \alpha_1xt+(\alpha_2x+\alpha_4u)v,$ \\
$\alpha^*_3=  (3\alpha_1t+\alpha_3v)x+\alpha_4vu,$ &
$\alpha_4^*=  \alpha_4v^2.$\\
\end{longtable}

Since ${\rm H}^2({\mathfrak Z}_{1})=
\Big\langle 
[\Delta_{ 1 3}+3\Delta_{ 22}+3\Delta_{31}], [\Delta_{ 1 4}],[\Delta_{ 41}], [\Delta_{44}]
\Big\rangle $
and we are interested only in new algebras, we must have $\alpha_1\neq0$ and $(\alpha_2,\alpha_3,\alpha_4)\neq (0,0,0)$.
Then 
\begin{enumerate}
    \item If $\alpha_4\neq 0$, then by taking $x=1$, $t=\frac{\alpha_2-\alpha_3}{2\sqrt{\alpha_1\alpha_4}}$, $u=\frac{\alpha_3-3\alpha_2}{2\alpha_4}$ and $v=\sqrt{\frac{\alpha_1}{\alpha_4}}$, we obtain the representative $\langle \nabla_1+\nabla_4\rangle$.
    
    \item If $\alpha_4=0$ and $\alpha_3\neq 3\alpha_2$, then by taking $x=1$, $t=\frac{\alpha_3}{\alpha_3-3\alpha_2}$ and $v=\frac{3\alpha_1}{3\alpha_2-\alpha_3}$, we obtain the representative $\langle \nabla_1+\nabla_2\rangle$.
    
    \item If $\alpha_4=0$ and $\alpha_3=3\alpha_2$, then by taking $x=v=1$ and $t=-\frac{\alpha_2}{\alpha_1}$, then we obtain the representative $\langle \nabla_1\rangle$, which does not satisfy condition $(\alpha_2,\alpha_3,\alpha_4)\neq 0$. So we will not consider it.
\end{enumerate}

Summarizing, we have the following distinct orbits: 
\begin{center} 
$\langle \nabla_1+\nabla_4 \rangle,$ \  
$\langle  \nabla_1+ \nabla_2 \rangle,$ \  
\end{center}
which give the following new algebras:

\begin{longtable}{lllllllllllll}
$\mathcal{Z}_{38}$ &$:$& $ e_1e_1=e_2$ & $e_1 e_2 =e_3$ & $e_1e_3=e_5$ & $e_2e_1=2e_3$  & $e_2e_2=3e_5$  & $e_3e_1=3e_5$ & $e_4e_4=e_5$ \\
\hline
$\mathcal{Z}_{39}$ &$:$& $ e_1e_1=e_2$ & $e_1 e_2 =e_3$ & $e_1e_3=e_5$ & $e_1e_4=e_5$  & $e_2e_1=2e_3$  & $e_2e_2=3e_5$ & $e_3e_1=3e_5$  \\
\end{longtable}

\subsection{Classification theorem for $5$-dimensional Zinbiel algebras}\label{secteoA}
Thanks to \cite{dzhuma5} each complex finite-dimensional Zinbiel algebra is nilpotent.
Hence,
the algebraic classification of complex $5$-dimensional Zinbiel algebras consists of two parts:
\begin{enumerate}
    \item $5$-dimensional algebras with identity $xyz=0$ (also known as $2$-step nilpotent algebras) is the intersection of all varieties of algebras defined by a family of polynomial identities   of degree three or more; for example, it is in intersection of associative, Zinbiel, Leibniz, etc, algebras. All these algebras can be obtained as central extensions of zero-product algebras. The geometric classification of $2$-step nilpotent algebras is given in \cite{ikp20}. It is the reason why we are not interested in it.
    
    \item $5$-dimensional Zinbiel (non-$2$-step nilpotent) algebras, which are central extensions of  Zinbiel  algebras with nonzero product of a smaller dimension. These algebras are classified by several steps:
    \begin{enumerate}
        \item split complex $5$-dimensional Zinbiel algebras are classified in  \cite{dzhuma5,centr3zinb};
        
        \item non-split  complex $5$-dimensional Zinbiel algebras with $2$-dimensional annihilator are classified in \cite{centr3zinb};

        \item non-split  complex $5$-dimensional Zinbiel algebras with $1$-dimensional annihilator are classified in Theorem A (see below).
            \end{enumerate}
  \end{enumerate}

\begin{theoremA}
Let ${\mathcal Z}$ be a non-split complex  $5$-dimensional  Zinbiel (non-$2$-step nilpotent) algebra.
Then ${\mathcal Z}$ is isomorphic to one algebra from the following list:

 \begin{longtable}{lllllll}

$\mathcal{Z}_{01}$ &$:$& $ e_1e_1=e_2$ & $e_1 e_2 =e_5$ & $e_1 e_3 =e_5$ & $e_2e_1=2e_5$ & $e_4e_4 =e_5$ \\

$\mathcal{Z}_{02}^\alpha$ &$:$& $ e_1e_1=e_2$ & $e_1 e_2 =e_5$ & $e_2e_1=2e_5$ & $e_3e_4=e_5$ & $e_4e_3 =\alpha e_5$ \\

$\mathcal{Z}_{03}$ &$:$& $ e_1e_1=e_2$ & $e_1 e_2 =e_5$ & $e_2e_1=2e_5$ \\
&& $e_3e_3=e_5$ & $e_3e_4=e_5$ & $e_4e_3 =-e_5$ \\

$\mathcal{Z}_{04}$ &$:$& $ e_1e_1=e_2$ & $e_1 e_2 =e_5$ & $e_1e_4=e_5$ \\
&& $e_2e_1=2e_5$ & $e_3e_4=e_5$ & $e_4e_3=2e_5$ \\


$\mathcal{Z}_{05}$ &$:$& $ e_1e_1=e_3$ & $e_1 e_3 =e_5$ & $e_2e_2 =e_4$ \\
&& $e_2 e_4 =e_5$ & $e_3e_1=2e_5$ & $e_4e_2=2e_5$ \\


$\mathcal{Z}_{06}$ &$:$& $ e_1e_2=e_3$ & $e_1 e_3 =e_5$ & $e_2e_1 =-e_3$ & $e_2 e_4 =e_5$ \\ 

$\mathcal{Z}_{07}$ &$:$& $ e_1e_2=e_3$ & $e_1e_3 =e_5$ & $e_2e_1 =-e_3$ & $e_4e_1=e_5$  \\ 

$\mathcal{Z}_{08}$ &$:$& $ e_1e_2=e_3$ & $e_1e_3 =e_5$ & $e_2e_1 =-e_3$ & $e_2e_2=e_5$ & $e_4e_1=e_5$  \\ 

$\mathcal{Z}_{09}$ &$:$& $ e_1e_2=e_3$ & $e_1 e_3 =e_5$ & $e_2e_1 =-e_3$ & $e_2 e_4 =e_5$ & $e_4e_1=e_5$ \\ 

$\mathcal{Z}_{10}^\alpha$ &$:$& $ e_1e_2=e_3$ & $e_1 e_4 =\alpha e_5$ & $e_2e_1 =-e_3$ & $e_2 e_3 =e_5$ & $e_4e_1=e_5$ \\ 

$\mathcal{Z}_{11}$ &$:$& $e_1e_1=e_5$ & $e_1e_2=e_3$ & $e_1 e_4 =-e_5$ \\
&& $e_2e_1 =-e_3$ & $e_2 e_3 =e_5$  & $e_4e_1=e_5$ \\ 

$\mathcal{Z}_{12}$ &$:$& $ e_1e_2=e_3$ & $e_2e_1 =-e_3$ & $e_2 e_3 =e_5$ & $e_4e_4=e_5$ \\ 

$\mathcal{Z}_{13}$ &$:$& $e_1e_1=e_5$ & $e_1e_2=e_3$ & $e_2e_1 =-e_3$ & $e_2 e_3 =e_5$ & $e_4e_4=e_5$ \\ 

$\mathcal{Z}_{14}^\alpha$ &$:$& $e_1e_1=\alpha e_5$ & $e_1e_2=e_3$ & $e_1e_4=e_5$ \\
&& $e_2e_1 =-e_3$ & $e_2 e_3 =e_5$  & $e_4e_4=e_5$ \\ 

$\mathcal{Z}_{15}$ &$:$& $e_1e_2=e_3$ & $e_2e_1 =-e_3$ & $e_4e_3=e_5$ \\ 

$\mathcal{Z}_{16}$ &$:$& $e_1e_1=e_5$ & $e_1e_2=e_3$ & $e_2e_1 =-e_3$ & $e_4e_3=e_5$ \\ 

$\mathcal{Z}_{17}$ &$:$& $e_1e_2=e_3+e_5$ & $e_2e_1 =-e_3$ & $e_4e_3=e_5$ \\ 

$\mathcal{Z}_{18}$ &$:$& $e_1e_2=e_3$ & $e_2e_1 =-e_3$ & $e_2e_4=e_5$ & $e_4e_3=e_5$ \\ 

$\mathcal{Z}_{19}$ &$:$& $e_1e_1=e_5$ & $e_1e_2=e_3$ & $e_2e_1 =-e_3$ & $e_2e_4=e_5$ & $e_4e_3=e_5$ \\ 

$\mathcal{Z}_{20}$ &$:$& $e_1e_2=e_3+e_5$ & $e_2e_1 =-e_3$ & $e_2e_4=e_5$ & $e_4e_3=e_5$ \\ 

$\mathcal{Z}_{21}$ &$:$& $e_1e_2=e_3$ & $e_2e_1 =-e_3$ & $e_2e_2=e_5$ & $e_2e_4=e_5$ & $e_4e_3=e_5$ \\ 

$\mathcal{Z}_{22}$ &$:$& $e_1e_1=e_5$ & $e_1e_2=e_3$ & $e_2e_1 =-e_3$ \\
&& $e_2e_2=e_5$ & $e_2e_4=e_5$  & $e_4e_3=e_5$ \\ 


$\mathcal{Z}_{23}$ &$:$& $ e_1e_2=e_3$ & $e_1 e_3 =e_5$ & $e_1e_4 =-e_5$ & $e_2e_1 =e_4$ \\
&& $e_2 e_2 =-e_3$  & $e_2e_3=-e_5$ & $e_2e_4=e_5$ & $e_3e_2=-2e_5$ \\


$\mathcal{Z}_{24}$ &$:$& $e_1e_1=e_3$ & $e_1 e_2 =e_4$ & $e_1e_4=-e_5$ & $e_2e_1 =-e_3$ & $e_2e_2 =-e_4$ \\
&& $e_2 e_4 =e_5$ & $e_3 e_2 =-e_5$ & $e_4e_1=-e_5$ & $e_4e_2=2 e_5$ \\ 

$\mathcal{Z}_{25}$ &$:$& $e_1e_1=e_3+e_5$ & $e_1 e_2 =e_4$ & $e_1e_3=-e_5$ & $e_1 e_4 =e_5$ \\
&& $e_2e_1 =-e_3$  & $e_2e_2 =-e_4$ & $e_2 e_3 =e_5$ & $e_2 e_4 =-e_5$ \\
&& $e_3 e_1 =-2e_5$ & $e_3e_2=2e_5$  & $e_4e_1=2e_5$ & $e_4e_2=-2e_5$ \\ 

$\mathcal{Z}_{26}$ &$:$& $e_1e_1=e_3$ & $e_1 e_2 =e_4$ & $e_1e_3=-e_5$ & $e_1 e_4 =e_5$ \\
&& $e_2e_1 =-e_3$  & $e_2e_2 =-e_4$ & $e_2 e_3 =e_5$ & $e_2 e_4 =-e_5$ \\
&& $e_3 e_1 =-2e_5$ & $e_3e_2=2e_5$  & $e_4e_1=2e_5$ & $e_4e_2=-2e_5$ \\ 


$\mathcal{Z}_{27}$ &$:$& $ e_1e_2=e_3$ & $e_1 e_3 =-e_5$ & $e_1 e_4 =e_5$ \\
&& $e_2e_1=e_4$ & $e_2e_3 =-e_5$ & $e_2e_4=e_5$\\

$\mathcal{Z}_{28}$ &$:$& $ e_1e_2=e_3$ & $e_1 e_3 =-e_5$ & $e_1 e_4 =e_5$ & $e_2e_1=e_4$ & $e_2e_2 =e_5$ \\

$\mathcal{Z}_{29}$ &$:$& $ e_1e_2=e_3$ & $e_1 e_3 =-e_5$ & $e_1 e_4 =e_5$ & $e_2e_1=e_4$ &   \\


$\mathcal{Z}_{30}^{\alpha\neq-1}$ &$:$& $ e_1e_2=e_4$ & $e_1 e_3 =(\alpha+1)e_5$ & $e_2 e_1 =\alpha e_4$ & $e_2e_2=e_3$ \\
&& $e_2e_4 =2\alpha e_5$  
 & 
\multicolumn{2}{l}{$e_3e_1=2\alpha(\alpha+1)e_5$} & 
\multicolumn{2}{l}{$e_4e_2=2(\alpha+1)e_5$}\\


$\mathcal{Z}_{31}$ &$:$& $ e_1e_2=e_4$ & $e_1 e_3 =e_5$ & $e_2 e_1 =e_5$ & $e_2e_2=e_3$ &  $e_4e_2=2e_5$ \\


$\mathcal{Z}_{32}$ &$:$& $e_1e_1=e_5$ & $ e_1e_2=e_4$ & $e_1 e_3 =\frac{1}{2}e_5$ & $e_2 e_1 =-\frac{1}{2}e_4$ \\
&& $e_2e_2=e_3$ & $e_2e_4 =-e_5$ & $e_3e_1=-\frac{1}{2}e_5$ & $e_4e_2=e_5$\\

$\mathcal{Z}_{33}$ &$:$& $ e_1e_2=e_4$ & $e_1 e_3 =\frac{1}{2}e_5$ & $e_2 e_1 =-\frac{1}{2}e_4$ & $e_2e_2=e_3$ & $e_2e_3=e_5$\\
&& $e_2e_4 =-e_5$ & $e_3e_1=-\frac{1}{2}e_5$ & $e_3e_2=2e_5$ & $e_4e_2=e_5$\\

$\mathcal{Z}_{34}$ &$:$& $e_1e_1=e_5$ & $e_1e_2=e_4$ & $e_1 e_3 =\frac{1}{2}e_5$ & $e_2 e_1 =-\frac{1}{2}e_4$ & $e_2e_2=e_3$ \\
&& $e_2e_3=e_5$ & $e_2e_4 =-e_5$ & $e_3e_1=-\frac{1}{2}e_5$ & $e_3e_2=2e_5$ & $e_4e_2=e_5$\\


$\mathcal{Z}_{35}$ &$:$& $ e_1e_2=e_4$ & $e_1 e_4 =e_5$ & $e_2 e_1 =-e_4$ \\
&& $e_2e_2=e_3$ &  $e_2e_3=e_5$ & $e_3 e_2 =2e_5$\\

$\mathcal{Z}_{36}$ &$:$& $e_1e_1=e_5$ & $ e_1e_2=e_4$ & $e_2 e_1 =-e_4$ & $e_2e_2=e_3$ \\
&&  $e_2e_3=e_5$  & $e_2 e_4 =e_5$ & $e_3 e_2 =2e_5$  \\

$\mathcal{Z}_{37}$ &$:$& $ e_1e_2=e_4$ & $e_2 e_1 =-e_4$ & $e_2e_2=e_3$ \\
&&  $e_2e_3=e_5$ & $e_2 e_4 =e_5$ & $e_3 e_2 =2e_5$ \\


$\mathcal{Z}_{38}$ &$:$& $ e_1e_1=e_2$ & $e_1 e_2 =e_3$ & $e_1e_3=e_5$ & $e_2e_1=2e_3$ \\
&& $e_2e_2=3e_5$  & $e_3e_1=3e_5$ & $e_4e_4=e_5$ \\

$\mathcal{Z}_{39}$ &$:$& $ e_1e_1=e_2$ & $e_1 e_2 =e_3$ & $e_1e_3=e_5$ & $e_1e_4=e_5$ \\
&& $e_2e_1=2e_3$  & $e_2e_2=3e_5$ & $e_3e_1=3e_5$  \\


$\mathcal{Z}_{40}$ &$:$& $ e_1e_1=e_2$ & $e_1 e_2 =\frac{1}{2}e_3$ & $e_1e_3=2e_4$ & $e_1e_4=e_5$ & $e_2e_1=e_3$ \\
&& $e_2e_2=3e_4$ & $e_2e_3=8e_5$ & $e_3e_1=6e_4$ & $e_3e_2=12e_5$ & $e_4e_1=4e_5$ \\


$[\mathfrak{N}_1^{\mathbb{C}}]^2_{01}$ &$:$& $ e_1e_1=e_2$ &$e_1 e_2 =e_4$ &$e_1 e_3 =e_5$&$e_2 e_1 =2e_4$ \\

$[\mathfrak{N}_1^{\mathbb{C}}]^{2,\alpha}_{02}$ &$:$& $ e_1e_1=e_2$ &$e_1 e_2 =e_4$ &$e_1 e_3 =\alpha e_5$&$e_2 e_1 =2e_4$&$e_3 e_1 =e_5$\\

$[\mathfrak{N}_1^{\mathbb{C}}]^2_{03}$ &$:$& $ e_1e_1=e_2$ &$e_1 e_2 =e_4$ &$e_1 e_3 =e_5$&$e_2 e_1 =2e_4$&$e_3 e_3 =e_5$\\

$[\mathfrak{N}_1^{\mathbb{C}}]^2_{04}$ &$:$& $ e_1e_1=e_2$&$e_1 e_2 =e_4$&$e_2 e_1 =2e_4$ &$e_3 e_3 =e_5$\\

$[\mathfrak{N}_1^{\mathbb{C}}]^2_{05}$ &$:$& $ e_1e_1=e_2$ &$e_1 e_2 =e_4$ &$e_1 e_3 =e_4$&$e_2 e_1 =2e_4$ &$e_3 e_3 =e_5$ \\

$[\mathfrak{N}_1^{\mathbb{C}}]^2_{06}$ &$:$& $ e_1e_1=e_2$ &$e_1 e_2 =e_4$ &$e_1 e_3 =e_4+e_5$ &$e_2 e_1 =2e_4$ &$e_3 e_3 =e_5$\\

$[\mathfrak{N}_1^{\mathbb{C}}]^2_{07}$ &$:$& $ e_1e_1=e_2$ &$e_1 e_2 =e_4$ &$e_1 e_3 =e_5$ &$e_2 e_1 =2e_4$ & $e_3 e_1 =e_4+2e_5$\\

$[\mathfrak{N}_1^{\mathbb{C}}]^2_{08}$ &$:$& $ e_1e_1=e_2$ &$e_1 e_2 =e_4$ &$e_1 e_3 =e_5$ &$e_2 e_1 =2e_4$ &$e_3 e_3 =e_4$\\

$[\mathfrak{N}_1^{\mathbb{C}}]^{2,\alpha}_{09}$ &$:$& $ e_1e_1=e_2$ &$e_1 e_2 =e_4$
&$e_1 e_3 =\alpha e_5$\\
&&$e_2 e_1 =2e_4$ &$e_3 e_1 =e_5$  & $e_3 e_3 =e_4$ \\


$[\mathfrak{N}_1]^2_{01}$ &$:$& $e_1e_1=e_4$& $e_1e_2=  e_3$&$e_1e_3=e_5$  &$ e_2e_1= -e_3$ \\

$[\mathfrak{N}_1]^2_{02}$ &$:$& $e_1e_2=  e_3+e_4$&$e_1e_3=e_5$  &$ e_2e_1= -e_3$ \\

$[\mathfrak{N}_1]^2_{03}$ &$:$& $e_1e_2=  e_3$  &$e_1e_3=e_5$ &$ e_2e_1= -e_3$&$e_2e_2=e_4$\\

$[\mathfrak{N}_1]^2_{04}$ &$:$&$e_1e_1=e_4$& $e_1e_2=  e_3$  &$e_1e_3=e_5$&$ e_2e_1= -e_3$ &$e_2e_2=e_4$\\

$[\mathfrak{N}_1]^2_{05}$ &$:$& $e_1e_2=  e_3$ &$e_1e_3=e_5$ &$ e_2e_1= -e_3$ &$e_2e_3=e_4$ \\

$[\mathfrak{N}_1]^2_{06}$ &$:$& $e_1e_1=e_4$&$e_1e_2=  e_3$ &$e_1e_3=e_5$ &$ e_2e_1= -e_3$ &$e_2e_3=e_4$\\

$[\mathfrak{N}_1]^2_{07}$ &$:$& $e_1e_2=  e_3+e_4$ &$e_1e_3=e_5$ &$ e_2e_1= -e_3$&$e_2e_3=e_4$ \\

$[\mathfrak{N}_1]^2_{08}$ &$:$&$e_1e_1=e_4$& $e_1e_2=  e_3$ &$e_1e_3=e_5$ \\
&&$ e_2e_1= -e_3$ &$e_2e_2=e_4$ &$e_2e_3=e_4$\\

$[\mathfrak{N}_1]^2_{09}$ &$:$&$e_1e_1=e_4$& $e_1e_2=  e_3$ &$e_1e_3=e_5$ &$ e_2e_1= -e_3$&$e_2e_2=e_5$ \\

$[\mathfrak{N}_1]^2_{10}$ &$:$&  $e_1e_2=  e_3+e_4$ &$e_1e_3=e_5$ &$ e_2e_1= -e_3$&$e_2e_2=e_5$ \\
        \end{longtable}

All of these algebras are pairwise non-isomorphic, except for the following:
 $\mathcal{Z}_{02}^{\alpha} \cong  \mathcal{Z}_{02}^{\alpha^{-1}}.$ 

\end{theoremA}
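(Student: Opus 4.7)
The approach is to combine the Skjelbred--Sund central extension analyses carried out in the preceding subsections with the existing classifications of lower-dimensional central extensions from \cite{centr3zinb,dzhuma5}. Since every complex finite-dimensional Zinbiel algebra is nilpotent \cite{dzhuma5}, and we exclude both split algebras and $2$-step nilpotent algebras, the classification reduces by Lemma~2 to enumerating non-split central extensions with $1$- or $2$-dimensional annihilator of the Zinbiel algebras of dimension $3$ and $4$ catalogued above and in \cite{centr3zinb}.

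The algebras with $2$-dimensional annihilator come directly from the $2$-dimensional central extensions of $3$-dimensional Zinbiel algebras classified in \cite{centr3zinb}; these yield the families $[\mathfrak{N}_1^{\mathbb{C}}]^2_{0j}$ and $[\mathfrak{N}_1]^2_{0j}$ at the end of the list. For the $1$-dimensional annihilator case, by the Remark following the $4$-dimensional cohomology table only the algebras $\mathfrak{N}_{01}, \mathfrak{N}_{02}, \mathfrak{N}_{03}, \mathfrak{N}_{07}, \mathfrak{N}_{08}^{1}, \mathfrak{N}_{12}, \mathfrak{N}_{14}^{\alpha}, \mathfrak{Z}_{1}, [\mathfrak{Z}_{1}]^1_{1}$ can serve as the base: for the rest, every $1$-dimensional central extension is split or has an annihilator of dimension $\ge 2$. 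For each admissible base ${\bf A}'$ I would (as in the subsections above) write a general cocycle $\theta=\sum\alpha_i\nabla_i$, compute $\phi^T M_\theta \phi$ for a general $\phi\in\operatorname{Aut}({\bf A}')$ to extract the transformation rules for the coefficients $\alpha_i^*$, and then branch on which $\alpha_i$ vanish, using well-chosen automorphism parameters to reduce each branch to one of finitely many normal forms in ${\bf T}_1({\bf A}')$. By the orbit-to-isomorphism-class bijection of Lemma~4, each normal form gives exactly one of the algebras $\mathcal{Z}_{01},\ldots,\mathcal{Z}_{40}$, where $\mathcal{Z}_{40}$ arises from $[\mathfrak{Z}_{1}]^1_{1}$.

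Pairwise non-isomorphism between algebras arising from different base algebras is automatic because ${\bf A}/\operatorname{Ann}({\bf A})$ is an isomorphism invariant that recovers ${\bf A}'$; within a fixed base, distinct $\operatorname{Aut}({\bf A}')$-orbits yield non-isomorphic extensions by construction. The single remaining coincidence $\mathcal{Z}_{02}^{\alpha}\cong\mathcal{Z}_{02}^{\alpha^{-1}}$ is witnessed by the basis swap $e_3\leftrightarrow e_4$, which interchanges the structure constants of $e_3e_4$ and $e_4e_3$. The main obstacle is bookkeeping in the case analysis: ${\mathfrak N}_{03}$ has $\dim {\rm H}^2=11$, so the branching tree on vanishing coefficients is large and each leaf requires a carefully tailored automorphism to produce the advertised normal form; parallel but smaller-scale branchings for ${\mathfrak N}_{01}$ and the $\alpha$-families ${\mathfrak N}_{08}^{1}$, ${\mathfrak N}_{14}^{\alpha}$ need equally delicate handling to identify the exact value(s) of $\alpha$ at which extra parameters can be normalized.
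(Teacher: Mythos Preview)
Your proposal is correct and follows essentially the same route as the paper: reduce via Lemma~2 to non-split central extensions with annihilator of dimension $1$ or $2$, import the $2$-dimensional-annihilator cases from \cite{centr3zinb}, and for the $1$-dimensional-annihilator cases run the Skjelbred--Sund orbit analysis over exactly the base algebras singled out in the Remark, producing $\mathcal{Z}_{01},\ldots,\mathcal{Z}_{40}$ (with $\mathcal{Z}_{40}$ from $[\mathfrak{Z}_1]^1_1$). Your justification of pairwise non-isomorphism via the invariant ${\bf A}/\operatorname{Ann}({\bf A})$ together with Lemma~4, and of the coincidence $\mathcal{Z}_{02}^{\alpha}\cong\mathcal{Z}_{02}^{\alpha^{-1}}$ via the $e_3\leftrightarrow e_4$ swap, matches the paper's reasoning as well.
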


\section{The geometric classification of Zinbiel  algebras}

\subsection{Definitions and notation}
Given an $n$-dimensional vector space $\mathbb V$, the set ${\rm Hom}(\mathbb V \otimes \mathbb V,\mathbb V) \cong \mathbb V^* \otimes \mathbb V^* \otimes \mathbb V$
is a vector space of dimension $n^3$. This space has the structure of the affine variety $\mathbb{C}^{n^3}$. Indeed, let us fix a basis $e_1,\dots,e_n$ of $\mathbb V$. Then any $\mu\in {\rm Hom}(\mathbb V \otimes \mathbb V,\mathbb V)$ is determined by $n^3$ structure constants $c_{ij}^k\in\mathbb{C}$ such that
$\mu(e_i\otimes e_j)=\sum\limits_{k=1}^nc_{ij}^ke_k$. A subset of ${\rm Hom}(\mathbb V \otimes \mathbb V,\mathbb V)$ is {\it Zariski-closed} if it can be defined by a set of polynomial equations in the variables $c_{ij}^k$ ($1\le i,j,k\le n$).

Let $T$ be a set of polynomial identities.
The set of algebra structures on $\mathbb V$ satisfying polynomial identities from $T$ forms a Zariski-closed subset of the variety ${\rm Hom}(\mathbb V \otimes \mathbb V,\mathbb V)$. We denote this subset by $\mathbb{L}(T)$.
The general linear group ${\rm GL}(\mathbb V)$ acts on $\mathbb{L}(T)$ by conjugations:
$$ (g * \mu )(x\otimes y) = g\mu(g^{-1}x\otimes g^{-1}y)$$
for $x,y\in \mathbb V$, $\mu\in \mathbb{L}(T)\subset {\rm Hom}(\mathbb V \otimes\mathbb V, \mathbb V)$ and $g\in {\rm GL}(\mathbb V)$.
Thus, $\mathbb{L}(T)$ is decomposed into ${\rm GL}(\mathbb V)$-orbits that correspond to the isomorphism classes of algebras.
Let $O(\mu)$ denote the orbit of $\mu\in\mathbb{L}(T)$ under the action of ${\rm GL}(\mathbb V)$ and $\overline{O(\mu)}$ denote the Zariski closure of $O(\mu)$.

Let $\mathcal A$ and $\mathcal B$ be two $n$-dimensional algebras satisfying the identities from $T$, and let $\mu,\lambda \in \mathbb{L}(T)$ represent $\mathcal A$ and $\mathcal B$, respectively.
We say that $\mathcal A$ degenerates to $\mathcal B$ and write $\mathcal A\to \mathcal B$ if $\lambda\in\overline{O(\mu)}$.
Note that in this case we have $\overline{O(\lambda)}\subset\overline{O(\mu)}$. Hence, the definition of a degeneration does not depend on the choice of $\mu$ and $\lambda$. If $\mathcal A\not\cong \mathcal B$, then the assertion $\mathcal A\to \mathcal B$ is called a {\it proper degeneration}. We write $\mathcal A\not\to \mathcal B$ if $\lambda\not\in\overline{O(\mu)}$.

Let $\mathcal A$ be represented by $\mu\in\mathbb{L}(T)$. Then  $\mathcal A$ is  {\it rigid} in $\mathbb{L}(T)$ if $O(\mu)$ is an open subset of $\mathbb{L}(T)$.
 Recall that a subset of a variety is called irreducible if it cannot be represented as a union of two non-trivial closed subsets.
 A maximal irreducible closed subset of a variety is called an {\it irreducible component}.
It is well known that any affine variety can be represented as a finite union of its irreducible components in a unique way.
The algebra $\mathcal A$ is rigid in $\mathbb{L}(T)$ if and only if $\overline{O(\mu)}$ is an irreducible component of $\mathbb{L}(T)$.

Given the spaces $U$ and $W$, we write simply $U>W$ instead of $\dim \,U>\dim \,W$.



\subsection{Method of the description of  degenerations of algebras}

In the present work we use the methods applied to Lie algebras in \cite{  GRH,GRH2,S90}.
First of all, if $\mathcal A\to \mathcal B$ and $\mathcal A\not\cong \mathcal B$, then $\mathfrak{Der}(\mathcal A)<\mathfrak{Der}(\mathcal B)$, where $\mathfrak{Der}(\mathcal A)$ is the Lie algebra of derivations of $\mathcal A$. We compute the dimensions of algebras of derivations and check the assertion $\mathcal A\to \mathcal B$ only for $\mathcal A$ and $\mathcal B$ such that $\mathfrak{Der}(\mathcal A)<\mathfrak{Der}(\mathcal B)$.


To prove degenerations, we construct families of matrices parametrized by $t$. Namely, let $\mathcal A$ and $\mathcal B$ be two algebras represented by the structures $\mu$ and $\lambda$ from $\mathbb{L}(T)$ respectively. Let $e_1,\dots, e_n$ be a basis of $\mathbb  V$ and $c_{ij}^k$ ($1\le i,j,k\le n$) be the structure constants of $\lambda$ in this basis. If there exist $a_i^j(t)\in\mathbb{C}$ ($1\le i,j\le n$, $t\in\mathbb{C}^*$) such that $E_i^t=\sum\limits_{j=1}^na_i^j(t)e_j$ ($1\le i\le n$) form a basis of $\mathbb V$ for any $t\in\mathbb{C}^*$, and the structure constants of $\mu$ in the basis $E_1^t,\dots, E_n^t$ are such rational functions $c_{ij}^k(t)\in\mathbb{C}[t]$ that $c_{ij}^k(0)=c_{ij}^k$, then $\mathcal A\to \mathcal B$.
In this case  $E_1^t,\dots, E_n^t$ is called a {\it parametrized basis} for $\mathcal A\to \mathcal B$.
To simplify our equations, we will use the notation $A_i=\langle e_i,\dots,e_n\rangle,\ i=1,\ldots,n$ and write simply $A_pA_q\subset A_r$ instead of $c_{ij}^k=0$ ($i\geq p$, $j\geq q$, $k<r$).

Since the variety of $5$-dimensional Zinbiel algebras  contains infinitely many non-isomorphic algebras, we have to do some additional work.
Let $\mathcal A(*):=\{\mathcal A(\alpha)\}_{\alpha\in I}$ be a series of algebras, and let $\mathcal B$ be another algebra. Suppose that for $\alpha\in I$, $\mathcal A(\alpha)$ is represented by the structure $\mu(\alpha)\in\mathbb{L}(T)$ and $B\in\mathbb{L}(T)$ is represented by the structure $\lambda$. Then we say that $\mathcal A(*)\to \mathcal B$ if $\lambda\in\overline{\{O(\mu(\alpha))\}_{\alpha\in I}}$, and $\mathcal A(*)\not\to \mathcal B$ if $\lambda\not\in\overline{\{O(\mu(\alpha))\}_{\alpha\in I}}$.

Let $\mathcal A(*)$, $\mathcal B$, $\mu(\alpha)$ ($\alpha\in I$) and $\lambda$ be as above. To prove $\mathcal A(*)\to \mathcal B$ it is enough to construct a family of pairs $(f(t), g(t))$ parametrized by $t\in\mathbb{C}^*$, where $f(t)\in I$ and $g(t)\in {\rm GL}(\mathbb V)$. Namely, let $e_1,\dots, e_n$ be a basis of $\mathbb V$ and $c_{ij}^k$ ($1\le i,j,k\le n$) be the structure constants of $\lambda$ in this basis. If we construct $a_i^j:\mathbb{C}^*\to \mathbb{C}$ ($1\le i,j\le n$) and $f: \mathbb{C}^* \to I$ such that $E_i^t=\sum\limits_{j=1}^na_i^j(t)e_j$ ($1\le i\le n$) form a basis of $\mathbb V$ for any  $t\in\mathbb{C}^*$, and the structure constants of $\mu_{f(t)}$ in the basis $E_1^t,\dots, E_n^t$ are such rational functions $c_{ij}^k(t)\in\mathbb{C}[t]$ that $c_{ij}^k(0)=c_{ij}^k$, then $\mathcal A(*)\to \mathcal B$. In this case  $E_1^t,\dots, E_n^t$ and $f(t)$ are called a parametrized basis and a {\it parametrized index} for $\mathcal A(*)\to \mathcal B$, respectively.

We now explain how to prove $\mathcal A(*)\not\to\mathcal  B$.
Note that if $\mathfrak{Der} \ \mathcal A(\alpha)  > \mathfrak{Der} \  \mathcal B$ for all $\alpha\in I$ then $\mathcal A(*)\not\to\mathcal B$.
One can also use the following  Lemma, whose proof is the same as the proof of Lemma 1.5 from \cite{GRH}.

\begin{lemma}
Let $\mathfrak{B}$ be a Borel subgroup of ${\rm GL}(\mathbb V)$ and $\mathcal{R}\subset \mathbb{L}(T)$ be a $\mathfrak{B}$-stable closed subset.
If $\mathcal A(*) \to \mathcal B$ and for any $\alpha\in I$ the algebra $\mathcal A(\alpha)$ can be represented by a structure $\mu(\alpha)\in\mathcal{R}$, then there is $\lambda\in \mathcal{R}$ representing $\mathcal B$.
\end{lemma}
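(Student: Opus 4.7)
The plan is to reduce the lemma to the single geometric fact that the ${\rm GL}(\mathbb V)$-saturation of a $\mathfrak{B}$-stable closed subset is still closed. Once that is in hand the argument is essentially formal: since every $\mu(\alpha)$ lies in $\mathcal{R}$, we have $\bigcup_{\alpha\in I}O(\mu(\alpha))\subset {\rm GL}(\mathbb V)*\mathcal{R}$, so passing to Zariski closures yields
$$\lambda\in\overline{\bigcup_{\alpha\in I}O(\mu(\alpha))}\subset {\rm GL}(\mathbb V)*\mathcal{R}.$$
Writing $\lambda=g*r$ with $g\in{\rm GL}(\mathbb V)$ and $r\in\mathcal{R}$, the element $r=g^{-1}*\lambda$ still represents $\mathcal B$ while lying in $\mathcal R$, which is exactly the required conclusion.

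The substantive step is therefore the closedness of ${\rm GL}(\mathbb V)*\mathcal R$. I would establish it by the standard associated-bundle construction: form the quotient variety
$${\rm GL}(\mathbb V)\times^{\mathfrak B}\mathcal R:=({\rm GL}(\mathbb V)\times\mathcal R)/\mathfrak B,$$
where $\mathfrak B$ acts on ${\rm GL}(\mathbb V)\times\mathcal R$ via $b\cdot(g,r)=(gb^{-1},b*r)$, and consider the ${\rm GL}(\mathbb V)$-equivariant morphism $\pi\colon[g,r]\mapsto g*r$ into $\mathbb{L}(T)$. Its image is exactly ${\rm GL}(\mathbb V)*\mathcal R$, and the source is a fibre bundle over the flag variety ${\rm GL}(\mathbb V)/\mathfrak B$ with fibre $\mathcal R$. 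Because ${\rm GL}(\mathbb V)/\mathfrak B$ is projective (hence complete) and $\mathcal R$ is closed in the affine variety $\mathbb{L}(T)$, the morphism $\pi$ is proper, so its image is closed in $\mathbb{L}(T)$.

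The only real obstacle is invoking completeness of the flag variety correctly; this is the geometric input that singles out Borel subgroups, rather than arbitrary closed subgroups, as the right object to work with. An alternative route, closer in spirit to the Grunewald--O'Halloran proof cited in \cite{GRH}, is to use the Bruhat decomposition ${\rm GL}(\mathbb V)=\bigsqcup_{w\in W}\mathfrak B w\mathfrak B$ together with $\mathfrak B$-stability of $\mathcal R$ to rewrite ${\rm GL}(\mathbb V)*\mathcal R$ as a finite union of pieces of the form $\mathfrak B w\mathfrak B*\mathcal R$, and then appeal to properness of the maps from the corresponding Schubert varieties to obtain closedness; here the finiteness of the Weyl group $W$ plays the role of completeness. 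Either route produces the lemma in a few lines once the closedness step is in place, so I would pick whichever fits best with the conventions already in use in the paper.
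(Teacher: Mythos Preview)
Your proposal is correct and matches what the paper does: the paper gives no independent proof but simply refers to Lemma~1.5 of \cite{GRH}, whose argument is exactly the completeness-of-the-flag-variety step you describe (that ${\rm GL}(\mathbb V)*\mathcal R$ is closed whenever $\mathcal R$ is closed and $\mathfrak B$-stable, via the associated bundle over ${\rm GL}(\mathbb V)/\mathfrak B$). The formal deduction you outline from this closedness is the same as in \cite{GRH}, so there is nothing to add.
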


\subsection{The geometric classification of $5$-dimensional  
 Zinbiel algebras}
The main result of the present section is the following theorem.

\begin{theoremB}
The variety of complex  $5$-dimensional Zinbiel algebras  has 
dimension  $24$   and it has 
$16$  irreducible components
defined by  
 
\begin{center}
$\mathcal{C}_1=\overline{\{\mathcal{O}({\mathfrak V}_{4+1})\}},$ \, $\mathcal{C}_2=\overline{\{\mathcal{O}({\mathfrak V}_{3+2})\}},$ \,
$\mathcal{C}_3=\overline{\mathcal{O}([\mathfrak{N}_1]^{2}_{08})},$ \,
$\mathcal{C}_4=\overline{\mathcal{O}([\mathfrak{N}_1^{\mathbb{C}}]^{2}_{06})},$ \,
$\mathcal{C}_5=\overline{\{\mathcal{O}(\mathcal{Z}_{02}^\alpha)\}},$ \,
$\mathcal{C}_6=\overline{\mathcal{O}(\mathcal{Z}_{05})},$ \,
$\mathcal{C}_7=\overline{\{\mathcal{O}(\mathcal{Z}_{14}^\alpha)\}},$ \,
$\mathcal{C}_8=\overline{\mathcal{O}(\mathcal{Z}_{22})},$ \,
$\mathcal{C}_{9}=\overline{\mathcal{O}(\mathcal{Z}_{23})},$ \,
$\mathcal{C}_{10}=\overline{\mathcal{O}(\mathcal{Z}_{24})},$ \,
$\mathcal{C}_{11}=\overline{\mathcal{O}(\mathcal{Z}_{27})},$ \,
$\mathcal{C}_{12}=\overline{\{\mathcal{O}(\mathcal{Z}_{30}^{\alpha})\}},$ \,
$\mathcal{C}_{13}=\overline{\mathcal{O}(\mathcal{Z}_{34})},$ \,
$\mathcal{C}_{14}=\overline{\mathcal{O}(\mathcal{Z}_{35})},$ \,
$\mathcal{C}_{15}=\overline{\mathcal{O}(\mathcal{Z}_{38})},$ \,
$\mathcal{C}_{16}=\overline{\mathcal{O}(\mathcal{Z}_{40})}.$
\end{center}
In particular, the variety of  complex  $5$-dimensional Zinbiel algebras  has  $11$ rigid algebras
\begin{center}$[\mathfrak{N}_1]^{2}_{08}, \, 
[\mathfrak{N}_1^{\mathbb{C}}]^{2}_{06}, \, 
\mathcal{Z}_{05}, \,
\mathcal{Z}_{22}, \,
\mathcal{Z}_{23}, \,
\mathcal{Z}_{24}, \,
\mathcal{Z}_{27}, \,
\mathcal{Z}_{34}, \,
\mathcal{Z}_{35}, \,
\mathcal{Z}_{38} \mbox{ and }
\mathcal{Z}_{40}.$
\end{center}

\end{theoremB}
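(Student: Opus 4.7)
The plan is to combine the algebraic classification of Theorem A with the known classification of $5$-dimensional $2$-step nilpotent Zinbiel algebras (the $2$-step nilpotent case reduces to commutative associative algebras, and hence to the classification recorded in \cite{ikp20}). Together these give a finite list of isomorphism classes together with several one-parameter families, and every point of $\mathbb{L}(T)$ (where $T$ is the Zinbiel identity) lies in the orbit of one of them. The first step is to compute $\dim\mathfrak{Der}(\mathcal A)$ for every algebra and every generic member of every family; since $\dim O(\mu)=n^{2}-\dim\mathfrak{Der}(\mu)=25-\dim\mathfrak{Der}(\mu)$, this gives the dimension of each orbit. In particular, $24=\max \dim O(\mu)$ is achieved exactly by the eleven rigid algebras and by the generic members of the families $\mathcal Z_{02}^{\alpha}$, $\mathcal Z_{14}^{\alpha}$, $\mathcal Z_{30}^{\alpha}$, ${\mathfrak V}_{4+1}$, ${\mathfrak V}_{3+2}$, which will produce the $16$ candidate components of dimension $24$.

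The second step is to construct explicit parametrized bases proving $\mathcal A\to \mathcal B$ for every pair with $\dim\mathfrak{Der}(\mathcal A)<\dim\mathfrak{Der}(\mathcal B)$ for which such a degeneration is plausible. For each of the $16$ candidates $\mathcal C_{i}$ we have to verify that every algebra from our list lies in one of $\overline{\mathcal O(\mathcal C_{i})}$; this is done by exhibiting, for each algebra $\mathcal B$ not already appearing among the candidates, a family of invertible matrices $E_{i}^{t}=\sum_{j}a_{i}^{j}(t)e_{j}$ over $\mathbb C^{*}$ whose limits at $t=0$ reproduce the structure constants of $\mathcal B$ in the structure of some $\mathcal C_{i}$ (or of some $\mathcal Z^{\alpha}$ with $\alpha=\alpha(t)$ a rational function, yielding a parametrized index). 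These degenerations are organised into a Hasse diagram whose maximal elements are exactly the $16$ listed algebras/families.

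The third step is to prove non-degenerations among the $16$ candidate components. The elementary invariants $\dim \mathfrak{Der}$, $\dim(\mathcal A^{2})$, $\dim\operatorname{Ann}(\mathcal A)$ and $\dim \mathcal A^{k}$ will separate most pairs. For the remaining pairs we use the Borel-stable subset criterion recalled in the lemma preceding the theorem: we produce a Borel subgroup $\mathfrak B$ of $\mathrm{GL}(\mathbb V)$ and a $\mathfrak B$-stable Zariski-closed subset $\mathcal R\subset\mathbb L(T)$ (typically cut out by equations on the structure constants of the form ``$A_{p}A_{q}\subset A_{r}$'') which contains a representative of the source family but no representative of the target algebra. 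For each pair of candidate components one verifies that at least one invariant or one such Borel-stable set separates them, hence no candidate lies in the closure of another. Together with the fact that every orbit lies in some $\overline{\mathcal O(\mathcal C_{i})}$, this shows the $\mathcal C_{i}$ are exactly the irreducible components and the variety has dimension $24$.

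The hardest part will be the non-degeneration analysis for the pairs involving the parametrized families $\mathcal Z_{02}^{\alpha}$, $\mathcal Z_{14}^{\alpha}$ and $\mathcal Z_{30}^{\alpha}$ against each other and against the rigid algebras of the same derivation dimension, because for a family the closure $\overline{\{O(\mu(\alpha))\}_{\alpha\in I}}$ can contain degenerate limits $\alpha\to 0,\infty$ that require a careful choice of $\mathfrak B$-stable set. A second delicate point is to verify completeness of the list of candidates: one must check that no further parametric family of dimension $24$ has been missed (e.g. coming from small-parameter specialisations of the families in Theorem A that yield new algebras), which amounts to checking that each specialisation degenerates into one of the $\mathcal C_{i}$ already listed.
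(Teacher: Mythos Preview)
Your overall strategy (classify, compute orbit dimensions, exhibit degenerations, separate candidates by invariants and Borel-stable sets) matches the paper's, but there is a genuine factual error that would derail the argument as written. You assert that ``$24=\max\dim O(\mu)$ is achieved exactly by the eleven rigid algebras and by the generic members of the families \dots\ which will produce the $16$ candidate components of dimension $24$.'' This is false: the sixteen components have \emph{different} dimensions. In the paper's computation one finds, for instance,
\[
\dim\overline{\mathcal O(\mathfrak V_{3+2})}=24,\quad
\dim\mathcal O(\mathcal Z_{22})=22,\quad
\dim\mathcal O([\mathfrak N_1]^2_{08})=\dim\overline{\{\mathcal O(\mathcal Z_{14}^\alpha)\}}=21,
\]
most of the remaining candidates have orbit (or family-closure) dimension $20$, and $\dim\mathcal O(\mathcal Z_{34})=19$. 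Only $\mathfrak V_{3+2}$ attains $24$; that number is the dimension of the variety, not of every component.

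This matters because your non-degeneration step is planned under the tacit assumption that the candidates all sit at the top level, so that separating them pairwise suffices. In reality the bulk of the non-degeneration work is \emph{asymmetric}: one must rule out that the low-dimensional candidates lie in the closure of the higher ones. Concretely, almost every entry in the paper's non-degeneration table is of the form ``$\mathcal A\not\to\mathcal Z_{34}$'' for $\mathcal A$ running over $\mathcal Z_{05},\mathcal Z_{23},\mathcal Z_{24},\mathcal Z_{27},\mathcal Z_{30}^\alpha,\mathcal Z_{35},\mathcal Z_{38},\mathcal Z_{40}$ (all with orbit dimension $\ge 20$), together with $[\mathfrak N_1]^2_{08}\not\to[\mathfrak N_1^{\mathbb C}]^2_{06}$ and the separations of $\mathcal Z_{22}$ and $\mathcal Z_{14}^\alpha$ from $\mathcal Z_{02}^\alpha$ and $\mathfrak V_{4+1}$. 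Your plan should be revised to: (i) compute the actual orbit/family dimensions; (ii) observe that $\dim A^2$ and $\dim\operatorname{Ann}(A)$ already prevent most downward degenerations among the candidates; (iii) focus the Borel-stable-set arguments on the genuinely delicate cases just listed, particularly those targeting $\mathcal Z_{34}$. A minor side remark: the $2$-step nilpotent Zinbiel algebras are not ``commutative associative'' but simply all $2$-step nilpotent algebras (the Zinbiel identity is vacuous there); the relevant input from \cite{ikp20} is the geometric classification of that full variety.
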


\begin{proof}
Thanks to \cite{ikp20}, the variety of $5$-dimensional $2$-step nilpotent algebras has only three irreducible components defined by 

\begin{longtable}{lllllll}
${\mathfrak V}_{4+1}$ & $:$&  
$e_1e_2=e_5$& $e_2e_1=\lambda e_5$ &$e_3e_4=e_5$&$e_4e_3=\mu e_5$\\

${\mathfrak V}_{3+2}$ &$ :$&
$e_1e_1 =  e_4$& $e_1e_2 = \mu_1 e_5$ & $e_1e_3 =\mu_2 e_5$& 
$e_2e_1 = \mu_3 e_5$  & $e_2e_2 = \mu_4 e_5$  \\
& & $e_2e_3 = \mu_5 e_5$  & $e_3e_1 = \mu_6 e_5$  & \multicolumn{2}{l}{$e_3e_2 = \lambda e_4+ \mu_7 e_5$ } & $e_3e_3 =  e_5$  \\

${\mathfrak V}_{2+3}$ &$ :$&
$e_1e_1 = e_3 + \lambda e_5$& $e_1e_2 = e_3$ & $e_2e_1 = e_4$& $e_2e_2 = e_5$

\end{longtable}

Thanks to \cite{kppv,ikp20}, 
all $5$-dimensional split Zinbiel  algebras are in the orbit closure 
of the families ${\mathfrak V}_{4+1}$ and ${\mathfrak V}_{3+2},$
and the algebras
$[\mathfrak{Z}_1]^1_1, \, [\mathfrak{N}_1]^1_{01}, \, [\mathfrak{N}_1^{\mathbb{C}}]^1_{01}.$  

Let us give some useful degenerations for our proof.
  \begin{longtable}{|lcll|}
 
\hline

{$\mathcal{Z}_{04}$}&$\to$&$\mathcal{Z}_{01}$ & 
$E_1^t=e_1+\frac{2(3t^2-1)}{9t^2}e_4$ \\ 
\multicolumn{3}{|l}{$E_2^t=e_2+\frac{2(3t^2-1)}{9t^2}e_5$} & 
$E_3^t=-\frac{1}{3}e_2+\frac{3t^2}{3t^2-1}e_3$ \\ \multicolumn{3}{|l}{$E_4^t=-\frac{1}{9t}e_2+\frac{t}{3t^2-1}e_3+\frac{3t^2-1}{3t}e_4$} & $E_5^t=e_5$\\ \hline

$\mathcal{Z}_{02}^{t-1}$&$\to$&$\mathcal{Z}_{03}$ & 
$E_1^t=e_1$ \\ 
\multicolumn{3}{|l}{$E_2^t=e_2$} & 
$E_3^t=e_3+\frac{1}{t}e_4$\\  
\multicolumn{3}{|l}{$E_4^t=e_4$} & $E_5^t=e_5$\\ \hline

$\mathcal{Z}_{02}^{t+2}$&$\to$&$\mathcal{Z}_{04}$ & 
$E_1^t=e_1-\frac{2}{t}e_3$ \\ 
\multicolumn{3}{|l}{$E_2^t=e_2$} & 
$E_3^t=e_3$\\   
\multicolumn{3}{|l}{$E_4^t=\frac{t+2}{t}e_2+e_4$} & $E_5^t=e_5$\\ \hline

{$\mathcal{Z}_{22}$} &$\to$&$\mathcal{Z}_{07}$ & 
$E_1^t=e_1+\frac{\sqrt{4t^2-5}}{2(t^2-1)}e_3+\frac{2}{\sqrt{4t^2-5}}e_4$ \\ \multicolumn{3}{|l}{$E_2^t=\frac{t}{2(1-t^2)}e_1+\frac{t\sqrt{4t^2-5}}{2(t^2-1)}e_2$}&
$E_3^t=\frac{t\sqrt{4t^2-5}}{2(t^2-1)}e_3-\frac{t}{2(t^2-1)}e_5$\\ 
\multicolumn{3}{|l}{$E_4^t=\frac{2t}{\sqrt{4t^2-5}}e_4$} & $E_5^t=\frac{t}{t^2-1}e_5$\\ \hline

{$\mathcal{Z}_{22}$} &$\to$&$\mathcal{Z}_{08}$ & 
$E_1^t=e_1+\frac{\sqrt{t(4t^2-t-4)}}{2t(t^2-1)}e_3+2\sqrt{\frac{t}{4t^2-t-4}}e_4$ \\ \multicolumn{3}{|l}{$E_2^t=\frac{t}{2(1-t^2)}e_1+\frac{\sqrt{t(4t^2-t-4)}}{2(t^2-1)}e_2$}&
$E_3^t=\frac{\sqrt{t(4t^2-t-4)}}{2(t^2-1)}e_3+\frac{t}{2(1-t^2)}e_5$\\ \multicolumn{3}{|l}{$E_4^t=2t\sqrt{\frac{t}{4t^2-t-4}}e_4$} & $E_5^t=\frac{t}{t^2-1}e_5$\\ \hline

$\mathcal{Z}_{10}^{-\frac{1}{t}}$&$\to$&$\mathcal{Z}_{09}$ & 
$E_1^t=e_1+e_2$ \\
\multicolumn{3}{|l}{$E_2^t=te_2$} & $E_3^t=te_3$\\ 
\multicolumn{3}{|l}{$E_4^t=e_3+te_4$} & $E_5^t=te_5$\\ \hline

{$\mathcal{Z}_{14}^{\frac{t^2-\alpha}{(\alpha-1)^2}}$}&$\to$&$\mathcal{Z}_{10}^\alpha$ & 
$E_1^t=\frac{(\alpha-1)^2}{t}e_1+\frac{\alpha-1}{t}e_4$ \\ 
\multicolumn{3}{|l}{$E_2^t=e_2$} &
$E_3^t=\frac{(\alpha-1)^2}{t}e_3$\\
\multicolumn{3}{|l}{$E_4^t=(\alpha-1)e_4$} & $E_5^t=\frac{(\alpha-1)^2}{t}e_5$\\ \hline

$\mathcal{Z}_{14}^{\frac{t+1}{4}}$&$\to$&$\mathcal{Z}_{11}$ & 
$E_1^t=\frac{4}{t}e_1-\frac{2}{t}e_4$ \\ 
\multicolumn{3}{|l}{$E_2^t=e_2$} &
$E_3^t=\frac{4}{t}e_3$\\ 
\multicolumn{3}{|l}{$E_4^t=-2e_4$} & $E_5^t=\frac{4}{t}e_5$\\ \hline

{$\mathcal{Z}_{22}$}&$\to$&$\mathcal{Z}_{17}$ & 
$E_1^t=\frac{1}{t}e_1$ \\ 
\multicolumn{3}{|l}{$E_2^t=\frac{1}{2t^2}e_1+\frac{\sqrt{4t^2-1}}{2t^2}e_2$} & $E_3^t=\frac{\sqrt{4t^2-1}}{2t^3}e_3-\frac{1}{2t^3}e_5$\\
\multicolumn{3}{|l}{$E_4^t=-2\sqrt{4t^2-1}e_4$} & $E_5^t=\frac{1}{t^3}e_5$\\ \hline

$\mathcal{Z}_{22}$&$\to$&$\mathcal{Z}_{20}$ & 
$E_1^t=e_1$ \\ 
\multicolumn{3}{|l}{$E_2^t=\frac{1}{2t}e_1+\frac{\sqrt{4t^2-1}}{2t}e_2$} &
$E_3^t=\frac{\sqrt{4t^2-1}}{2t}e_3-\frac{1}{2t}e_5$ \\ 
\multicolumn{3}{|l}{$E_4^t=\frac{2}{\sqrt{4t^2-1}}e_4$} & $E_5^t=\frac{1}{t}e_5$\\ \hline

{$\mathcal{Z}_{24}$}&$\to$&$\mathcal{Z}_{25}$ & 
$E_1^t=-(t+1)e_2-\frac{(t+1)(t+2)}{t}e_3-\frac{(t+1)}{t}e_4$\\
\multicolumn{3}{|l}{$E_2^t=-te_1+e_2$} &
$E_3^t=-t^2(t+1)e_3-(t+1)e_4+\frac{(t+1)^2}{t}e_5$\\ 
\multicolumn{3}{|l}{$E_4^t=-t(t+1)e_3+(t+1)e_4$} & $E_5^t=-(t+1)^2e_5$  \\\hline

$\mathcal{Z}_{27}$&$\to$&$\mathcal{Z}_{28}$ & 
$E_1^t=e_2$ \\
\multicolumn{3}{|l}{$E_2^t=te_1+e_3$} & $E_3^t=te_4-e_5$\\ 
\multicolumn{3}{|l}{$E_4^t=te_3$} & $E_5^t=-te_5$\\ \hline

{$\mathcal{Z}_{30}^{t}$}&$\to$&$\mathcal{Z}_{31}$ & 
$E_1^t=-\frac{2t^2}{t+1}e_1+e_4$ \\ 
\multicolumn{3}{|l}{$E_2^t=e_2$} & $E_3^t=e_3$\\ 
\multicolumn{3}{|l}{$E_4^t=-\frac{2t^2}{t+1}e_4+2(t+1)e_5$} & $E_5^t=-2t^2e_5$\\ \hline

$\mathcal{Z}_{35}$&$\to$&$\mathcal{Z}_{36}$ & 
$E_1^t=\sqrt{t}e_1-\frac{1}{3t}e_3+\frac{1}{\sqrt{t}}e_4$ \\ 
\multicolumn{3}{|l}{$E_2^t=\frac{1}{\sqrt{t}}e_1+e_2$} & 
$E_3^t=e_3$ \\
\multicolumn{3}{|l}{$E_4^t=\sqrt{t}e_4-\frac{2}{3t}e_5$} & $E_5^t=e_5$\\ \hline

$\mathcal{Z}_{35}$&$\to$&$\mathcal{Z}_{37}$ & 
$E_1^t=\sqrt{t}e_1$ \\ 
\multicolumn{3}{|l}{$E_2^t=\frac{1}{\sqrt{t}}e_1+e_2$} & $E_3^t=e_3$\\ 
\multicolumn{3}{|l}{$E_4^t=\sqrt{t}e_4$} & $E_5^t=e_5$\\ \hline

$\mathcal{Z}_{38}$&$\to$&$\mathcal{Z}_{39}$ & 
$E_1^t=e_1+\frac{3}{2\sqrt{t}}e_4$ \\ 
\multicolumn{3}{|l}{$E_2^t=e_2+\frac{9}{4t}e_5$} & 
$E_3^t=e_3$\\ 
\multicolumn{3}{|l}{$E_4^t=-\frac{1}{2}e_3+\sqrt{t}e_4$} & $E_5^t=e_5$\\ \hline

{$[\mathfrak{N}_1^{\mathbb{C}}]^2_{06}$} & $\to$ & $[\mathfrak{N}_1^{\mathbb{C}}]^{2,\alpha}_{02}$ & 
$E_1^t=\frac{1}{\alpha-1}e_1+\frac{1}{(\alpha-1)^2}e_3$ \\ \multicolumn{3}{|l}{$E_2^t=\frac{1}{(\alpha-1)^2}e_2+\frac{1}{(\alpha-1)^3}e_4+\frac{\alpha}{(\alpha-1)^4}e_5$} & $E_3^t=\frac{t}{(\alpha-1)^2}e_3$ \\
\multicolumn{3}{|l}{$E_4^t=\frac{1}{(\alpha-1)^3}e_4$} & $E_5^t=\frac{t}{(\alpha-1)^4}e_5$\\ \hline

$[\mathfrak{N}_1^{\mathbb{C}}]^{2,\frac{1}{2}}_{09}$&$\to$&$[\mathfrak{N}_1^{\mathbb{C}}]^2_{07}$ & 
$E_1^t=te_1-te_3$ \\ 
\multicolumn{3}{|l}{$E_2^t=t^2e_2+t^2e_4-\frac{3t^2}{2}e_5$} &
$E_3^t=t^2e_2+t^2e_3$ \\ 
\multicolumn{3}{|l}{$E_4^t=t^3e_4$} & $E_5^t=\frac{t^{3}}{2}e_5$\\ \hline

{$[\mathfrak{N}_1^{\mathbb{C}}]^2_{06}$} &$\to$& $[\mathfrak{N}_1^{\mathbb{C}}]^{2,\alpha}_{09}$ & $E_1^t=-\frac{2t^2}{(\alpha-1)(2\alpha-2t^2-1)}\left(e_1+\frac{1}{\alpha-1}e_3\right)$   \\ 

\multicolumn{4}{|l|}{$E_2^t=\frac{4t^4}{(\alpha-1)^2(2\alpha-2t^2-1)^{2}}\left(e_2+\frac{1}{\alpha-1}e_4+\frac{\alpha}{(\alpha-1)^2}e_5\right)$} \\ 
\multicolumn{4}{|l|}{$E_3^t=-\frac{2t^3}{(\alpha-1)^2(2\alpha-2t^2-1)}\left(\frac{1}{2\alpha-2t^2-1}e_2+e_3\right)$} \\
\multicolumn{3}{|l}{$E_4^t=-\frac{8t^6}{(\alpha-1)^3(2\alpha-2t^2-1)^{3}}e_4$}  &
 $E_5^t=\frac{4t^5}{(\alpha-1)^3(2\alpha-2t^2-1)^{2}}\left(\frac{2}{2\alpha-2t^2-1} e_4+\frac{1}{\alpha-1}e_5\right)$ \\ \hline

$[\mathfrak{N}_1]^2_{02}$&$\to$&$[\mathfrak{N}_1]^2_{01}$ & $E_1^t=e_1+e_2$ \\
\multicolumn{3}{|l}{$E_2^t=te_2$} &  $E_3^t=te_3$\\ 
\multicolumn{3}{|l}{$E_4^t=e_4$} & $E_5^t=te_5$\\ \hline

$[\mathfrak{N}_1]^2_{07}$&$\to$&$[\mathfrak{N}_1]^2_{06}$ & 
$E_1^t=e_1+\frac{1}{t^2}e_2$ \\ 
\multicolumn{3}{|l}{$E_2^t=\frac{1}{t}e_2$} & 
$E_3^t=\frac{1}{t}e_3$\\ 
\multicolumn{3}{|l}{$E_4^t=\frac{1}{t^2}e_4$} & $E_5^t=\frac{1}{t^3}e_4+\frac{1}{t}e_5$\\ \hline

{$[\mathfrak{N}_1]^2_{08}$}&$\to$&$[\mathfrak{N}_1]^2_{07}$ & 
$E_1^t=\frac{1}{t}e_1+\frac{1}{t\sqrt{4t^2-1}}e_2$ \\ 
\multicolumn{3}{|l}{$E_2^t=\frac{2}{\sqrt{4t^2-1}}e_2$}
 & $E_3^t=\frac{2}{t\sqrt{4t^2-1}}e_3-\frac{2}{t(4t^2-1)}e_4$ \\ 
 \multicolumn{3}{|l}{$E_4^t=\frac{4}{t(4t^2-1)}e_4$} & $E_5^t=\frac{2}{t^2(4t^2-1)}e_4+\frac{2}{t^2\sqrt{4t^2-1}}e_5$\\ \hline

$[\mathfrak{N}_1]^2_{10}$&$\to$&$[\mathfrak{N}_1]^2_{09}$ & 
$E_1^t=e_1+\frac{1}{t}e_2$\\ 
\multicolumn{3}{|l}{$E_2^t=e_2-\frac{1}{t}e_3$} & 
$E_3^t=e_3-\frac{1}{t}e_5$\\  
\multicolumn{3}{|l}{$E_4^t=\frac{1}{t}e_4+\frac{1}{t^2}e_5$} & $E_5^t=e_5$\\ \hline

{$[\mathfrak{N}_1]^2_{08}$}&$\to$&$[\mathfrak{N}_1]^2_{10}$ & 
$E_1^t=te_2-te_3$ \\ 
\multicolumn{3}{|l}{$E_2^t=-t^2e_1$} &
$E_3^t=t^3e_3-t^3e_5$\\ 
\multicolumn{3}{|l}{$E_4^t=t^3e_5$} & $E_5^t=t^4e_4$\\ \hline

{$[\mathfrak{N}_1]^2_{08}$}&$\to$&$[\mathfrak{N}_1]^1_{01}$ & 
$E_1^t=te_1$ \\
\multicolumn{3}{|l}{$E_2^t=\sqrt{t}e_2$} &
$E_3^t=\sqrt{t^3}e_3$\\  
\multicolumn{3}{|l}{$E_4^t=te_4$} & $E_5^t=-e_4+\sqrt{t^3}e_5$\\ \hline

{$[\mathfrak{N}_1^\mathbb{C}]^2_{06}$}&$\to$&$[\mathfrak{N}_1^\mathbb{C}]^1_{01}$ & 
$E_1^t=\frac{1}{t-1}e_1$ \\ 
\multicolumn{3}{|l}{$E_2^t=\frac{1}{(t-1)^2}e_2$} &$E_3^t=\frac{1}{t-1}e_3$\\ \multicolumn{3}{|l}{$E_4^t=\frac{1}{(t-1)^3}e_4$} & $E_5^t=-\frac{1}{t(t-1)^3}e_4+\frac{1}{t(t-1)^2}e_5$\\ \hline

{$\mathcal{Z}_{40}$}&$\to$&$\mathfrak{V}_{2+3}$ & 
$E_1^t=e_1-\frac{1}{t}e_2+\frac{3(2+9\lambda)}{8t^2}e_3-\frac{6(2+9\lambda)}{5t^3}e_4$ \\ 
\multicolumn{3}{|l}{$E_2^t=-\frac{3}{t}e_2$} &$E_3^t=-\frac{3}{2t}e_3+\frac{9}{t^2}e_4-\frac{27(2+9\lambda)}{2t^3}e_5$\\ \multicolumn{3}{|l}{$E_4^t=-\frac{3}{t}e_3+\frac{9}{t^2}e_4-\frac{9(2+9\lambda)}{t^3}e_5$} & $E_5^t=\frac{27}{t^2}e_4$\\ \hline
\end{longtable}


For the rest of degenerations, in  case of  $E_1^t,\dots, E_n^t$ is a {\it parametric basis} for ${\bf A}\to {\bf B},$ it will be denote as
${\bf A}\xrightarrow{(E_1^t,\dots, E_n^t)} {\bf B}$.

\begin{longtable}{|lcl|lcl|}
  
\hline

$\mathcal{Z}_{09}$ & $  \xrightarrow{ (te_1,e_2,te_3,t^2e_4,t^2e_5)}$ & $\mathcal{Z}_{06}$ &

$\mathcal{Z}_{13}$ & $  \xrightarrow{ (te_1,e_2,te_3,t^{\frac{1}{2}}e_4,te_5)}$ & $\mathcal{Z}_{12}$ \\ \hline 

$\mathcal{Z}_{14}^{\frac{1}{t^2}}$ & $  \xrightarrow{ (t^2e_1,e_2,t^2e_3,te_4,t^2e_5)}$ & $\mathcal{Z}_{13}$ &

$\mathcal{Z}_{16}$ & $  \xrightarrow{ (te_1,e_2,te_3,e_4,te_5)}$ & $\mathcal{Z}_{15}$ \\ \hline 

{$\mathcal{Z}_{22}$} & $  \xrightarrow{ (t^{-1}e_1,t^{-\frac{1}{2}}e_2,t^{-\frac{3}{2}}e_3,t^{-\frac{1}{2}}e_4,t^{-2}e_5)}$ & $\mathcal{Z}_{16}$ & 

{$\mathcal{Z}_{22}$} & $  \xrightarrow{ (e_1,e_2,e_3,t^{-1}e_4,t^{-1}e_5)}$ & $\mathcal{Z}_{18}$ \\ \hline

{$\mathcal{Z}_{22}$} & $  \xrightarrow{ (e_1,te_2,te_3,t^{-1}e_4,e_5)}$ & $\mathcal{Z}_{19}$ &

{$\mathcal{Z}_{22}$} & $  \xrightarrow{ (e_1,t^{-\frac{1}{2}}e_2,t^{-\frac{1}{2}}e_3,t^{-\frac{1}{2}}e_4,t^{-1}e_5)}$ & $\mathcal{Z}_{21}$ \\ \hline

$\mathcal{Z}_{25}$ & $  \xrightarrow{ (t^{-1}e_1,t^{-1}e_2,t^{-2}e_3,t^{-2}e_4,t^{-3}e_5)}$ & $\mathcal{Z}_{26}$ &

$\mathcal{Z}_{28}$ & $  \xrightarrow{ (e_1,te_2,te_3,te_4,te_5)}$ & $\mathcal{Z}_{29}$ \\ \hline
 
$\mathcal{Z}_{34}$ & $  \xrightarrow{ (t^{-2}e_1,t^{-1}e_2,t^{-2}e_3,t^{-3}e_4,t^{-4}e_5)}$ & $\mathcal{Z}_{32}$ &

$\mathcal{Z}_{34}$ & $  \xrightarrow{ (t^{-1}e_1,t^{-1}e_2,t^{-2}e_3,t^{-2}e_4,t^{-3}e_5)}$ & $\mathcal{Z}_{33}$ \\ \hline

$[\mathfrak{N}_1^{\mathbb{C}}]^{2,\frac{1}{t}}_{02}$ & $  \xrightarrow{ (te_1,t^2e_2,e_3,t^3e_4,e_5)}$ & $[\mathfrak{N}_1^{\mathbb{C}}]^2_{01}$ &

$[\mathfrak{N}_1^{\mathbb{C}}]^2_{06}$ & $  \xrightarrow{ (t^{-1}e_1,t^{-2}e_2,t^{-1}e_3,t^{-3}e_4,t^{-2}e_5)} $ &   
$[\mathfrak{N}_1^{\mathbb{C}}]^2_{03}$\\ \hline

$[\mathfrak{N}_1^{\mathbb{C}}]^2_{05}$ & $  \xrightarrow{ (e_1,e_2,te_3,e_4,t^2e_5)} $ &   $[\mathfrak{N}_1^{\mathbb{C}}]^2_{04}$ &

$[\mathfrak{N}_1^{\mathbb{C}}]^2_{06}$ & $  \xrightarrow{ (t^{-1}e_1,t^{-2}e_2,t^{-2}e_3,t^{-3}e_4,t^{-4}e_5)} $ &   $[\mathfrak{N}_1^{\mathbb{C}}]^2_{05}$ \\ \hline

$[\mathfrak{N}_1^{\mathbb{C}}]^{2,\frac{1}{t}}_{09}$ & $  \xrightarrow{ (t^{\frac{2}{3}}e_1,t^{\frac{4}{3}}e_2,te_3,t^2e_4,t^{\frac{2}{3}}e_5)} $ &   $[\mathfrak{N}_1^{\mathbb{C}}]^2_{08}$ &

$[\mathfrak{N}_1]^2_{07}$ & $  \xrightarrow{ (e_1,te_2,te_3,te_4,te_5)} $ &   $[\mathfrak{N}_1]^2_{02}$ \\ \hline

$[\mathfrak{N}_1]^2_{04}$ & $  \xrightarrow{ (t^{\frac{1}{2}}e_1,e_2,t^{\frac{1}{2}}e_3,e_4,te_5)} $ &   $[\mathfrak{N}_1]^2_{03}$ &

$[\mathfrak{N}_1]^2_{08}$ & $  \xrightarrow{ (te_1,te_2,t^2e_3,t^2e_4,t^3e_5)} $ &   $[\mathfrak{N}_1]^2_{04}$ \\ \hline

$[\mathfrak{N}_1]^2_{06}$ & $  \xrightarrow{ (te_1,e_2,te_3,te_4,t^2e_5)} $ &   $[\mathfrak{N}_1]^2_{05}$ &

{$\mathcal{Z}_{40}$} & $  \xrightarrow{ (e_1,e_2,e_3,e_4,t^{-1}e_5)} $ &   $[\mathfrak{Z}_1]^1_{1}$ \\ \hline

\end{longtable} 

The following dimensions of orbit closures are important for us: 

\begin{longtable}{llll}  
$\dim  \mathcal{O}({\mathfrak V}_{3+2})=24,$ & 
$\dim \mathcal{O}(\mathcal{Z}_{22})=22,$ &
$\dim \mathcal{O}(\mathcal{Z}_{14}^\alpha)=21,$ &
$\dim \mathcal{O}([\mathfrak{N}_1]^{2}_{08})=21,$ \\
$\dim \mathcal{O}({\mathfrak V}_{4+1})=20,$ &
$\dim \mathcal{O}(\mathcal{Z}_{02}^\alpha)=20,$ &
$\dim \mathcal{O}(\mathcal{Z}_{30}^{\alpha})=20,$ &
$\dim \mathcal{O}(\mathcal{Z}_{05})=20,$ \\
$\dim \mathcal{O}(\mathcal{Z}_{23})=20,$ &
$\dim \mathcal{O}(\mathcal{Z}_{24})=20,$ &
$\dim \mathcal{O}(\mathcal{Z}_{27})=20,$ &
$\dim \mathcal{O}(\mathcal{Z}_{35})=20,$ \\
$\dim \mathcal{O}(\mathcal{Z}_{38})=20,$ &
$\dim \mathcal{O}(\mathcal{Z}_{40})=20,$ &
$\dim \mathcal{O}([\mathfrak{N}_1^{\mathbb{C}}]^{2}_{06})=20,$ &
$\dim \mathcal{O}(\mathcal{Z}_{34})=19.$ \,

 \end{longtable}
Let us, also, give the list of dimensions of the square of principal algebras:

\begin{longtable}{|l|l|}
\hline
 {\rm dim} $A^2=2$ &  $  \mathcal{Z}_{14}^\alpha, \, \mathcal{Z}_{02}^\alpha, \, \mathcal{Z}_{22}, {\mathfrak V}_{3+2}$\\ 
\hline {\rm dim} $A^2=3$ & $[\mathfrak{N}_1]^{2}_{08}, \,
[\mathfrak{N}^{\mathbb{C}}_1]^{2}_{06}, \,
 \mathcal{Z}_{05}, \, \mathcal{Z}_{23}, \, \mathcal{Z}_{24}, \,  \mathcal{Z}_{27}, \,    \mathcal{Z}_{30}^{\alpha}, \,  \mathcal{Z}_{34}, \, \mathcal{Z}_{35}, \, \mathcal{Z}_{38}$ \\
\hline {\rm dim} $A^2=4$  & $\mathcal{Z}_{40}$\\
\hline
\end{longtable}

Hence,  and taking into account the annihilator dimension of this algebras (see item $(2)$ in the beginning of Section \ref{secteoA}),  
$[\mathfrak{N}_1]^{2}_{08}, \, \mathcal{Z}_{05}, \, \mathcal{Z}_{22}, \, \mathcal{Z}_{23}, \, \mathcal{Z}_{24}, \,  \mathcal{Z}_{27}, \,    \mathcal{Z}_{30}^{\alpha}, \,  \mathcal{Z}_{35}, \, \mathcal{Z}_{38}, \, \mathcal{Z}_{40}$ and ${\mathfrak V}_{3+2}$ give $11$ irreducible components.
Below we have listed all necessary reasons for   non-degenerations, 
which imply that 
$[\mathfrak{N}^{\mathbb{C}}_1]^{2}_{06},
\mathcal{Z}_{02}^{\alpha}, \mathcal{Z}_{14}^{\alpha}, \mathcal{Z}_{34}$ and ${\mathfrak V}_{4+1}$ give another $5$ irreducible components.

\begin{longtable}{|rcl|l|}
\hline
\multicolumn{4}{|c|}{\textrm{
{\bf  Non-degenerations reasons}}
}  \\
\hline

$[\mathfrak{N}_1]^{2}_{08}$  &$\not \to$& 
$  \begin{array}{l}
[\mathfrak{N}^{\mathbb{C}}_1]^{2}_{06}
\end{array} $
& ${\mathcal R}=
\left\{
\begin{array}{l}
 A_1^2 \subseteq A_3, \, A_1A_4+A_4A_1=0, \\ 
 c_{11}^3=0, \, c_{22}^3=0, \, c_{12}^3=-c_{21}^3

\end{array}
\right\}$
 \\
\hline



$\mathcal{Z}_{05}$  &$\not \to$& 
$  \begin{array}{l}

\mathcal{Z}_{34}
\end{array} $
& ${\mathcal R}=
\left\{
\begin{array}{l}
 
 A_1^2 \subseteq A_3, \, A_1A_2+A_2A_1 \subseteq A_4, \, A_4A_3+A_3A_4=0,\\
 c_{12}^4=c_{21}^4, \, c_{12}^3=c_{21}^3=c_{22}^3=0,\, 
2c_{24}^5 = c_{42}^5, \, 2c_{14}^5 =  c_{41}^5

\end{array}
\right\}$
 \\
\hline

$\mathcal{Z}_{14}^{\alpha}$  &$\not \to$& 
$  \begin{array}{l}
\mathcal{Z}_{02}^{\alpha}, \\
{\mathfrak V}_{4+1}
\end{array} $
& ${\mathcal R}=
\left\{
\begin{array}{l}
A_1^2 \subseteq A_4, \, A_4A_1=0\\

\mbox{new base for }\mathcal{Z}_{14}^{\alpha}:
f_1=e_1, f_2=e_2, f_3=e_4, f_4=e_3, f_5=e_5 \\

\end{array}
\right\}$
 \\
\hline

$\mathcal{Z}_{22}$  &$\not \to$& 
$  \begin{array}{l}
\mathcal{Z}_{02}^{\alpha}, \\
\mathcal{Z}_{14}^{\alpha}, \\ 
{\mathfrak V}_{4+1}
\end{array} $
& ${\mathcal R}=
\left\{
\begin{array}{l}

A_1^2 \subseteq A_4, \,
A_1A_3+A_3A_1+A_2^2 \subseteq A_5,\,
A_4A_1+A_1A_5=0, \\
\mbox{new base for }\mathcal{Z}_{22}:
f_1=e_1, f_2=e_2, f_3=e_4, f_4=e_3, f_5=e_5 \\ 
\end{array}
\right\}$
 \\
\hline

$\mathcal{Z}_{23}$  &$\not \to$& 
$  \begin{array}{l}

\mathcal{Z}_{34}
\end{array} $
& ${\mathcal R}=
\left\{
\begin{array}{l}
 A_1^2 \subseteq A_3, \, 
 A_1A_3+A_3A_1 \subseteq A_5, \,
 A_1A_5+A_4A_1=0

\end{array}
\right\}$
 \\
\hline

$\mathcal{Z}_{24}$  &$\not \to$& 
$  \begin{array}{l}
    
\mathcal{Z}_{34}
\end{array} $
& ${\mathcal R}=
\left\{

\begin{array}{l}
A_1^2 \subseteq A_3, \, 
A_1A_2 \subseteq A_4, \\ 
A_1A_3+A_3A_1 \subseteq A_5, \,
A_1A_5+A_5A_1=0,\\

c_{22}^4 c_{11}^3=  c_{12}^4 c_{21}^3, \, 
c_{23}^5 c_{11}^3= c_{13}^5 c_{21}^3, \,
2 c_{22}^4 c_{14}^5=  c_{12}^4 c_{42}^5, \\
c_{21}^4 c_{14}^5 +  c_{12}^4 c_{14}^5 + c_{13}^5 c_{21}^3 =     c_{12}^4 c_{41}^5,\\
c_{11}^4 c_{14}^5 c_{21}^3 +  c_{11}^3 (c_{12}^4 c_{14}^5 + c_{13}^5 c_{21}^3) =c_{11}^3 c_{12}^4 c_{41}^5

\end{array}
\right\}$
 \\
\hline

$\mathcal{Z}_{27}$  &$\not \to$& 
$  \begin{array}{l}

\mathcal{Z}_{34}
\end{array} $
& ${\mathcal R}=
\left\{
A_1^2 \subseteq A_3, \,  A_3A_1=0
\begin{array}{l}

\end{array}
\right\}$
 \\

\hline

$\mathcal{Z}_{30}^{\alpha}$  &$\not \to$& 
$  \begin{array}{l}

\mathcal{Z}_{34}
\end{array} $
& ${\mathcal R}=
\left\{
\begin{array}{l}
 A_1^2 \subseteq A_3, \, A_1A_3+A_3A_1 \subseteq A_5, \, A_1A_5+A_5A_1=0,\\

c_{12}^3 = c_{21}^3, \, 
c_{23}^5 c_{42}^5=  c_{24}^5 c_{32}^5, \\


c_{31}^5 c_{24}^5 (c_{12}^4 (c_{14}^5)^2 + 3 c_{11}^3 c_{14}^5 c_{23}^5 -  2 c_{11}^3 c_{13}^5 c_{24}^5) =\\ 
\multicolumn{1}{r}{ 2c_{14}^5 (c_{12}^4 c_{14}^5 + 2 c_{11}^3 c_{23}^5) (c_{14}^5 c_{23}^5 - c_{13}^5 c_{24}^5)}  


\end{array}
\right\}$
 \\
\hline

$\mathcal{Z}_{35}$  &$\not \to$& 
$  \begin{array}{l}

\mathcal{Z}_{34}
\end{array} $
& ${\mathcal R}=
\left\{
\begin{array}{l}
 A_1^2 \subseteq A_3, \, A_4A_1+A_3^2+A_1A_5=0, \\

c_{11}^3 (c_{12}^4  +  c_{21}^4)=2 c_{11}^4 c_{12}^3,\,
c_{22}^3(c_{12}^4  + c_{21}^4)= 2 c_{21}^3 c_{22}^4

\end{array}
\right\}$
 \\
\hline

$\mathcal{Z}_{38}$  &$\not \to$& 
$  \begin{array}{l}

\mathcal{Z}_{34}
\end{array} $
& ${\mathcal R}=
\left\{
\begin{array}{l}
 A_1^2 \subseteq A_3, \, A_1A_2+A_2A_1 \subseteq A_4,\\ A_2^2+A_1A_4+A_4A_1\subseteq A_5, \, A_1A_5+A_5A_1=0,\\
  c_{14}^5 c_{11}^4 +c_{11}^3   c_{31}^5 = 2c_{11}^3  c_{13}^5, \,
 2c_{12}^4= c_{21}^4\\

\mbox{new base for }\mathcal{Z}_{38}:
f_1=e_1, f_2=e_4, f_3=e_2, f_4=e_3, f_5=e_5 \\
\end{array}
\right\}$
 \\
\hline

$\mathcal{Z}_{40}$  &$\not \to$& 
$  \begin{array}{l}

\mathcal{Z}_{34}
\end{array} $
& ${\mathcal R}=
\left\{
\begin{array}{l}
 
 A_1^2 \subseteq A_2, \, A_1A_3+A_3A_1+A_2^2 \subseteq A_4, \\ 
 A_2A_3+A_3A_2 \subseteq A_5, \, A_5A_1+A_1A_5=0,\\

4 c_{14}^5 = c_{41}^5, \, 
3c_{23}^5 = 2 c_{32}^5, \,
3c_{13}^4= c_{31}^4, \,
c_{21}^3 = 2 c_{12}^3

\end{array}
\right\}$
 \\
\hline



\end{longtable}

\end{proof}

\end{document}